\newtheorem{theorem}{Theorem}[section]
\newtheorem{lemma}[theorem]{Lemma}
\newtheorem{proposition}[theorem]{Proposition}
\numberwithin{equation}{section}  
\begin{document}
\title[Gradient estimate on K\"{a}hler manifolds]{Gradient estimate for
harmonic functions on K\"{a}hler manifolds}
\author{Ovidiu Munteanu}
\email{ovidiu.munteanu@uconn.edu}
\address{Department of Mathematics, University of Connecticut, Storrs, CT
06268, USA}
\author{Lihan Wang}
\email{lihan.wang@uconn.edu}
\address{Department of Mathematics, University of Connecticut, Storrs, CT
06268, USA}
\thanks{2010 Mathematics Subject Classification. Primary 53C21; Secondary 58J50\\
	The first author was partially supported by NSF grant DMS-811845.}
\maketitle

\begin{abstract}
We prove a sharp integral gradient estimate for harmonic functions on
noncompact K\"{a}hler manifolds. As application, we obtain a sharp estimate
for the bottom of spectrum of the $p$-Laplacian and prove a splitting
theorem for manifolds achieving this estimate.
\end{abstract}

\section{Introduction}

This paper studies harmonic functions and spectral information of complete
noncompact manifolds. On a Riemannian manifold $\left( M^{n},g\right) $ the
Laplace operator $\Delta $ acting on functions is essentially self adjoint
and has its $L^{2}$ spectrum contained in $\left[ 0,\infty \right) $.
Properties of harmonic functions are well understood for manifolds with
Ricci curvature bounded from below. If the Ricci curvature is non-negative,
Yau's Liouville theorem \cite{Y} proves that there are no positive harmonic
functions on $M$. Furthermore, there are important works concerning the
space of polynomially growing harmonic functions, for example \cite{CM, L}.

On the other hand, when the Ricci curvature has lower bound $\mathrm{Ric}%
\geq -\left( n-1\right) K$, for some $K>0$, then there may exist positive
harmonic functions. In this case, Yau's gradient estimate asserts that 
\begin{equation}
\left\vert \nabla \ln u\right\vert ^{2}\leq \left( n-1\right) ^{2}K,
\label{Y}
\end{equation}%
for any positive harmonic function $u$ on $M$. This estimate is sharp, as it
can be seen for example on the hyperbolic space $\mathbb{H}^{n}$.

Assume now that $\left( M^{m},g\right) $ is K\"{a}hler, where $m$ is the
complex dimension. On $M$ we consider the Riemannian metric 
\begin{equation*}
ds^{2}:=\mathrm{Re}\left( g_{\alpha \bar{\beta}}dz^{\alpha }d\bar{z}^{\beta
}\right) .
\end{equation*}%
If $\left\{ e_{k}\right\} _{k=1,2m}$ is an orthonormal frame in this metric,
so that $e_{2k}=Je_{2k-1}$, then 
\begin{equation*}
\nu _{\alpha }=\frac{1}{2}\left( e_{2\alpha -1}-Je_{2\alpha }\right)
\end{equation*}%
is a unitary frame, where $\alpha =1,2,..,m$. Assume the Ricci curvature of
this Riemannian metric is bounded below by $\mathrm{Ric}\geq -2\left(
m+1\right) ,$ or equivalently that $R_{\alpha \bar{\beta}}\geq -\left(
m+1\right) \delta _{\alpha \bar{\beta}}$ in the unitary frame $\left\{ \nu
_{\alpha }\right\} _{\alpha =1,m}.$ Then Yau's gradient estimate (\ref{Y})
implies%
\begin{equation}
\left\vert \nabla \ln u\right\vert ^{2}\leq 4m^{2}+2m-2,  \label{YK}
\end{equation}%
for any positive harmonic function $u$ on $M$. For $m\geq 2$ the estimate (%
\ref{YK}) is no longer sharp in the class of complete K\"{a}hler manifolds
with $\mathrm{Ric}\geq -2\left( m+1\right) $. In fact, G. Liu proved \cite%
{Liu} that there exists a constant $\varepsilon \left( m\right) >0$ so that%
\begin{equation*}
\left\vert \nabla \ln u\right\vert ^{2}\leq 4m^{2}+2m-2-\varepsilon \left(
m\right) ,
\end{equation*}%
for any $u>0$ harmonic. In view of known examples, it is an interesting question whether the improved gradient estimate 
\begin{equation}
\left\vert \nabla \ln u\right\vert ^{2}\leq 4m^{2}  \label{Sharp}
\end{equation}%
holds for any positive harmonic function $u$ on a K\"{a}hler manifold with $%
\mathrm{Ric}\geq -2\left( m+1\right)$. There exist positive harmonic functions on  the complex
hyperbolic space $\mathbb{CH}^{m}$ for which equality holds.

In this paper we have established some sharp integral gradient estimates
which present supporting evidence for (\ref{Sharp}).

\begin{theorem}
\label{G}Let $\left( M,g\right) $ be a complete K\"{a}hler manifold of
complex dimension $m,$ with $\mathrm{Ric}\geq -2\left( m+1\right) $. Then
any positive harmonic function $u$ satisfies the integral gradient estimate 
\begin{equation}
\int_{M}u\left\vert \nabla \ln u\right\vert ^{p}\phi ^{2}\leq \left( \left(
2m\right) ^{p}+\epsilon \right) \int_{M}u\phi ^{2}+\frac{c\left( m\right) }{%
\epsilon }\int_{M}u\left\vert \nabla \phi \right\vert ^{2},  \label{p}
\end{equation}%
for any $p\leq 2\left( m+2\right) $, any $\epsilon >0$, and any $\phi \geq 0$
with compact support in $M$.
\end{theorem}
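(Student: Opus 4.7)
The plan is to combine the Bochner formula for the logarithm of a positive harmonic function with a Kähler-refined pointwise Hessian bound, and then to carry out an iterated integration by parts against the weight $uw^{k}\phi^{2}$. First I would set $f=\ln u$ and $w=|\nabla f|^{2}$. Since $u$ is harmonic, $\Delta f=-w$, and the Bochner formula reduces to
\[
\tfrac{1}{2}\Delta w \;=\; |\nabla^{2}f|^{2}+\mathrm{Ric}(\nabla f,\nabla f)-\langle\nabla f,\nabla w\rangle,
\]
where the curvature hypothesis replaces the Ricci term by $\geq -2(m+1)w$.

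The key Kähler input is a pointwise refinement of $|\nabla^{2}f|^{2}$. Decomposing the real Hessian in the unitary frame $\{\nu_{\alpha}\}$ into its Hermitian part $(f_{\alpha\bar\beta})$ and its complex symmetric part $(f_{\alpha\beta})$, and using the harmonicity relation $\sum_{\alpha}f_{\alpha\bar\alpha}=-w/2$, Cauchy--Schwarz on the Hermitian piece gives $2\sum|f_{\alpha\bar\beta}|^{2}\geq w^{2}/(2m)$. Combining this with a Kato-type estimate that reads off $|\nabla w|^{2}$ from the $(2,0)$ piece should yield a pointwise inequality of the form
\[
|\nabla^{2}f|^{2}\;\geq\;\tfrac{1}{2m}\,w^{2}\;+\;\kappa\,\tfrac{|\nabla w|^{2}}{w}\;+\;\cdots,
\]
with constants calibrated so that the integrated version eventually produces the sharp leading factor $(2m)^{p}$ rather than the generic Riemannian bound. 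This refined Hessian estimate is the genuinely new ingredient that the Kähler structure provides.

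Next I would multiply the resulting differential inequality by $uw^{k}\phi^{2}$ with $k=p/2-1$ and integrate by parts. Using $\nabla u=u\nabla f$ together with $\Delta u=0$, both the $\int uw^{k}\phi^{2}\Delta w$ and the $\int uw^{k}\phi^{2}\langle\nabla f,\nabla w\rangle$ terms can be reassembled into a positive $\int uw^{k+2}\phi^{2}$ contribution, a positive $\int uw^{k-1}|\nabla w|^{2}\phi^{2}$ contribution, and cross products involving $\nabla\phi$. Young's inequality $ab\leq\epsilon a^{2}+(4\epsilon)^{-1}b^{2}$ then absorbs the $|\nabla w|$ cross terms into the positive $|\nabla w|^{2}$ term, and the $w^{k+3/2}\phi|\nabla\phi|$ cross term into $\epsilon\int uw^{k+2}\phi^{2}+\epsilon^{-1}\int uw^{k+1}|\nabla\phi|^{2}$, producing the step-up estimate
\[
\int_{M} uw^{k+2}\phi^{2}\;\leq\;\bigl((2m)^{2}+\epsilon\bigr)\int_{M} uw^{k+1}\phi^{2}\;+\;\tfrac{c(m)}{\epsilon}\int_{M} uw^{k+1}|\nabla\phi|^{2}.
\]
Iterating this from exponent $p/2$ down to $0$ accumulates the sharp factor $(2m)^{p}+\epsilon$; the intermediate cutoff-gradient terms are in turn reduced to a single $\int u|\nabla\phi|^{2}$ by replacing $\phi$ with a suitable power $\phi^{N}$ at each stage and applying the same inequality to the new cutoff.

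The main obstacle I expect is twofold and inseparable: first, extracting the Kähler refinement of $|\nabla^{2}f|^{2}$ with the precise coefficients needed to reach the sharp $(2m)^{p}$; and second, tuning the Young parameters so that this sharpness survives every step of the iteration. This algebraic tuning is what I expect to produce the restriction $p\leq 2(m+2)$: it should correspond to the range in which the quadratic form in the Young parameters arising from the simultaneous absorption of the $|\nabla w|^{2}$, $w^{k+2}$, and cross contributions remains nonnegative. Beyond $p=2(m+2)$ the discriminant of that form turns negative, the cross terms can no longer be absorbed while preserving the leading constant, and the argument breaks down.
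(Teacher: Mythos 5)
Your high-level architecture (Bochner for $f=\ln u$, multiply by $uw^{k}\phi^{2}$, integrate by parts, absorb cross terms via Young) is in the right family, but two of the load-bearing steps are not correct as stated, and they are in fact the places where the paper departs most sharply from a naive Bochner argument.

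First, the proposed pointwise ``K\"ahler refinement'' of the Hessian, $|\nabla^{2}f|^{2}\geq\tfrac{1}{2m}w^{2}+\kappa|\nabla w|^{2}/w+\cdots$, is the Riemannian mechanism, and it does not reach the sharp constant $4m^{2}$. The trace Cauchy--Schwarz on the Hermitian part captures only the scalar (trace) component of $f_{\alpha\bar\beta}$, whereas on the model ($\mathbb{CH}^{m}$, or the two-ended warped product in the rigidity theorem) the Hermitian Hessian also carries a rank-one term $u^{-1}u_{\bar\alpha}u_{\beta}$ and the full $(2,0)$ part $f_{\alpha\beta}$ is nonzero and \emph{not} equal in norm to $f_{\alpha\bar\beta}$. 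Because of this, the paper explicitly does not try to prove a single pointwise improvement of $|\nabla^{2}f|^{2}$. Instead it integrates $u^{-1}|u_{\alpha\bar\beta}|^{2}Q^{k}\phi^{2}$ and $u^{-1}|u_{\alpha\beta}|^{2}Q^{k}\phi^{2}$ by parts \emph{separately}, exploits harmonicity as $u_{\bar\alpha\beta\bar\beta}=0$ in the first and the Ricci identity for $u_{\alpha\beta\bar\beta}$ in the second, and feeds in the model-guided algebraic identities (their inequalities (\ref{a12}), (\ref{a13}), (\ref{a15})) that encode both the trace and rank-one structure of the Hessian. Without that separation, the sharp leading coefficient is out of reach.

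Second, the claimed step-up inequality
$\int_{M}uw^{k+2}\phi^{2}\leq((2m)^{2}+\epsilon)\int_{M}uw^{k+1}\phi^{2}+\tfrac{c}{\epsilon}\int_{M}uw^{k+1}|\nabla\phi|^{2}$
for every $k$, followed by iteration down to $k=0$, cannot be right: if such an inequality held unconditionally for all $k$, the theorem would hold for all $p$, with no restriction and no extra hypothesis. In the paper this inequality is obtained \emph{only} at $k=m$ (giving $p=2(m+2)$), because in their inequality (\ref{a17}) the sign-indefinite term $\int u^{-2}u_{\bar\alpha\beta}u_{\alpha}u_{\bar\beta}Q^{k}\phi^{2}$ appears with coefficient $(1-k/m)$, which is nonnegative for $k\leq m$ and exactly vanishes at $k=m$. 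That is the genuine source of the restriction $p\leq 2(m+2)$, not a discriminant condition on Young parameters. For $k>m$ the indefinite term carries a negative coefficient and cannot be discarded; the paper only controls it under the additional hypothesis $\lambda_{1}(M)=m^{2}$, which is precisely what Theorem \ref{GI} requires. In short, your proposal conflates the unconditional Theorem \ref{G} (one-shot argument at $k=m$, then a single Young step) with the conditional Theorem \ref{GI} (iteration of a recursion $A_{k}\leq 4m(m+1)A_{k-1}+\cdots$ anchored by the bound on $A_{0}$ from $\lambda_{1}=m^{2}$), and neither version proceeds by a pointwise Hessian bound or by an unconditional step-down in the exponent.
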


For $p=2$ the estimate was first established by the first author in \cite{M}%
, so our contribution here is to prove it for higher exponents $p\leq 2m+4$.
While in doing this we are inspired by the ideas in \cite{M}, Theorem \ref{G}
will require some delicate new estimates. Let us briefly describe the idea
of proof for (\ref{p}). Recall that Yau's gradient estimate for Riemannian
manifolds uses the maximum principle, the Bochner formula applied to the
function $f=\ln u$, and a clever manipulation of the hessian term $%
\left\vert f_{ij}\right\vert ^{2}$. To get a sharp estimate for K\"{a}hler
manifolds, the hessian term needs to be dealt with differently. To prove (%
\ref{p}) we will use integration by parts and we will estimate the complex
hessian $\left\vert f_{\alpha \bar{\beta}}\right\vert ^{2}$ and the reminder 
$\left\vert f_{\alpha \beta }\right\vert ^{2}$ in different ways. This
strategy seems to break down when $p$\thinspace $>2\left( m+2\right) $,
because some additional terms appear that are difficult to control.

However, we have obtained an integral estimate valid for all exponents $%
p\geq 2$ provided the manifold satisfies an additional assumption. Recall
that the bottom of spectrum of the Laplace operator $\Delta $ is
characterized by%
\begin{equation}
\lambda _{1}\left( M\right) =\inf_{\phi \in C_{0}^{\infty }\left( M\right) }%
\frac{\int_{M}\left\vert \nabla \phi \right\vert ^{2}}{\int_{M}\phi ^{2}}.
\label{V}
\end{equation}%
According to \cite{M}, $\lambda _{1}\left( M\right) \leq m^{2}$ holds on any
complete K\"{a}hler manifold with $\mathrm{Ric}\geq -2\left( m+1\right) $.
This estimate is sharp, being achieved on $\mathbb{CH}^{m}$ and on other
examples \cite{K, LW4}. We have the following result.

\begin{theorem}
\label{GI}Let $\left( M,g\right) $ be a complete K\"{a}hler manifold of
complex dimension $m$, with $\mathrm{Ric}\geq -2\left( m+1\right) $. Assume
in addition that $M$ has maximal bottom of spectrum for the Laplacian, $%
\lambda _{1}\left( M\right) =m^{2}.$ Then any positive harmonic function $u$
satisfies the integral gradient estimate 
\begin{equation*}
\int_{M}u\left\vert \nabla \ln u\right\vert ^{p}\phi ^{2}\leq \left( \left(
2m\right) ^{p}+\epsilon \right) \int_{M}u\phi ^{2}+\frac{c\left( p,m\right) 
}{\epsilon }\int_{M}u\left\vert \nabla \phi \right\vert ^{2},
\end{equation*}%
for any $p\geq 2$, any $\epsilon >0$, and any $\phi \geq 0$ with compact
support in $M$.
\end{theorem}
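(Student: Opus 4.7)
The plan is to extend Theorem \ref{G} to arbitrary $p \geq 2$ by induction on $p$, using the sharp spectral hypothesis $\lambda_1(M) = m^2$ as the key new ingredient. Since Theorem \ref{G} already covers $p \leq 2(m+2)$, this provides the base of the induction. For the inductive step, I would re-run the Bochner-plus-integration-by-parts strategy of Theorem \ref{G}, now with the variational characterization of $\lambda_1$ available to absorb the error terms that stalled that argument at $p=2(m+2)$.

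The central new identity comes from applying $m^2 \int_M \psi^2 \leq \int_M |\nabla \psi|^2$ to the test function $\psi = u^{1/2}|\nabla \ln u|^{(p-2)/2}\phi$. Expanding $|\nabla \psi|^2$ and using $\Delta u = 0$ to cancel a pair of cross terms of the form $\int \langle \nabla u, \nabla(u^{-1}\psi^2)\rangle$ via integration by parts, one obtains an inequality of the form
\begin{equation*}
m^2 \int_M u|\nabla\ln u|^{p-2}\phi^2 \leq \tfrac{1}{4} \int_M u|\nabla\ln u|^p \phi^2 + \tfrac{(p-2)^2}{16}\int_M u|\nabla\ln u|^{p-6}\big|\nabla|\nabla\ln u|^2\big|^2 \phi^2 + (\textit{cutoff terms}).
\end{equation*}
In parallel, the K\"ahler Bochner formula $\tfrac{1}{2}\Delta|\nabla f|^2 \geq \tfrac{1}{2m}|\nabla f|^4 + 2|f_{\alpha\beta}|^2 - \langle\nabla f,\nabla|\nabla f|^2\rangle - 2(m+1)|\nabla f|^2$ for $f=\ln u$, integrated against $u|\nabla \ln u|^{p-4}\phi^2$ and processed as in Theorem \ref{G}, produces an upper bound on $\int_M u|\nabla\ln u|^p\phi^2$ involving the same $|\nabla|\nabla\ln u|^2|^2$ integral as well as $\int_M u|\nabla\ln u|^{p-2}\phi^2$. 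Eliminating the common gradient-of-$|\nabla\ln u|^2$ integral between the two inequalities, and then invoking the inductive hypothesis on $\int_M u|\nabla\ln u|^{p-2}\phi^2$, should yield the desired estimate at exponent $p$.

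The main obstacle will be the sharp accounting of constants. The target coefficient $(2m)^p$ is only recovered because both the spectral contribution ($\lambda_1 = m^2$) and the K\"ahler Bochner improvement ($\tfrac{1}{2m}$) are themselves sharp, and the induction multiplies the factor $4m^2 = (2m)^2$ from the spectral side against the inductive $(2m)^{p-2}$. Any nonoptimal application of Young's inequality in absorbing cross terms with $\nabla\phi$ would be fatal for the leading constant, so those cross terms must be handled with a free parameter matched carefully across the inductive step; this is where I expect the delicate part of the argument to lie.
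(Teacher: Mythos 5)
Your strategy differs structurally from the paper's, but there is a genuine gap that the paper's argument resolves and yours does not. The spectral hypothesis applied to $\psi = u^{1/2}Q^{(p-2)/4}\phi$ (with $Q = |\nabla\ln u|^2$) produces, after killing the cross term via harmonicity of $u$, an inequality of the form
\begin{equation*}
4m^{2}\int_M u Q^{(p-2)/2}\phi^2 \;\leq\; \int_M u Q^{p/2}\phi^2 \;+\; \frac{(p-2)^2}{4}\int_M u Q^{(p-6)/2}\left|\nabla Q\right|^2\phi^2 \;+\;\text{(cutoff terms)}.
\end{equation*}
This is a \emph{lower} bound on the top-order integral $\int_M uQ^{p/2}\phi^2$, not an upper bound. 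On the Bochner side, the identity the paper derives (see (\ref{a17}) with $k=(p-4)/2$) reads
\begin{equation*}
\left(1-\tfrac{k}{m}\right)\int_M u^{-2}u_{\bar\alpha\beta}u_\alpha u_{\bar\beta}Q^k\phi^2 + \tfrac{k(k+1)}{32}\int_M u\left|\nabla Q\right|^2 Q^{k-1}\phi^2 \leq \left(\tfrac{m-1}{16m}-\tfrac{(3m-1)k}{32m^2}\right)\int_M uQ^{k+2}\phi^2 + \cdots ,
\end{equation*}
and precisely when $p>2(m+2)$, i.e.\ $k>m$, the coefficient $1-\tfrac{k}{m}$ turns negative. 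So the Bochner side gives an upper bound on $\int_M uQ^{p/2}\phi^2$ only modulo the term $\int_M u^{-2}u_{\bar\alpha\beta}u_\alpha u_{\bar\beta}Q^k\phi^2$, which now appears with the wrong sign and must be bounded from above with the sharp constant. Your two inequalities, taken together, do not control this complex-hessian integral: any linear combination with positive weights that cancels the $|\nabla Q|^2$ term still leaves $\bigl(\tfrac{k}{m}-1\bigr)\int_M u^{-2}u_{\bar\alpha\beta}u_\alpha u_{\bar\beta}Q^k\phi^2$ (with a positive coefficient and, on the model solution (\ref{Model}), a positive sign) on the wrong side. This is exactly the obstruction that stalled Theorem~\ref{G} at $p=2(m+2)$, and your proposal does not introduce a mechanism to get past it.

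The paper's route is different and is designed to handle this specific term. It proves a pointwise algebraic inequality $0\leq u^{-1}\left|u^{-1}u_{\alpha\beta}u_{\bar\beta}-m(m+1)u_\alpha\right|^2$ and combines it with (\ref{a9}), (\ref{a6'}) and (\ref{a13'}) to obtain a one-step recursion
\begin{equation*}
A_k \leq 4m(m+1)\,A_{k-1} + \mathcal{F}_0(k), \qquad A_k := \int_M u^{-2}u_{\bar\alpha\beta}u_\alpha u_{\bar\beta}Q^k\phi^2 - \tfrac{3m-1}{32m}\int_M uQ^{k+2}\phi^2 + \tfrac{m(m+1)}{8}\int_M uQ^{k+1}\phi^2,
\end{equation*}
and then uses $\lambda_1(M)=m^2$ \emph{only once}, at the bottom of the recursion, with the test function $u^{1/2}\phi$ (not $u^{1/2}Q^{(p-2)/4}\phi$), to show $A_0$ is controlled by cutoff terms. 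Iterating gives the needed bound on $\int_M u^{-2}u_{\bar\alpha\beta}u_\alpha u_{\bar\beta}Q^k\phi^2$ for all $k$, after which (\ref{a17}) yields $\int_M uQ^{k+2}\phi^2 \leq 4m^2\int_M uQ^{k+1}\phi^2 + \text{(cutoff)}$ and the theorem follows. Note also that the recursion constant is $4m(m+1)$, not $4m^2$: the sharp factor $(2m)^p$ emerges only at the final step, so the intuition that the spectral step contributes a clean $4m^2$ per iteration of the induction does not reflect the actual bookkeeping. To repair your argument you would need a separate device, analogous to the paper's $A_k$ recursion and the $A_0$ estimate, to bound the complex-hessian integral; as written, the two inequalities you propose to "eliminate between" do not close.
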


Theorems \ref{G} and \ref{GI} have applications to spectral estimates. Using
judicious test functions in (\ref{V}) it is possible to obtain upper bound
estimates for the bottom spectrum of the Laplacian. The most natural test
functions $\phi $ in (\ref{V}) are those depending only on distance
function; this eventually needs application of the Laplacian comparison
theorem. Using this approach, Cheng proved the sharp upper bound \cite{C} 
\begin{equation}
\lambda _{1}\left( M\right) \leq \frac{\left( n-1\right) ^{2}}{4}  \label{C}
\end{equation}%
on any Riemannian manifold satisfying $\mathrm{Ric}\geq -\left( n+1\right) $%
. This result was generalized in \cite{Ma} to the bottom spectrum $\lambda
_{1,p}\left( M\right) $ of the $p$-Laplacian%
\begin{equation*}
\Delta _{p}u=\mathrm{div}\left( \left\vert \nabla u\right\vert ^{p-2}\nabla
u\right) \text{,}
\end{equation*}%
which is characterized by 
\begin{equation}
\lambda _{1,p}\left( M\right) =\inf_{\phi \in C_{0}^{\infty }\left( M\right)
}\frac{\int_{M}\left\vert \nabla \phi \right\vert ^{p}}{\int_{M}\left\vert\phi\right\vert ^{p}}.
\label{V1}
\end{equation}

It was proved in \cite{Ma} that 
\begin{equation}
\lambda _{1,p}\left( M\right) \leq \left( \frac{n-1}{p}\right) ^{p}
\label{MP}
\end{equation}%
on any Riemannian manifold with $\mathrm{Ric}\geq -\left( n+1\right) $. This
can be seen as a generalization of Cheng's estimate, because by H\"{o}lder
inequality (\ref{MP}) implies (\ref{C}).

Both (\ref{C}) and (\ref{MP}) are no longer sharp if $\left( M^{m},g\right) $
is K\"{a}hler with Ricci curvature bounded by $\mathrm{Ric}\geq -2\left(
m+1\right) $. Furthermore, the Laplace comparison theorem with comparison
space $\mathbb{CH}^{m}$ fails for only Ricci curvature bounds \cite{Liu}, so
different ideas are now required. In \cite{M} the first author proved the
sharp spectral estimate 
\begin{equation}
\lambda _{1}\left( M\right) \leq m^{2},  \label{K}
\end{equation}%
by using a positive harmonic function (for example, the Green's function) as
a test function in (\ref{V}) and applying the integral gradient estimate in
Theorem \ref{G} for $p=2$.

As application of Theorems \ref{G} and \ref{GI}, we are able to extend (\ref%
{K}) for the $p$ Laplacian.

\begin{theorem}
\label{KE}Let $\left( M,g\right) $ be a K\"{a}hler manifold of complex
dimension $m$, with $\mathrm{Ric}\geq -2\left( m+1\right) $. Then the bottom
spectrum $\lambda _{1,p}\left( M\right) $ of the $p$-Laplacian is bounded by 
\begin{equation}
\lambda _{1,p}\left( M\right) \leq \left( \frac{2m}{p}\right) ^{p},
\label{EK}
\end{equation}%
for any $p\leq 2m+4$. If, moreover, $\left( M,g\right) $ has maximal bottom
of spectrum of the Laplacian, 
\begin{equation*}
\lambda _{1}\left( M\right) =m^{2},
\end{equation*}%
then 
\begin{equation}
\lambda _{1,p}\left( M\right) =\left( \frac{2m}{p}\right) ^{p},  \label{Max}
\end{equation}%
for any $p\geq 2$.
\end{theorem}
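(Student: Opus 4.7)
The plan is to apply the variational characterization (\ref{V1}) of $\lambda_{1,p}$ to a judiciously chosen test function built from a positive harmonic function, and then invoke Theorems \ref{G} and \ref{GI} to control the gradient integral that appears. This mirrors the $p=2$ strategy of \cite{M}.

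For the upper bound, let $u>0$ be a harmonic function on $M$ (take for instance the Green's function $G(x_0,\cdot)$ on a nonparabolic manifold; the parabolic case is dealt with by an approximation on exhausting balls), and let $\eta$ be a nonnegative cutoff with compact support. Setting $\phi=u^{1/p}\eta$ gives
\[
\nabla\phi = u^{1/p}\Bigl(\tfrac{\eta}{p}\nabla\ln u+\nabla\eta\Bigr),
\]
and the elementary inequality $(a+b)^p\le(1+\varepsilon)^{p-1}a^p+C(\varepsilon,p)b^p$ yields
\[
\int_M|\nabla\phi|^p\le\frac{(1+\varepsilon)^{p-1}}{p^p}\int_Mu|\nabla\ln u|^p\eta^p+C(\varepsilon,p)\int_Mu|\nabla\eta|^p.
\]
Applying Theorem \ref{G} (resp.\ Theorem \ref{GI}) to the cutoff $\eta^{p/2}$ then bounds $\int_Mu|\nabla\ln u|^p\eta^p$ by $((2m)^p+\varepsilon)\int_Mu\eta^p$ plus a remainder of the form $c(m,p)\varepsilon^{-1}\int_Mu\eta^{p-2}|\nabla\eta|^2$, valid for $p\le2m+4$ (resp.\ for any $p\ge2$, under the assumption $\lambda_1(M)=m^2$). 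Since $\int_M\phi^p=\int_Mu\eta^p$, dividing gives
\[
\frac{\int_M|\nabla\phi|^p}{\int_M\phi^p}\le(1+\varepsilon)^{p-1}\frac{(2m)^p+\varepsilon}{p^p}+(\text{error terms in }\eta).
\]
The next step is to dilate the cutoff. Taking $\eta_R\equiv1$ on $B(x_0,R)$, supported in $B(x_0,2R)$, with $|\nabla\eta_R|\le C/R$, the error terms are bounded by a power of $1/R$ times $V(2R)/V(R)$, where $V(R)=\int_{B(x_0,R)}u$. By Yau's gradient estimate (\ref{YK}) one has $u\le u(x_0)e^{cd(x_0,\cdot)}$, and $\mathrm{Ric}\ge-2(m+1)$ gives Bishop--Gromov volume growth $\mathrm{Vol}(B(R))\le Ce^{c'R}$; hence $V(R)\le e^{c''R}$. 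A standard pigeonhole argument then produces a sequence $R_k\to\infty$ along which $V(2R_k)/V(R_k)$ stays bounded, so the error terms vanish. Letting $\varepsilon\to0$ gives $\lambda_{1,p}(M)\le(2m/p)^p$.

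For the reverse inequality in the case $\lambda_1(M)=m^2$, I use a H\"older-type trick that is essentially the $p$-Laplace analogue of Cheng's comparison. For arbitrary nonnegative $\phi\in C_0^\infty(M)$, set $\psi=\phi^{p/2}$, so $\psi^2=\phi^p$ and $|\nabla\psi|^2=(p/2)^2\phi^{p-2}|\nabla\phi|^2$. H\"older's inequality with conjugate exponents $(p/2,p/(p-2))$ produces
\[
\int_M|\nabla\psi|^2\le\left(\frac{p}{2}\right)^2\Bigl(\int_M\phi^p\Bigr)^{(p-2)/p}\Bigl(\int_M|\nabla\phi|^p\Bigr)^{2/p}.
\]
Dividing by $\int_M\psi^2=\int_M\phi^p$ and invoking $\lambda_1(M)=m^2$ yields $(2m/p)^p\le\int_M|\nabla\phi|^p/\int_M\phi^p$, hence $\lambda_{1,p}(M)\ge(2m/p)^p$. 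Combined with the upper bound supplied by Theorem \ref{GI}, this gives the equality (\ref{Max}).

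The main obstacle is the cutoff removal: one must verify that \emph{both} error terms, the $|\nabla\eta|^p$ piece arising from differentiating $\phi$ and the $\eta^{p-2}|\nabla\eta|^2$ piece imported from the integral gradient estimate, die in the limit. This is the step that forces one to exploit the exponential volume and harmonic function bounds and a pigeonhole selection of radii; the rest of the argument is essentially mechanical once these ingredients are in place. A secondary technical point is ensuring the existence of a suitable positive harmonic function in full generality, which is handled by distinguishing the (non)parabolic cases.
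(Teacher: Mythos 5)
Your overall strategy is the right one and matches the paper's: build a test function $\psi = G^{1/p}\phi$ out of the Green's function, use the integral gradient estimate of Theorems~\ref{G}/\ref{GI} to bound $\int G|\nabla\ln G|^p\phi^p$, and then remove the cutoff. The reverse inequality via H\"older ($\psi=\phi^{p/2}$) is also exactly (\ref{b1}) in the paper, and is fine. But the cutoff-removal step, which you correctly flag as the ``main obstacle,'' has a genuine gap.

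Your plan is to bound $V(R)=\int_{B(R)}u$ by $Ce^{c''R}$ (via Yau's pointwise bound $u\le u(x_0)e^{cd}$ plus Bishop--Gromov) and then extract a sequence $R_k\to\infty$ with $V(2R_k)/V(R_k)$ bounded, by a ``standard pigeonhole.'' That pigeonhole does not exist under exponential growth. If $V(R)\sim e^{cR}$, then $V(2R)/V(R)\sim e^{cR}$ along \emph{every} sequence of radii, and dividing by any power $R^p$ still diverges. (The familiar pigeonhole selects $R_k$ with $V(R_k+1)/V(R_k)$ bounded, which only yields a bounded error term, not a vanishing one; it does \emph{not} control the doubling ratio $V(2R)/V(R)$ when the growth is exponential.) So your error terms cannot be made to vanish with the information you are using.

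What you are missing is the sharp growth estimate for the Green's function that the paper borrows from \cite{MSW}:
\begin{equation*}
\frac{1}{C}R\ \le\ \int_{B(x_0,R)}G(x_0,x)\,dx\ \le\ CR,
\end{equation*}
valid on a manifold with $\mathrm{Ric}\ge -2(m+1)$ and $\lambda_1(M)>0$. This linear growth in both directions is essential: the upper bound makes the annulus error term $\frac{1}{R}\int_{B(2R)\setminus B(R)}G$ stay bounded, while the lower bound makes the ``good'' term $\int_{B(R)\setminus B(2)}G$ grow linearly in $R$, which is what produces the contradiction in the paper's argument (see (\ref{b9})). Yau's pointwise gradient estimate and Bishop--Gromov volume comparison are far too crude here; they give exponential growth in both directions, which is non-informative for this purpose. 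You also need the cutoff to avoid the pole of $G$ (the paper supports $\phi$ in $B(2R)\setminus B(1)$), which your exposition omits. Finally, the ``parabolic case via exhaustion'' remark is unnecessary: if $\lambda_{1,p}(M)>0$ then $\lambda_1(M)>0$ by (\ref{b4}) and $M$ is automatically non-parabolic, while if $\lambda_{1,p}(M)=0$ the desired upper bound is trivial.
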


Let us note that by H\"{o}lder inequality, the assumption that $\lambda
_{1}\left( M\right) =m^{2}$ implies $\lambda _{1,p}\left( M\right) \geq
\left( \frac{2m}{p}\right) ^{p}$ for any $p\geq 2$. So to prove (\ref{Max})
we showed the converse inequality that $\lambda _{1,p}\left( M\right) \leq
\left( \frac{2m}{p}\right) ^{p}$ for any $p\geq 2$. Hence, (\ref{Max}) can
be rephrased that if $\lambda _{1}\left( M\right) $ is maximal relative to
the Ricci curvature bound, then $\lambda _{1,p}\left( M\right) $ is maximal
as well. The converse of this statement is not known.

Finally, as Theorem \ref{KE} is sharp, we address the equality case. A
remarkable theory developed by P. Li and J. Wang \cite{LW1,LW2,LW3, LW4}
proves rigidity of complete manifolds with more than one end and achieving
maximal bottom of spectrum. As this theory uses harmonic functions
associated to the number of ends of a manifold, it can be applied here to
study rigidity in Theorem \ref{KE}.

\begin{theorem}
\label{Rigid}Let $\left( M,g\right) $ be a K\"{a}hler manifold of complex
dimension $m$, with $\mathrm{Ric}\geq -2\left( m+1\right) $. Assume that $%
p\leq 2m$ and 
\begin{equation*}
\lambda _{1,p}\left( M\right) =\left( \frac{2m}{p}\right) ^{p}.
\end{equation*}%
Then either $M$ has one end or it is diffeomorphic to $\mathbb{R}\times N,$
for a compact $2m-1$ dimensional manifold $N$, and the metric on $M$ is
given by 
\begin{equation*}
ds_{M}^{2}=dt^{2}+e^{-4t}\omega _{2}^{2}+e^{-2t}\left( \omega
_{3}^{2}+...+\omega _{2m}^{2}\right) ,
\end{equation*}%
where $\left\{ \omega _{2},..,\omega _{2m}\right\} $ is an orthonormal
coframe for $N$.
\end{theorem}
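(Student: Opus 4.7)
The plan is to follow the Li--Wang rigidity strategy for maximal spectrum. Assume $M$ has more than one end. Since $\lambda_{1,p}(M)>0$ forces $\lambda_1(M)>0$, every end is nonparabolic, and the theory of \cite{LW1,LW2,LW3,LW4} produces a nonconstant positive harmonic function $u$ on $M$ whose behavior at two chosen ends is controlled (roughly, exponential decay on one, boundedness on the other). This $u$ will serve both as the source of pointwise rigidity and as the raw material from which the warped-product metric is read off.

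Step one: promote the global equality $\lambda_{1,p}(M)=(2m/p)^p$ to the pointwise identity $|\nabla\ln u|\equiv 2m$. Insert $\psi\,u^{1/p}$, for a smooth compactly supported cutoff $\psi$, into the Rayleigh quotient (\ref{V1}):
\[
\lambda_{1,p}(M)\int\psi^p u\;\leq\;\int\bigl|\nabla(\psi u^{1/p})\bigr|^p
= p^{-p}\int\psi^p\,u\,|\nabla\ln u|^p\;+\;E(\psi,u),
\]
where $E(\psi,u)$ collects cross and cutoff terms. Applying Theorem \ref{G} to the leading term (legal because $p\leq 2m<2m+4$) and choosing $\psi$ to concentrate on the decay end of $u$, the error $E$ is controlled by $\int u|\nabla\psi|^2$, which can be driven to zero provided $u^{1/p}$ is integrable enough at infinity; the hypothesis $p\leq 2m$ is precisely what closes the Hölder manipulations without diluting the sharp constant. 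Equality throughout then forces $|\nabla\ln u|\equiv 2m$ pointwise on $M$, together with equality in every inequality used in the proof of Theorem \ref{G}.

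Step two: read off the structure of $f:=\ln u$. Tracing through the equality cases in the Bochner-type identity behind Theorem \ref{G}, and using the Kähler condition to separate the symmetric and Hermitian parts of the complex Hessian of $f$, the real Hessian of $f$ must diagonalise in the orthonormal frame $\{e_1,e_2,\ldots,e_{2m}\}$ with $e_1=\nabla f/|\nabla f|$ and $e_2=Je_1$, with eigenvalues $0,-4m,-2m,\ldots,-2m$; moreover the line $\mathbb R Je_1$ and the orthogonal complement inside the level sets are each preserved under the flow of $\nabla f$. Setting $t:=\frac{1}{2m}f$ turns $f$ into a function with $|\nabla t|\equiv 1$ whose integral curves are geodesics; properness of $t$ follows from the integrability of $u$ on slabs, so the flow of $\nabla t$ identifies $M$ with $\mathbb R\times N$ where $N:=t^{-1}(0)$ is compact. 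The eigenvalues $0,-2,-1,\ldots,-1$ of $\mathrm{Hess}(t)$ give the principal curvatures of the level sets, and integrating the resulting ODE along the flow yields warping factors $e^{-4t}$ in the $Je_1$-direction and $e^{-2t}$ in the remaining $2m-2$ directions, matching the claimed metric.

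The principal obstacle I anticipate is step one: the harmonic function produced by Li--Wang has no compact support, so transferring the global integral equality into the pointwise identity $|\nabla\ln u|\equiv 2m$ requires cutoffs that are simultaneously admissible in (\ref{V1}), make the error terms in Theorem \ref{G} tend to zero, and do not dilute the constant $(2m/p)^p$ when Hölder's inequality is applied. Identifying the exact exponent range in which this closes — namely $p\leq 2m$ — and controlling the interaction of $\psi$ with the asymptotics of $u$ on both ends, is the delicate piece. Once this pointwise rigidity is in place, the warped-product reconstruction in step two follows a well-established pattern and is, by comparison, routine.
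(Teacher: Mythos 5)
Your overall framework (Li--Wang rigidity: construct a harmonic function adapted to two ends, force equality in a sharp gradient estimate, read off $|\nabla\ln u|\equiv 2m$, then reconstruct the warped-product metric) is the right one, and your step two matches what the paper does by invoking the equality cases of the inequalities in the gradient estimate. But step one, which you yourself flag as the principal obstacle, has two genuine gaps.

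First, you skip the structural input that makes the construction of $u$ possible. The paper first reduces to $p=2m$ (maximality of $\lambda_{1,p}$ passes upward in $p$ by H\"older) and then proves Proposition \ref{Infinite}: from $\lambda_{1,2m}(M)=1$ and the Sung--Wang inequality (\ref{b4}) one gets $\lambda_1(M)>\frac{m+1}{2}$, whence by Theorem B of \cite{LW4} the manifold has exactly one \emph{infinite-volume} end. The Li--Tam harmonic function $u$ used thereafter is bounded with $\liminf u=0$ on the infinite-volume end $E$ and is unbounded on the finite-volume end $F$; the exponential integral/volume decay estimates (\ref{e}), (\ref{v}) on these two ends are exactly what the subsequent cutoff analysis needs. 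You instead describe $u$ as ``exponential decay on one end, bounded on the other,'' which is not the actual behavior. Moreover the role of $p\leq 2m$ is to make (\ref{b4}) strong enough to conclude that only one end has infinite volume; it has nothing to do with ``closing the H\"older manipulations without diluting the sharp constant.''

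Second, and more seriously, your claim that ``choosing $\psi$ to concentrate on the decay end of $u$, the error $E$ is controlled by $\int u|\nabla\psi|^2$, which can be driven to zero'' does not work. The error terms in Proposition \ref{B} (the quantities $\mathcal{R}_i$) are not merely $\int u|\nabla\phi|^2$; they involve $\int\langle\nabla u,\nabla\phi^{2m}\rangle Q^{m-1}$ and two Hessian boundary terms, and these do \emph{not} individually tend to zero. The paper's cutoff $\phi=(\chi\psi)^2$ has a level-set component $\chi$ transitioning on $L(\delta\varepsilon,\varepsilon)\subset E$ and on $L(T,\beta T)\subset F$, and Lemmas \ref{0}, \ref{0'} compute that the leading contribution to each boundary term is the \emph{same} explicit constant (proportional to $\frac{A}{4m}(4m^2)^{m-1}$, with $A=\int_{l(t)}|\nabla u|$) with \emph{opposite} signs on the two ends, so they cancel, leaving only $O\bigl(A(-\ln\delta)^{-1/2}+A(\ln\beta)^{-1/2}\bigr)$. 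The paper explicitly flags that ``each term in $\mathcal{F}$ does not converge to zero on a given end'' and that the rigidity is obtained only after this cancellation, a complication absent in the Riemannian case. A one-sided cutoff concentrated on the decay end would leave you with a nonvanishing boundary contribution and no way to conclude $|\nabla\ln u|\equiv 2m$. This cancellation mechanism is the real content of Lemmas \ref{1}--\ref{3} and is the part your sketch does not supply.
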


The proof of this theorem uses the new estimates obtained in Theorem \ref{G}%
, applied to a harmonic function constructed under the assumption that the
manifold has more than one end. The rigidity is obtained by reading the
equality from the estimates in Theorem \ref{G}. The restriction $p\leq 2m$
is assumed in order to use a result in \cite{SW} that rules out the
existence of two infinite volume ends.

The structure of the paper is as follows. In Section \ref{S1} we prove the
gradient estimate Theorems \ref{G} and \ref{GI}. This is applied in Section %
\ref{S2} to obtain the spectral estimates Theorem \ref{KE}. In Section \ref%
{S3} we study the rigidity result in Theorem \ref{Rigid}.

\section{An integral gradient estimate for harmonic functions\label{S1}}

Let $\left( M,g\right) $ be a K\"{a}hler manifold. On $M$ we consider the
Riemannian metric 
\begin{equation*}
ds^{2}:=\mathrm{Re}\left( g_{\alpha \bar{\beta}}dz^{\alpha }d\bar{z}^{\beta
}\right) .
\end{equation*}%
In this Riemannian metric we have 
\begin{eqnarray*}
\Delta u &=&4u_{\alpha \bar{\alpha}} \\
\left\langle \nabla u,\nabla v\right\rangle &=&2\left( u_{\alpha }v_{\bar{%
\alpha}}+u_{\bar{\alpha}}v_{\alpha }\right) ,
\end{eqnarray*}%
for any two function $u,v$ on $M$. 
Throughout the paper we use Einstein's summation convention. 

 With respect to this metric, Yau's
gradient estimate says that%
\begin{equation*}
\left\vert \nabla \ln u\right\vert ^{2}\leq 4m^{2}+2m-2,
\end{equation*}%
for any positive harmonic function $u$ on $M$. We will prove the following
sharp integral gradient estimate.

\begin{theorem}
\label{Green}Let $\left( M,g\right) $ be a complete K\"{a}hler manifold of
complex dimension $m$, with $\mathrm{Ric}\geq -2\left( m+1\right) $. Then
any positive harmonic function $u$ satisfies the integral gradient estimate 
\begin{equation*}
\int_{M}u\left\vert \nabla \ln u\right\vert ^{p}\phi ^{2}\leq \left( \left(
2m\right) ^{p}+\epsilon \right) \int_{M}u\phi ^{2}+\frac{c\left( m\right) }{%
\epsilon }\int_{M}u\left\vert \nabla \phi \right\vert ^{2},
\end{equation*}%
for any $p\leq 2\left( m+2\right) $, any $\epsilon >0$, and any $\phi \geq 0$
with compact support in $M$. Here $c\left( m\right) >0$ is a constant
depending only on $m$.
\end{theorem}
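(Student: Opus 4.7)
Set $f=\log u$ and $F=|\nabla f|^{2}$. The harmonicity of $u$ gives $\Delta f=-F$ and, in a Kähler unitary frame, $f_{\alpha\bar\alpha}=-|f_\alpha|^{2}=-F/4$. The Bochner formula applied to $f$, combined with $\Delta f=-F$ and the Ricci bound $\mathrm{Ric}\ge-2(m+1)$, yields the pointwise inequality
\[
\tfrac{1}{2}\Delta F\;\ge\;|\nabla^{2}f|^{2}-\langle\nabla F,\nabla f\rangle-2(m+1)F.
\]

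My strategy is to multiply this inequality by the weight $uF^{(p-2)/2}\phi^{2}$ and integrate over $M$. The Laplacian on the left is moved off $F$ by integration by parts; since $\Delta u=0$, the only surviving contributions are cutoff errors involving $\nabla\phi$, $\Delta\phi$, and $\nabla F$. Integrating the $\langle\nabla F,\nabla f\rangle$ term by parts and using $\Delta f=-F$ generates matching cross-terms of the form $\int uF^{(p-2)/2}\phi\langle\nabla\phi,\nabla f\rangle$ that cancel those produced in the Laplacian step. What remains is an inequality in which the weighted hessian integral $\int uF^{(p-2)/2}\phi^{2}|\nabla^{2}f|^{2}$ controls $\int uF^{p/2}\phi^{2}$ up to cutoff quantities.

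To obtain the sharp constant $(2m)^{p}$ I would split $|\nabla^{2}f|^{2}$ into a pure part $\sum|f_{\alpha\beta}|^{2}$ and a mixed part $\sum|f_{\alpha\bar\beta}|^{2}$. For the mixed part, Cauchy-Schwarz combined with harmonicity gives the pointwise bound
\[
\sum_{\alpha,\beta}|f_{\alpha\bar\beta}|^{2}\;\ge\;\frac{1}{m}\Big|\sum_{\alpha}f_{\alpha\bar\alpha}\Big|^{2}\;=\;\frac{F^{2}}{16m},
\]
which by itself was enough to handle $p=2$ in \cite{M}. For $p>2$ the mixed contribution falls short of $(2m)^{p}$, so I would treat $\int uF^{(p-2)/2}\phi^{2}|f_{\alpha\beta}|^{2}$ separately by an additional integration by parts using the $\bar\partial$ structure. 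On a Kähler manifold the commutator of these derivatives involves only the Kähler curvature tensor rather than a generic Ricci term, so combined with $\mathrm{Ric}\ge-2(m+1)$ this secondary step contributes a positive multiple of $\int uF^{p/2+1}\phi^{2}$ that sharpens the leading coefficient to exactly $(2m)^{2}$. Once the chained inequality $\int uF^{p/2+1}\phi^{2}\le(2m)^{2}\int uF^{p/2}\phi^{2}+(\mathrm{cutoff})$ is in place, iteration in the exponent together with a standard Young's inequality absorption produces the stated form with constant $(2m)^{p}+\epsilon$.

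I expect the main difficulty to be the bookkeeping in this second integration by parts. Distributing covariant derivatives across the weight $uF^{(p-2)/2}\phi^{2}$ creates many auxiliary terms involving $\nabla F$, third derivatives of $f$, and $\nabla\phi$, each of which must be absorbed either into $\int uF^{p/2+1}\phi^{2}$ via Young's inequality or into the cutoff quantities $\int u\phi^{2}$ and $\int u|\nabla\phi|^{2}$. The ceiling $p\le 2(m+2)$ should arise because for larger $p$ the coefficient multiplying the residual $|\nabla F|^{2}$ error term reverses sign and the absorption breaks down; this is the technical reason the threshold in the statement is precisely $2(m+2)$ and, presumably, why an extra spectral hypothesis is needed in Theorem \ref{GI} to reach all $p\ge2$.
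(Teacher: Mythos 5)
Your overall framework is the right one, and it matches the paper's at the coarse level: set $f=\ln u$, $Q=|\nabla f|^2$, multiply a Bochner-type identity by $uQ^{k}\phi^{2}$ (with $k=(p-2)/2$), integrate by parts, and treat the mixed Hessian $f_{\alpha\bar\beta}$ and pure Hessian $f_{\alpha\beta}$ differently. The pointwise bound $\sum|f_{\alpha\bar\beta}|^{2}\ge\frac{Q^{2}}{16m}$ you propose for the mixed part is in fact equivalent, after the change of variables $u_{\alpha\bar\beta}=u(f_{\alpha\bar\beta}+f_{\alpha}f_{\bar\beta})$ and using harmonicity, to the paper's completed-square inequality \eqref{a13}, and it is saturated on the model; so that ingredient is sound.

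The genuine gap is in how you plan to get from this setup to the constant $(2m)^{p}$. You assert that "an additional integration by parts using the $\bar\partial$ structure" on the pure part "sharpens the leading coefficient to exactly $(2m)^{2}$," but you give no mechanism, and the obvious move (using only the Ricci identity $u_{\alpha\beta\bar\beta}=u_{\beta\bar\beta\alpha}+R_{\alpha\bar\gamma}u_{\gamma}$ plus the Ricci lower bound, then discarding the pure part) returns a constant that is strictly worse than $4m^{2}$. What the paper actually does is quite different and more delicate: it does \emph{not} pass immediately to a pointwise lower bound on $|f_{\alpha\bar\beta}|^{2}$ but instead keeps the cross term $\int u^{-2}u_{\bar\alpha\beta}u_{\alpha}u_{\bar\beta}Q^{k}\phi^{2}$ explicitly (i.e., the mixed Hessian contracted twice with $\nabla u$), reintroduces it through a second completed square $\bigl|u_{\bar\alpha\beta}u_{\alpha}-\frac{m-1}{4m}uQu_{\beta}\bigr|^{2}\ge0$ (the paper's \eqref{a15}), and chains these through \eqref{a14}--\eqref{a17} to arrive at an inequality in which this cross term has the coefficient $1-\frac{k}{m}$. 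The sharp constant then comes from choosing $k=m$ so that this unfavorable term vanishes identically, leaving $\int_{M}uQ^{m+2}\phi^{2}\le 4m^{2}\int_{M}uQ^{m+1}\phi^{2}+\text{(cutoff)}$, and a single application of Young's inequality at this top exponent finishes; the estimate for $p<2(m+2)$ is then deduced downward by Young, not built up by iterating an exponent-raising inequality.

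This also means your diagnosis of the ceiling $p\le 2(m+2)$ is wrong. The coefficient in front of the $\int u|\nabla Q|^{2}Q^{k-1}\phi^{2}$ term in the paper's \eqref{a17} is $\frac{k(k+1)}{32}$, which is nonnegative for all $k\ge0$ and never reverses sign. The restriction comes instead from the cross-term coefficient $1-\frac{k}{m}$: at $k=m$ (equivalently $p=2(m+2)$) the term disappears; for $k>m$ it appears with negative sign and there is no favorable way to control it. This is precisely the "additional terms that are difficult to control" the introduction alludes to, and it is also why an extra hypothesis is needed in Theorem \ref{GI} to reach larger $p$. So as written your proposal correctly identifies the starting point but omits the central device — the choice $k=m$ and the tracking of $\int u^{-2}u_{\bar\alpha\beta}u_{\alpha}u_{\bar\beta}Q^{k}\phi^{2}$ through the two completed squares — that produces the sharp constant and explains the exponent range.
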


\begin{proof}
Let $u:M\rightarrow \mathbb{R}$ be a positive harmonic function on a K\"{a}%
hler manifold $\left( M,g\right) $ with $\mathrm{Ric}\geq -2\left(
m+1\right) $. In complex coordinates, this means that 
\begin{equation*}
R_{\alpha \bar{\beta}}\geq -\left( m+1\right) g_{\alpha \bar{\beta}}.
\end{equation*}%
Here and throughout, $\left\{ \nu _{\alpha }\right\} _{\alpha =1,m}$ is a
local unitary frame. Denote with 
\begin{equation}
Q=\left\vert \nabla \ln u\right\vert ^{2}.  \label{Q}
\end{equation}%
Let $\phi $ be a cut-off function with compact support in $M$ and fix any $%
k\geq 0$. To prove this theorem we use a strategy inspired from \cite{M}.
Let us note that the two end example from Theorem \ref{Rigid} admits a
positive harmonic function $w$ so that $\left\vert \nabla w\right\vert =2mw$
and the hessian of $w$ satisfies 
\begin{eqnarray}
w_{\alpha \bar{\beta}} &=&-mg_{\alpha \bar{\beta}}w+w^{-1}w_{\alpha }w_{\bar{%
\beta}}  \label{Model} \\
w_{\alpha \beta } &=&\frac{m+1}{m}w^{-1}w_{\alpha }w_{\beta },  \notag
\end{eqnarray}%
Because for this example $\left\vert w_{\alpha \bar{\beta}}\right\vert
^{2}\neq \left\vert w_{\alpha \beta }\right\vert ^{2}$, we will compute each
expressions separately, using integration by parts. We have%
\begin{eqnarray*}
\int_{M}u^{-1}\left\vert u_{\alpha \bar{\beta}}\right\vert ^{2}Q^{k}\phi
^{2} &=&-\int_{M}u_{\alpha }\left( u^{-1}u_{\bar{\alpha}\beta }Q^{k}\phi
^{2}\right) _{\bar{\beta}} \\
&=&-\int_{M}u^{-1}u_{\bar{\alpha}\beta \bar{\beta}}u_{\alpha }Q^{k}\phi
^{2}+\int_{M}u^{-2}u_{\bar{\alpha}\beta }u_{\alpha }u_{\bar{\beta}}Q^{k}\phi
^{2} \\
&&-k\int_{M}u^{-1}u_{\bar{\alpha}\beta }u_{\alpha }Q_{\bar{\beta}%
}Q^{k-1}\phi ^{2}-\int_{M}u^{-1}u_{\bar{\alpha}\beta }u_{\alpha }\left( \phi
^{2}\right) _{\bar{\beta}}Q^{k}.
\end{eqnarray*}%
Since $u$ is harmonic, we have that $u_{\bar{\alpha}\beta \bar{\beta}}=0$. \
This implies%
\begin{eqnarray}
\int_{M}u^{-1}\left\vert u_{\alpha \bar{\beta}}\right\vert ^{2}Q^{k}\phi
^{2} &=&\int_{M}u^{-2}u_{\bar{\alpha}\beta }u_{\alpha }u_{\bar{\beta}%
}Q^{k}\phi ^{2}  \label{a1} \\
&&-k\int_{M}u^{-1}\mathrm{Re}\left( u_{\bar{\alpha}\beta }u_{\alpha }Q_{\bar{%
\beta}}\right) Q^{k-1}\phi ^{2}  \notag \\
&&-\int_{M}u^{-1}\mathrm{Re}\left( u_{\bar{\alpha}\beta }u_{\alpha }\left(
\phi ^{2}\right) _{\bar{\beta}}\right) Q^{k},  \notag
\end{eqnarray}%
where $\mathrm{Re}\left( z\right) $ denotes the real part of $z$.

Furthermore, since 
\begin{eqnarray*}
Q &=&u^{-2}\left\vert \nabla u\right\vert ^{2} \\
&=&4u^{-2}u_{\gamma }u_{\bar{\gamma}},
\end{eqnarray*}%
it follows that 
\begin{equation*}
Q_{\bar{\beta}}=4u^{-2}u_{\gamma \bar{\beta}}u_{\bar{\gamma}}+4u^{-2}u_{\bar{%
\gamma}\bar{\beta}}u_{\gamma }-2Qu_{\bar{\beta}}u^{-1}.
\end{equation*}

We use this to compute%
\begin{equation}
\mathrm{Re}\left( u_{\bar{\alpha}\beta }u_{\alpha }Q_{\bar{\beta}}\right)
=4u^{-2}\left\vert u_{\bar{\alpha}\beta }u_{\alpha }\right\vert ^{2}+4u^{-2}%
\mathrm{Re}\left( u_{\bar{\alpha}\beta }u_{\alpha }u_{\bar{\gamma}\bar{\beta}%
}u_{\gamma }\right) -2u^{-1}Qu_{\bar{\alpha}\beta }u_{\alpha }u_{\bar{\beta}}
\label{a2}
\end{equation}%
Notice that 
\begin{equation*}
2\mathrm{Re}\left( u_{\bar{\alpha}\beta }u_{\alpha }u_{\bar{\gamma}\bar{\beta%
}}u_{\gamma }\right) =\left\vert u_{\bar{\alpha}\beta }u_{\alpha }+u_{\alpha
\beta }u_{\bar{\alpha}}\right\vert ^{2}-\left\vert u_{\bar{\alpha}\beta
}u_{\alpha }\right\vert ^{2}-\left\vert u_{\alpha \beta }u_{\bar{\alpha}%
}\right\vert ^{2}.
\end{equation*}%
We write 
\begin{eqnarray*}
u_{\bar{\alpha}\beta }u_{\alpha }+u_{\alpha \beta }u_{\bar{\alpha}} &=&\frac{%
1}{4}\left( \left\vert \nabla u\right\vert ^{2}\right) _{\beta } \\
&=&\frac{1}{4}Q_{\beta }u^{2}+\frac{1}{2}Quu_{\beta },
\end{eqnarray*}%
and hence get that 
\begin{eqnarray*}
\left\vert u_{\bar{\alpha}\beta }u_{\alpha }+u_{\alpha \beta }u_{\bar{\alpha}%
}\right\vert ^{2} &=&\frac{1}{16}\left\vert Q_{\beta }u+2Qu_{\beta
}\right\vert ^{2}u^{2} \\
&=&\frac{1}{64}\left\vert \nabla Q\right\vert ^{2}u^{4}+\frac{1}{16}%
Q^{3}u^{4}+\frac{1}{16}Q\left\langle \nabla Q,\nabla u\right\rangle u^{3}.
\end{eqnarray*}%
In conclusion, 
\begin{eqnarray*}
2\mathrm{Re}\left( u_{\bar{\alpha}\beta }u_{\alpha }u_{\bar{\gamma}\bar{\beta%
}}u_{\gamma }\right) &=&\frac{1}{64}\left\vert \nabla Q\right\vert ^{2}u^{4}+%
\frac{1}{16}Q^{3}u^{4}+\frac{1}{16}Q\left\langle \nabla Q,\nabla
u\right\rangle u^{3} \\
&&-\left\vert u_{\bar{\alpha}\beta }u_{\alpha }\right\vert ^{2}-\left\vert
u_{\alpha \beta }u_{\bar{\alpha}}\right\vert ^{2}.
\end{eqnarray*}

Plugging this in (\ref{a2}) it follows that 
\begin{eqnarray}
\mathrm{Re}\left( u_{\bar{\alpha}\beta }u_{\alpha }Q_{\bar{\beta}}\right)
&=&2u^{-2}\left\vert u_{\bar{\alpha}\beta }u_{\alpha }\right\vert
^{2}-2u^{-2}\left\vert u_{\alpha \beta }u_{\bar{\alpha}}\right\vert ^{2}
\label{a2'} \\
&&+\frac{1}{32}\left\vert \nabla Q\right\vert ^{2}u^{2}+\frac{1}{8}%
Q^{3}u^{2}+\frac{1}{8}Q\left\langle \nabla Q,\nabla u\right\rangle u  \notag
\\
&&-2u^{-1}Qu_{\bar{\alpha}\beta }u_{\alpha }u_{\bar{\beta}}.  \notag
\end{eqnarray}%
Similarly, one can also prove the following identity that will be used later 
\begin{eqnarray}
\mathrm{Re}\left( u_{\alpha \beta }u_{\bar{\alpha}}Q_{\bar{\beta}}\right)
&=&2u^{-2}\left\vert u_{\alpha \beta }u_{\bar{\alpha}}\right\vert
^{2}-2u^{-2}\left\vert u_{\bar{\alpha}\beta }u_{\alpha }\right\vert ^{2}
\label{a2''} \\
&&+\frac{1}{32}\left\vert \nabla Q\right\vert ^{2}u^{2}+\frac{1}{8}%
Q^{3}u^{2}+\frac{1}{8}Q\left\langle \nabla Q,\nabla u\right\rangle u  \notag
\\
&&-2u^{-1}Q\mathrm{Re}\left( u_{\alpha \beta }u_{\bar{\alpha}}u_{\bar{\beta}%
}\right) .  \notag
\end{eqnarray}

Plugging (\ref{a2'}) into (\ref{a1}) we obtain 
\begin{eqnarray}
\int_{M}u^{-1}\left\vert u_{\alpha \bar{\beta}}\right\vert ^{2}Q^{k}\phi
^{2} &=&\left( 2k+1\right) \int_{M}u^{-2}u_{\bar{\alpha}\beta }u_{\alpha }u_{%
\bar{\beta}}Q^{k}\phi ^{2}  \label{a3} \\
&&+2k\int_{M}u^{-3}\left\vert u_{\alpha \beta }u_{\bar{\alpha}}\right\vert
^{2}Q^{k-1}\phi ^{2}  \notag \\
&&-2k\int_{M}u^{-3}\left\vert u_{\bar{\alpha}\beta }u_{\alpha }\right\vert
^{2}Q^{k-1}\phi ^{2}  \notag \\
&&-\frac{k}{32}\int_{M}u\left\vert \nabla Q\right\vert ^{2}Q^{k-1}\phi ^{2} 
\notag \\
&&-\frac{k}{8}\int_{M}uQ^{k+2}\phi ^{2}-\frac{k}{8}\int_{M}\left\langle
\nabla Q,\nabla u\right\rangle Q^{k}\phi ^{2}  \notag \\
&&-\int_{M}u^{-1}\mathrm{Re}\left( u_{\bar{\alpha}\beta }u_{\alpha }\left(
\phi ^{2}\right) _{\bar{\beta}}\right) Q^{k}.  \notag
\end{eqnarray}

Integrating by parts, we get that 
\begin{eqnarray*}
-\frac{k}{8}\int_{M}\left\langle \nabla Q,\nabla u\right\rangle Q^{k}\phi
^{2} &=&-\frac{k}{8\left( k+1\right) }\int_{M}\left\langle \nabla
Q^{k+1},\nabla u\right\rangle \phi ^{2} \\
&=&\frac{k}{8\left( k+1\right) }\int_{M}\left\langle \nabla u,\nabla \phi
^{2}\right\rangle Q^{k+1}.
\end{eqnarray*}%
Using this in (\ref{a3}), we conclude that 
\begin{eqnarray}
&&\int_{M}u^{-1}\left\vert u_{\alpha \bar{\beta}}\right\vert ^{2}Q^{k}\phi
^{2}+\frac{k}{32}\int_{M}u\left\vert \nabla Q\right\vert ^{2}Q^{k-1}\phi ^{2}
\label{a4} \\
&=&\left( 2k+1\right) \int_{M}u^{-2}u_{\bar{\alpha}\beta }u_{\alpha }u_{\bar{%
\beta}}Q^{k}\phi ^{2}+2k\int_{M}u^{-3}\left\vert u_{\alpha \beta }u_{\bar{%
\alpha}}\right\vert ^{2}Q^{k-1}\phi ^{2}  \notag \\
&&-2k\int_{M}u^{-3}\left\vert u_{\bar{\alpha}\beta }u_{\alpha }\right\vert
^{2}Q^{k-1}\phi ^{2}-\frac{k}{8}\int_{M}uQ^{k+2}\phi ^{2}+\mathcal{F}_{1}(k), 
\notag
\end{eqnarray}%
where%
\begin{equation*}
\mathcal{F}_{1}(k)=\frac{k}{8\left( k+1\right) }\int_{M}\left\langle \nabla
u,\nabla \phi ^{2}\right\rangle Q^{k+1}-\int_{M}u^{-1}\mathrm{Re}\left( u_{%
\bar{\alpha}\beta }u_{\alpha }\left( \phi ^{2}\right) _{\bar{\beta}}\right)
Q^{k}.
\end{equation*}%
We now proceed similarly and compute 
\begin{eqnarray*}
\int_{M}u^{-1}\left\vert u_{\alpha \beta }\right\vert ^{2}Q^{k}\phi ^{2}
&=&-\int_{M}u_{\bar{\alpha}}\left( u^{-1}u_{\alpha \beta }Q^{k}\phi
^{2}\right) _{\bar{\beta}} \\
&=&-\int_{M}u^{-1}u_{\alpha \beta \bar{\beta}}u_{\bar{\alpha}}Q^{k}\phi
^{2}+\int_{M}u^{-2}u_{\alpha \beta }u_{\bar{\alpha}}u_{\bar{\beta}}Q^{k}\phi
^{2} \\
&&-k\int_{M}u^{-1}u_{\alpha \beta }u_{\bar{\alpha}}Q_{\bar{\beta}%
}Q^{k-1}\phi ^{2}-\int_{M}u^{-1}u_{\alpha \beta }u_{\bar{\alpha}}\left( \phi
^{2}\right) _{\bar{\beta}}Q^{k}.
\end{eqnarray*}%
Note that by Ricci identities,%
\begin{eqnarray*}
-u_{\alpha \beta \bar{\beta}}u_{\bar{\alpha}} &=&-u_{\beta \bar{\beta}\alpha
}u_{\bar{\alpha}}-R_{\alpha \bar{\beta}}u_{\bar{\alpha}}u_{\beta } \\
&\leq &\frac{m+1}{4}\left\vert \nabla u\right\vert ^{2}.
\end{eqnarray*}%
Hence, we get 
\begin{eqnarray*}
\int_{M}u^{-1}\left\vert u_{\alpha \beta }\right\vert ^{2}Q^{k}\phi ^{2}
&\leq &\frac{m+1}{4}\int_{M}uQ^{k+1}\phi ^{2} \\
&&+\int_{M}u^{-2}\mathrm{Re}\left( u_{\alpha \beta }u_{\bar{\alpha}}u_{\bar{%
\beta}}\right) Q^{k}\phi ^{2} \\
&&-k\int_{M}u^{-1}\mathrm{Re}\left( u_{\alpha \beta }u_{\bar{\alpha}}Q_{\bar{%
\beta}}\right) Q^{k-1}\phi ^{2} \\
&&-\int_{M}u^{-1}\mathrm{Re}\left( u_{\alpha \beta }u_{\bar{\alpha}}\left(
\phi ^{2}\right) _{\bar{\beta}}\right) Q^{k}.
\end{eqnarray*}%
By (\ref{a2''}) it follows that 
\begin{eqnarray}
&&\int_{M}u^{-1}\left\vert u_{\alpha \beta }\right\vert ^{2}Q^{k}\phi ^{2}+%
\frac{k}{32}\int_{M}u\left\vert \nabla Q\right\vert ^{2}Q^{k-1}\phi ^{2}
\label{a5} \\
&\leq &\frac{m+1}{4}\int_{M}uQ^{k+1}\phi ^{2}+\left( 2k+1\right)
\int_{M}u^{-2}\mathrm{Re}\left( u_{\alpha \beta }u_{\bar{\alpha}}u_{\bar{%
\beta}}\right) Q^{k}\phi ^{2}  \notag \\
&&+2k\int_{M}u^{-3}\left\vert u_{\bar{\alpha}\beta }u_{\alpha }\right\vert
^{2}Q^{k-1}\phi ^{2}-2k\int_{M}u^{-3}\left\vert u_{\alpha \beta }u_{\bar{%
\alpha}}\right\vert ^{2}Q^{k-1}\phi ^{2}  \notag \\
&&-\frac{k}{8}\int_{M}uQ^{k+2}\phi ^{2}+\frac{k}{8\left( k+1\right) }%
\int_{M}\left\langle \nabla u,\nabla \phi ^{2}\right\rangle Q^{k+1}  \notag
\\
&&-\int_{M}u^{-1}\mathrm{Re}\left( u_{\alpha \beta }u_{\bar{\alpha}}\left(
\phi ^{2}\right) _{\bar{\beta}}\right) Q^{k}.  \notag
\end{eqnarray}%
Finally, from 
\begin{equation*}
u_{\alpha \beta }u_{\bar{\alpha}}+u_{\bar{\alpha}\beta }u_{\alpha }=\frac{1}{%
4}u^{2}Q_{\beta }+\frac{1}{2}uQu_{\beta }
\end{equation*}%
we obtain that 
\begin{eqnarray*}
\mathrm{Re}\left( u_{\alpha \beta }u_{\bar{\alpha}}u_{\bar{\beta}}\right)
&=&-u_{\bar{\alpha}\beta }u_{\alpha }u_{\bar{\beta}} \\
&&+\frac{1}{16}u^{2}\left\langle \nabla Q,\nabla u\right\rangle +\frac{1}{8}%
u^{3}Q^{2}.
\end{eqnarray*}%
Hence, we get 
\begin{eqnarray}
&&\left( 2k+1\right) \int_{M}u^{-2}\mathrm{Re}\left( u_{\alpha \beta }u_{%
\bar{\alpha}}u_{\bar{\beta}}\right) Q^{k}\phi ^{2}  \label{a6'} \\
&=&-\left( 2k+1\right) \int_{M}u^{-2}u_{\bar{\alpha}\beta }u_{\alpha }u_{%
\bar{\beta}}Q^{k}\phi ^{2}  \notag \\
&&-\frac{2k+1}{16\left( k+1\right) }\int_{M}Q^{k+1}\left\langle \nabla
u,\nabla \phi ^{2}\right\rangle  \notag \\
&&+\frac{2k+1}{8}\int_{M}uQ^{k+2}\phi ^{2}.  \notag
\end{eqnarray}%
By (\ref{a5}) we conclude that 
\begin{eqnarray}
&&\int_{M}u^{-1}\left\vert u_{\alpha \beta }\right\vert ^{2}Q^{k}\phi ^{2}+%
\frac{k}{32}\int_{M}u\left\vert \nabla Q\right\vert ^{2}Q^{k-1}\phi ^{2}
\label{a6} \\
&\leq &\frac{m+1}{4}\int_{M}uQ^{k+1}\phi ^{2}-\left( 2k+1\right)
\int_{M}u^{-2}u_{\bar{\alpha}\beta }u_{\alpha }u_{\bar{\beta}}Q^{k}\phi ^{2}
\notag \\
&&+2k\int_{M}u^{-3}\left\vert u_{\bar{\alpha}\beta }u_{\alpha }\right\vert
^{2}Q^{k-1}\phi ^{2}-2k\int_{M}u^{-3}\left\vert u_{\alpha \beta }u_{\bar{%
\alpha}}\right\vert ^{2}Q^{k-1}\phi ^{2}  \notag \\
&&+\frac{k+1}{8}\int_{M}uQ^{k+2}\phi ^{2}+\mathcal{F}_{2}(k),  \notag
\end{eqnarray}

where 
\begin{equation*}
\mathcal{F}_{2}(k)=-\frac{1}{16\left( k+1\right) }\int_{M}\left\langle \nabla
u,\nabla \phi ^{2}\right\rangle Q^{k+1}-\int_{M}u^{-1}\mathrm{Re}\left(
u_{\alpha \beta }u_{\bar{\alpha}}\left( \phi ^{2}\right) _{\bar{\beta}%
}\right) Q^{k}.
\end{equation*}%
Recall that in (\ref{a4}) we proved the following: 
\begin{eqnarray}
&&\int_{M}u^{-1}\left\vert u_{\alpha \bar{\beta}}\right\vert ^{2}Q^{k}\phi
^{2}+\frac{k}{32}\int_{M}u\left\vert \nabla Q\right\vert ^{2}Q^{k-1}\phi ^{2}
\label{a7} \\
&=&\left( 2k+1\right) \int_{M}u^{-2}u_{\bar{\alpha}\beta }u_{\alpha }u_{\bar{%
\beta}}Q^{k}\phi ^{2}+2k\int_{M}u^{-3}\left\vert u_{\alpha \beta }u_{\bar{%
\alpha}}\right\vert ^{2}Q^{k-1}\phi ^{2}  \notag \\
&&-2k\int_{M}u^{-3}\left\vert u_{\bar{\alpha}\beta }u_{\alpha }\right\vert
^{2}Q^{k-1}\phi ^{2}-\frac{1}{8}k\int_{M}uQ^{k+2}\phi ^{2}+\mathcal{F}_{1}(k), 
\notag
\end{eqnarray}%
where%
\begin{equation*}
\mathcal{F}_{1}(k)=\frac{k}{8\left( k+1\right) }\int_{M}\left\langle \nabla
u,\nabla \phi ^{2}\right\rangle Q^{k+1}-\int_{M}u^{-1}\mathrm{Re}\left( u_{%
\bar{\alpha}\beta }u_{\alpha }\left( \phi ^{2}\right) _{\bar{\beta}}\right)
Q^{k}.
\end{equation*}

Adding (\ref{a6}) and (\ref{a7}) implies that 
\begin{eqnarray}
&&\int_{M}u^{-1}\left\vert u_{\alpha \bar{\beta}}\right\vert ^{2}Q^{k}\phi
^{2}+\int_{M}u^{-1}\left\vert u_{\alpha \beta }\right\vert ^{2}Q^{k}\phi ^{2}
\label{a9} \\
&&+\frac{k}{16}\int_{M}u\left\vert \nabla Q\right\vert ^{2}Q^{k-1}\phi ^{2} 
\notag \\
&\leq &\frac{m+1}{4}\int_{M}uQ^{k+1}\phi ^{2}+\frac{1}{8}\int_{M}uQ^{k+2}%
\phi ^{2}  \notag \\
&&+\mathcal{F}_{1}(k)+\mathcal{F}_{2}(k).  \notag
\end{eqnarray}%
Note that\ (\ref{a9}) is exactly the identity that one gets by multiplying
the (Riemannian) Bochner formula 
\begin{equation*}
\frac{1}{2}\Delta \left\vert \nabla u\right\vert ^{2}=\left\vert
u_{ij}\right\vert ^{2}+\mathrm{Ric}\left( \nabla u,\nabla u\right)
\end{equation*}%
by $u^{-1}Q^{k}\phi ^{2}$ and integrating it on $M$. However, inspired by (%
\ref{Model}), we will use different estimates for (\ref{a6}) and (\ref{a7}).

Note that we have the following inequalities%
\begin{equation}
\left\vert u_{\alpha \beta }u_{\bar{\alpha}}\right\vert ^{2}\leq \frac{1}{4}%
\left\vert u_{\alpha \beta }\right\vert ^{2}\left\vert \nabla u\right\vert
^{2}  \label{a12}
\end{equation}%
and 
\begin{eqnarray}
0 &\leq &u^{-1}\left\vert u_{\bar{\alpha}\beta }-u^{-1}u_{\bar{\alpha}%
}u_{\beta }+\frac{1}{4m}Qg_{\bar{\alpha}\beta }u\right\vert ^{2}  \label{a13}
\\
&=&u^{-1}\left\vert u_{\bar{\alpha}\beta }\right\vert ^{2}-2u^{-2}u_{\bar{%
\alpha}\beta }u_{\alpha }u_{\bar{\beta}}+\frac{m-1}{16m}uQ^{2}.  \notag
\end{eqnarray}

Using (\ref{a12}) we have 
\begin{equation*}
\int_{M}u^{-1}\left\vert u_{\alpha \beta }\right\vert ^{2}Q^{k}\phi ^{2}\geq
4\int_{M}u^{-3}\left\vert u_{\alpha \beta }u_{\bar{\alpha}}\right\vert
^{2}Q^{k-1}\phi ^{2}
\end{equation*}%
and from (\ref{a13}) we get 
\begin{equation}
\int_{M}u^{-1}\left\vert u_{\alpha \bar{\beta}}\right\vert ^{2}Q^{k}\phi
^{2}\geq 2\int_{M}u^{-2}u_{\bar{\alpha}\beta }u_{\alpha }u_{\bar{\beta}%
}Q^{k}\phi ^{2}-\frac{m-1}{16m}\int_{M}uQ^{k+2}\phi ^{2}.  \label{a13'}
\end{equation}

Plugging these two inequalities into (\ref{a9}) implies 
\begin{eqnarray}
&&4\int_{M}u^{-3}\left\vert u_{\alpha \beta }u_{\bar{\alpha}}\right\vert
^{2}Q^{k-1}\phi ^{2}+\frac{k}{16}\int_{M}u\left\vert \nabla Q\right\vert
^{2}Q^{k-1}\phi ^{2}  \label{a14} \\
&\leq &-2\int_{M}u^{-2}u_{\bar{\alpha}\beta }u_{\alpha }u_{\bar{\beta}%
}Q^{k}\phi ^{2}+\frac{m+1}{4}\int_{M}uQ^{k+1}\phi ^{2}  \notag \\
&&+\frac{3m-1}{16m}\int_{M}uQ^{k+2}\phi ^{2}+\mathcal{F}_{1}(k)+\mathcal{F}_{2}(k).
\notag
\end{eqnarray}

We have the following inequality 
\begin{eqnarray*}
0 &\leq &\left\vert u_{\bar{\alpha}\beta }u_{\alpha }-\frac{m-1}{4m}%
uQu_{\beta }\right\vert ^{2} \\
&=&\left\vert u_{\bar{\alpha}\beta }u_{\alpha }\right\vert ^{2}-\frac{m-1}{2m%
}uQu_{\bar{\alpha}\beta }u_{\alpha }u_{\bar{\beta}}+\frac{1}{64}\left( \frac{%
m-1}{m}\right) ^{2}Q^{3}u^{4},
\end{eqnarray*}%
from which we deduce that 
\begin{eqnarray}
&&-2\int_{M}u^{-3}\left\vert u_{\bar{\alpha}\beta }u_{\alpha }\right\vert
^{2}Q^{k-1}\phi ^{2}  \label{a15} \\
&\leq &-\frac{m-1}{m}\int_{M}u^{-2}u_{\bar{\alpha}\beta }u_{\alpha }u_{\bar{%
\beta}}Q^{k}\phi ^{2}  \notag \\
&&+\frac{1}{32}\left( \frac{m-1}{m}\right) ^{2}\int_{M}uQ^{k+2}\phi ^{2}. 
\notag
\end{eqnarray}%
By (\ref{a14}) and (\ref{a15}) we get%
\begin{eqnarray*}
&&2k\int_{M}u^{-3}\left\vert u_{\alpha \beta }u_{\bar{\alpha}}\right\vert
^{2}Q^{k-1}\phi ^{2}-2k\int_{M}u^{-3}\left\vert u_{\bar{\alpha}\beta
}u_{\alpha }\right\vert ^{2}Q^{k-1}\phi ^{2} \\
&\leq &-\frac{k^{2}}{32}\int_{M}u\left\vert \nabla Q\right\vert
^{2}Q^{k-1}\phi ^{2}-\frac{\left( 2m-1\right) k}{m}\int_{M}u^{-2}u_{\bar{%
\alpha}\beta }u_{\alpha }u_{\bar{\beta}}Q^{k}\phi ^{2} \\
&&+\frac{\left( 4m^{2}-3m+1\right) k}{32m^{2}}\int_{M}uQ^{k+2}\phi ^{2}+%
\frac{\left( m+1\right) k}{8}\int_{M}uQ^{k+1}\phi ^{2} \\
&&+\frac{k}{2}\left( \mathcal{F}_{1}(k)+\mathcal{F}_{2}(k)\right) .
\end{eqnarray*}

Using this into (\ref{a7}) we obtain 
\begin{eqnarray}
&&\int_{M}u^{-1}\left\vert u_{\alpha \bar{\beta}}\right\vert ^{2}Q^{k}\phi
^{2}+\frac{k\left( k+1\right) }{32}\int_{M}u\left\vert \nabla Q\right\vert
^{2}Q^{k-1}\phi ^{2}  \label{a16} \\
&\leq &\left( 1+\frac{k}{m}\right) \int_{M}u^{-2}u_{\bar{\alpha}\beta
}u_{\alpha }u_{\bar{\beta}}Q^{k}\phi ^{2}  \notag \\
&&-\frac{\left( 3m-1\right) k}{32m^{2}}\int_{M}uQ^{k+2}\phi ^{2}+\frac{%
\left( m+1\right) k}{8}\int_{M}uQ^{k+1}\phi ^{2}  \notag \\
&&+\left( \frac{k}{2}+1\right) \mathcal{F}_{1}(k)+\frac{k}{2}\mathcal{F}_{2}(k). 
\notag
\end{eqnarray}

Plugging (\ref{a13'}) into (\ref{a16}) it follows that%
\begin{eqnarray}
&&\left( 1-\frac{k}{m}\right) \int_{M}u^{-2}u_{\bar{\alpha}\beta }u_{\alpha
}u_{\bar{\beta}}Q^{k}\phi ^{2}+\frac{k\left( k+1\right) }{32}%
\int_{M}u\left\vert \nabla Q\right\vert ^{2}Q^{k-1}\phi ^{2}  \label{a17} \\
&\leq &\left( \frac{m-1}{16m}-\frac{\left( 3m-1\right) k}{32m^{2}}\right)
\int_{M}uQ^{k+2}\phi ^{2}+\frac{\left( m+1\right) k}{8}\int_{M}uQ^{k+1}\phi
^{2}  \notag \\
&&+\left( \frac{k}{2}+1\right) \mathcal{F}_{1}(k)+\frac{k}{2}\mathcal{F}_{2}(k). 
\notag
\end{eqnarray}%
This holds for any $k\geq 0$. Choosing $k=m$ into (\ref{a17}) implies 
\begin{eqnarray}
\int_{M}uQ^{m+2}\phi ^{2} &\leq &4m^{2}\int_{M}uQ^{m+1}\phi ^{2}  \label{a18}
\\
&&+\frac{16m}{m+1}\left( \left( m+2\right) \mathcal{F}_{1}(m)+m\mathcal{F}
_{2}(m)\right) .  \notag
\end{eqnarray}%
By Young's inequality we have 
\begin{equation*}
4m^{2}Q^{m+1}\leq \frac{m+1}{m+2}Q^{m+2}+\frac{1}{m+2}\left( 4m^{2}\right)
^{m+2}.
\end{equation*}%
In conclusion, (\ref{a18}) implies that 
\begin{equation}
\int_{M}uQ^{m+2}\phi ^{2}\leq \left( 4m^{2}\right) ^{m+2}\int_{M}u\phi ^{2}+%
\frac{16m\left( m+2\right) }{m+1}\mathcal{F},  \label{a19}
\end{equation}

where 
\begin{eqnarray*}
\mathcal{F} &\mathcal{=}&\frac{m\left( 2m+3\right) }{16\left( m+1\right) }%
\int_{M}\left\langle \nabla u,\nabla \phi ^{2}\right\rangle Q^{m+1}-\left(
m+2\right) \int_{M}u^{-1}\mathrm{Re}\left( u_{\bar{\alpha}\beta }u_{\alpha
}\left( \phi ^{2}\right) _{\bar{\beta}}\right) Q^{m} \\
&&-m\int_{M}u^{-1}\mathrm{Re}\left( u_{\alpha \beta }u_{\bar{\alpha}}\left(
\phi ^{2}\right) _{\bar{\beta}}\right) Q^{m}.
\end{eqnarray*}%
We now estimate $\mathcal{F}$ as follows. Recall that (\ref{a9}) proved 
\begin{eqnarray}
&&\int_{M}u^{-1}\left\vert u_{\alpha \bar{\beta}}\right\vert ^{2}Q^{m}\phi
^{2}+\int_{M}u^{-1}\left\vert u_{\alpha \beta }\right\vert ^{2}Q^{m}\phi ^{2}
\label{a20} \\
&&+\frac{m}{16}\int_{M}u\left\vert \nabla Q\right\vert ^{2}Q^{m-1}\phi ^{2} 
\notag \\
&\leq &\frac{m+1}{4}\int_{M}uQ^{m+1}\phi ^{2}+\frac{1}{8}\int_{M}uQ^{m+2}%
\phi ^{2}+\frac{2m-1}{16\left( m+1\right) }\int_{M}\left\langle \nabla
u,\nabla \phi ^{2}\right\rangle Q^{m+1}  \notag \\
&&-\int_{M}u^{-1}\mathrm{Re}\left( u_{\bar{\alpha}\beta }u_{\alpha }\left(
\phi ^{2}\right) _{\bar{\beta}}\right) Q^{m}-\int_{M}u^{-1}\mathrm{Re}\left(
u_{\alpha \beta }u_{\bar{\alpha}}\left( \phi ^{2}\right) _{\bar{\beta}%
}\right) Q^{m}.  \notag
\end{eqnarray}%
By the Cauchy-Schwarz inequality we have 
\begin{eqnarray*}
\int_{M}u^{-1}\left\vert u_{\bar{\alpha}\beta }u_{\alpha }\left( \phi
^{2}\right) _{\bar{\beta}}\right\vert Q^{m} &\leq &\frac{1}{2}%
\int_{M}u^{-1}\left\vert u_{\alpha \bar{\beta}}\right\vert ^{2}Q^{m}\phi ^{2}
\\
&&+\frac{1}{2}\int_{M}u\left\vert \nabla \phi \right\vert ^{2}Q^{m+1},
\end{eqnarray*}%
and 
\begin{eqnarray*}
\int_{M}u^{-1}\left\vert u_{\alpha \beta }u_{\bar{\alpha}}\left( \phi
^{2}\right) _{\bar{\beta}}\right\vert Q^{m} &\leq &\frac{1}{2}%
\int_{M}u^{-1}\left\vert u_{\alpha \beta }\right\vert ^{2}Q^{m}\phi ^{2} \\
&&+\frac{1}{2}\int_{M}u\left\vert \nabla \phi \right\vert ^{2}Q^{m+1}.
\end{eqnarray*}%
By (\ref{a20}) this implies that%
\begin{eqnarray}
&&\frac{1}{2}\int_{M}u^{-1}\left\vert u_{\alpha \bar{\beta}}\right\vert
^{2}Q^{m}\phi ^{2}+\frac{1}{2}\int_{M}u^{-1}\left\vert u_{\alpha \beta
}\right\vert ^{2}Q^{m}\phi ^{2}  \label{a20'} \\
&\leq &\frac{m+1}{4}\int_{M}uQ^{m+1}\phi ^{2}+\frac{1}{8}\int_{M}uQ^{m+2}%
\phi ^{2}  \notag \\
&&+\frac{2m-1}{8\left( m+1\right) }\int_{M}\phi \left\vert \nabla
u\right\vert \left\vert \nabla \phi \right\vert Q^{m+1}+\int_{M}u\left\vert
\nabla \phi \right\vert ^{2}Q^{m+1}.  \notag
\end{eqnarray}%
Using Yau's gradient estimate $Q\leq c\left( m\right) =4m^{2}+2m-2$ it
results that 
\begin{eqnarray}
&&\int_{M}u^{-1}\left\vert u_{\alpha \bar{\beta}}\right\vert ^{2}Q^{m}\phi
^{2}+\int_{M}u^{-1}\left\vert u_{\alpha \beta }\right\vert ^{2}Q^{m}\phi ^{2}
\label{a21} \\
&\leq &c\left( m\right) \int_{M}u\left( \phi ^{2}+\left\vert \nabla \phi
\right\vert ^{2}\right) ,  \notag
\end{eqnarray}%
for some constant $c\left( m\right) $ depending only on dimension $m$. \
Hence, for any $\varepsilon >0$ small enough, we get 
\begin{eqnarray}
&&\int_{M}u^{-1}\left\vert u_{\bar{\alpha}\beta }u_{\alpha }\left( \phi
^{2}\right) _{\bar{\beta}}\right\vert Q^{m}+\int_{M}u^{-1}\left\vert
u_{\alpha \beta }u_{\bar{\alpha}}\left( \phi ^{2}\right) _{\bar{\beta}%
}\right\vert Q^{m}  \label{a22} \\
&\leq &\frac{\varepsilon }{c\left( m\right) }\int_{M}u^{-1}\left\vert
u_{\alpha \bar{\beta}}\right\vert ^{2}Q^{m}\phi ^{2}+\frac{\varepsilon }{%
c\left( m\right) }\int_{M}u^{-1}\left\vert u_{\alpha \beta }\right\vert
^{2}Q^{m}\phi ^{2}  \notag \\
&&+\frac{c\left( m\right) }{\varepsilon }\int_{M}uQ^{m+1}\left\vert \nabla
\phi \right\vert ^{2}  \notag \\
&\leq &\varepsilon \int_{M}u\phi ^{2}+\frac{c\left( m\right) }{\varepsilon }%
\int_{M}u\left\vert \nabla \phi \right\vert ^{2},  \notag
\end{eqnarray}%
where in the last line we used (\ref{a21}).

Using (\ref{a22}) we estimate $\mathcal{F}$ from (\ref{a19}) by 
\begin{equation}
\left\vert \mathcal{F}\right\vert \leq c\left( m\right) \varepsilon
\int_{M}u\phi ^{2}+\frac{c\left( m\right) }{\varepsilon }\int_{M}u\left\vert
\nabla \phi \right\vert ^{2}.  \label{a23}
\end{equation}%
Hence, (\ref{a19}) implies%
\begin{equation}
\int_{M}uQ^{m+2}\phi ^{2}\leq \left( \left( 4m^{2}\right) ^{m+2}+\epsilon
\right) \int_{M}u\phi ^{2}+\frac{c\left( m\right) }{\epsilon }%
\int_{M}u\left\vert \nabla \phi \right\vert ^{2},  \label{a24}
\end{equation}%
where $c\left( m\right) $ depends only on dimension. This proves the theorem
for $p=2\left( m+2\right) $. For $p<2\left( m+2\right) $ this follows
immediately from Young's inequality.
\end{proof}

We now prove that Theorem \ref{Green} can be in fact extended to all values
of $p\geq 2$, provided in addition that $\lambda _{1}\left( M\right) $ is
maximal.

\begin{theorem}
\label{Green1}Let $\left( M,g\right) $ be a complete K\"{a}hler manifold of
complex dimension $m$, with $\mathrm{Ric}\geq -2\left( m+1\right) $. Assume
in addition that $M$ has maximal bottom of spectrum for the Laplacian, 
\begin{equation*}
\lambda _{1}\left( M\right) =m^{2}.
\end{equation*}%
Then any positive harmonic function $u$ satisfies the integral gradient
estimate 
\begin{equation*}
\int_{M}u\left\vert \nabla \ln u\right\vert ^{p}\phi ^{2}\leq \left( \left(
2m\right) ^{p}+\epsilon \right) \int_{M}u\phi ^{2}+\frac{c\left( p,m\right) 
}{\epsilon }\int_{M}u\left\vert \nabla \phi \right\vert ^{2},
\end{equation*}%
for any $p\geq 2$, any $\epsilon >0$, and any $\phi \geq 0$ with compact
support in $M$. Here $c\left( p,m\right) $ depends only on $p$ and $m$.
\end{theorem}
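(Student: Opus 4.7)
The plan is to push the integral gradient estimate of Theorem \ref{Green} beyond $p\le 2(m+2)$ by using the spectral assumption $\lambda_{1}(M)=m^{2}$ to drive an iteration. Writing $J_{k}:=\int_{M}uQ^{k}\phi^{2}$, it suffices to prove
\begin{equation*}
J_{k}\le\bigl((4m^{2})^{k}+\epsilon\bigr)\int_{M}u\phi^{2}+\frac{c(k,m)}{\epsilon}\int_{M}u|\nabla\phi|^{2}
\end{equation*}
for every integer $k\ge 1$, since the statement for arbitrary $p\ge 2$ then follows from the integer case by Young's inequality, as in the last step of the proof of Theorem \ref{Green}. For $k\le m+2$ this is Theorem \ref{Green}, so what is needed is a recursion step advancing $k$ to $k+1$.

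First I would rerun the derivation leading to (\ref{a17}) for arbitrary $k\ge m$, obtaining the Bochner-type inequality
\begin{equation*}
\Big(1-\tfrac{k}{m}\Big)A_{k}+\tfrac{k(k+1)}{32}\int_{M}u|\nabla Q|^{2}Q^{k-1}\phi^{2}\le \Big(\tfrac{m-1}{16m}-\tfrac{(3m-1)k}{32m^{2}}\Big)J_{k+2}+\tfrac{(m+1)k}{8}J_{k+1}+\mathcal{G},
\end{equation*}
where $A_{k}:=\int_{M}u^{-2}u_{\bar\alpha\beta}u_{\alpha}u_{\bar\beta}Q^{k}\phi^{2}$ and $\mathcal{G}$ collects boundary contributions bounded by $\int_{M}u|\nabla\phi|^{2}$. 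For $k>m$ the prefactor $(1-k/m)$ turns negative while $A_{k}$ has no a priori sign, but (\ref{a13'}) supplies $A_{k}\le\tfrac{1}{2}\int_{M}u^{-1}|u_{\alpha\bar\beta}|^{2}Q^{k}\phi^{2}+\tfrac{m-1}{32m}J_{k+2}$, allowing $A_{k}$ to be traded for a Hessian integral and for $J_{k+2}$.

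Next I would invoke the Rayleigh characterization $m^{2}\int_{M}\psi^{2}\le\int_{M}|\nabla\psi|^{2}$ with $\psi=u^{1/2}Q^{k/2}\phi$. Using $|\nabla u|^{2}=u^{2}Q$, expansion of $|\nabla\psi|^{2}$ gives
\begin{equation*}
4m^{2}J_{k}\le J_{k+1}+k^{2}\int_{M}uQ^{k-2}|\nabla Q|^{2}\phi^{2}+\mathcal{H},
\end{equation*}
with $\mathcal{H}$ absorbable into $\int_{M}u|\nabla\phi|^{2}$ and a small multiple of $J_{k+1}$. The crucial feature is that the leading constant is already the sharp value $4m^{2}$. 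Coupling this spectral inequality (applied at exponent $k+1$) with the Bochner line above, using the latter's positive $|\nabla Q|^{2}$ contribution to absorb the spectral one, and applying Young's inequality to the remaining cross pieces, I expect the recursion
\begin{equation*}
J_{k+1}\le\bigl(4m^{2}+\epsilon\bigr)J_{k}+\frac{c(k,m)}{\epsilon}\int_{M}u|\nabla\phi|^{2}
\end{equation*}
to emerge for all $k\ge m+2$. Iterating from the base case $k=m+2$ supplied by Theorem \ref{Green} then completes the proof.

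The main obstacle will be preserving the sharp constant $4m^{2}$ at each step of the recursion: any inefficiency in the Cauchy-Schwarz or Young splittings compounds geometrically in $k$ and would degrade the final bound to something strictly larger than $(2m)^{p}$. The spectral assumption must therefore be used tightly, with both sides of the coupled estimate balancing at leading order and all slack absorbed into $\epsilon$-boundary remainders. A secondary point is the uniform-in-$k$ control of the boundary pieces $\mathcal{G}$ and $\mathcal{H}$, which should follow by adapting the cut-off manipulation of (\ref{a22}) to $Q^{k}$-weighted integrals and using Yau's pointwise bound on $Q$ to handle the extra powers; this is where the explicit $k$-dependence of the constant $c(k,m)$ enters.
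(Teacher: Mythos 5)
Your overall idea---combine a Bochner-type inequality at weight $Q^{k}$ with the Rayleigh quotient $\lambda_{1}(M)=m^{2}$ to drive an iteration in $k$---is in the right spirit, but the specific mechanism you propose does not close, and the reason is the Hessian term $H_{k}:=\int_{M}u^{-2}u_{\bar\alpha\beta}u_{\alpha}u_{\bar\beta}Q^{k}\phi^{2}$, which for $k>m$ enters (\ref{a17}) on the favorable side with the \emph{positive} coefficient $\tfrac{k}{m}-1$ and must therefore be bounded from above. You propose to do this via (\ref{a13'}), which trades $H_{k}$ for $\tfrac12\int u^{-1}|u_{\alpha\bar\beta}|^{2}Q^{k}\phi^{2}+\tfrac{m-1}{32m}J_{k+2}$, and the only available estimate for the remaining Hessian integral is (\ref{a9}), giving $\int u^{-1}|u_{\alpha\bar\beta}|^{2}Q^{k}\phi^{2}\le\tfrac{m+1}{4}J_{k+1}+\tfrac18 J_{k+2}+\cdots$. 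If you feed this bound $H_{k}\lesssim\tfrac{m+1}{8}J_{k+1}+\tfrac{3m-1}{32m}J_{k+2}$ into (\ref{a17}) together with the Rayleigh inequality at exponent $k+1$ (which, after expanding $|\nabla(u^{1/2}Q^{(k+1)/2}\phi)|^{2}$, gives $4m^{2}J_{k+1}\le J_{k+2}+(k+1)^{2}\int uQ^{k-1}|\nabla Q|^{2}\phi^{2}+\cdots$), the weight $\mu$ you may put on the Rayleigh inequality is limited to $\mu\le\tfrac{k}{32(k+1)}$ so that the Bochner $|\nabla Q|^{2}$-term still absorbs the Rayleigh one. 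A direct computation of the resulting $J_{k+2}$ and $J_{k+1}$ coefficients then shows that the ratio is $4m^{2}$ only at $k=m$, and for large $k$ there is no admissible $\mu$ making the ratio $\le 4m^{2}$; the loss is a fixed multiplicative factor in $k$, not an $\epsilon$ you can send to zero. So the recursion $J_{k+1}\le(4m^{2}+\epsilon)J_{k}+\cdots$ that you ``expect to emerge'' does not emerge from this combination.

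The missing idea, which is the heart of the paper's proof, is that one should \emph{not} try to control $H_{k}$ through (\ref{a13'}) and (\ref{a9}), and \emph{not} invoke the spectral assumption at every exponent. Instead, the paper introduces the pointwise inequality $0\le u^{-1}|u^{-1}u_{\alpha\beta}u_{\bar\beta}-m(m+1)u_{\alpha}|^{2}$, combines it with (\ref{a9}), (\ref{a6'}) and (\ref{a13'}) to obtain the linear recursion (\ref{h6}) for the \emph{offset} quantity $A_{k}=H_{k}-\tfrac{3m-1}{32m}J_{k+2}+\tfrac{m(m+1)}{8}J_{k+1}$, namely $A_{k}\le 4m(m+1)A_{k-1}+\mathcal{F}_{0}(k)$, and uses the spectral hypothesis \emph{only once}, at $k=0$ and with the simple test function $u^{1/2}\phi$ (where no $|\nabla Q|^{2}$ term arises), to show $A_{0}$ is negligible. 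Iterating gives $A_{k}$ negligible, i.e.\ the sharp bound $H_{k}\le\tfrac{3m-1}{32m}J_{k+2}-\tfrac{m(m+1)}{8}J_{k+1}+\cdots$, which when substituted into (\ref{a17}) produces exactly $J_{k+2}\le 4m^{2}J_{k+1}+\cdots$ for all $k\ge m$. Note the crucial \emph{negative} coefficient of $J_{k+1}$ in this bound on $H_{k}$: your plan, routed through (\ref{a13'}) and (\ref{a9}), can at best give a nonnegative coefficient, and that is precisely what destroys sharpness. If you try to rebuild the argument, I would therefore focus on discovering the offset quantity $A_{k}$ and its $4m(m+1)$-recursion rather than on re-deploying the Rayleigh quotient at each stage.
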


\begin{proof}
Start with the inequality 
\begin{eqnarray*}
0 &\leq &u^{-1}\left\vert u^{-1}u_{\alpha \beta }u_{\bar{\beta}}-m\left(
m+1\right) u_{\alpha }\right\vert ^{2} \\
&=&u^{-3}\left\vert u_{\alpha \beta }u_{\bar{\beta}}\right\vert
^{2}-2m\left( m+1\right) u^{-2}\mathrm{Re}\left( u_{\alpha \beta }u_{\bar{%
\alpha}}u_{\bar{\beta}}\right) +\frac{1}{4}m^{2}\left( m+1\right) ^{2}uQ \\
&\leq &\frac{1}{4}u^{-1}\left\vert u_{\alpha \beta }\right\vert
^{2}Q-2m\left( m+1\right) u^{-2}\mathrm{Re}\left( u_{\alpha \beta }u_{\bar{%
\alpha}}u_{\bar{\beta}}\right) +\frac{1}{4}m^{2}\left( m+1\right) ^{2}uQ.
\end{eqnarray*}%
This implies that for any $k\geq 1$, 
\begin{eqnarray*}
\int_{M}u^{-1}\left\vert u_{\alpha \beta }\right\vert ^{2}Q^{k}\phi ^{2}
&\geq &8m\left( m+1\right) \int_{M}u^{-2}\mathrm{Re}\left( u_{\alpha \beta
}u_{\bar{\alpha}}u_{\bar{\beta}}\right) Q^{k-1}\phi ^{2} \\
&&-m^{2}\left( m+1\right) ^{2}\int_{M}uQ^{k}\phi ^{2}.
\end{eqnarray*}%
Combining this with (\ref{a9}), we get 
\begin{eqnarray}
&&\int_{M}u^{-1}\left\vert u_{\alpha \bar{\beta}}\right\vert ^{2}Q^{k}\phi
^{2}+\frac{k}{16}\int_{M}u\left\vert \nabla Q\right\vert ^{2}Q^{k-1}\phi ^{2}
\label{h1} \\
&\leq &-8m\left( m+1\right) \int_{M}u^{-2}\mathrm{Re}\left( u_{\alpha \beta
}u_{\bar{\alpha}}u_{\bar{\beta}}\right) Q^{k-1}\phi ^{2}  \notag \\
&&+\frac{1}{8}\int_{M}uQ^{k+2}\phi ^{2}+\frac{m+1}{4}\int_{M}uQ^{k+1}\phi
^{2}+m^{2}\left( m+1\right) ^{2}\int_{M}uQ^{k}\phi ^{2}  \notag \\
&&+\mathcal{F}_{1}(k)+\mathcal{F}_{2}(k).  \notag
\end{eqnarray}%
Recall that by (\ref{a6'}) we have
\begin{eqnarray}
\int_{M}u^{-2}\mathrm{Re}\left( u_{\alpha \beta }u_{\bar{\alpha}}u_{\bar{%
\beta}}\right) Q^{k-1}\phi ^{2} &=&-\int_{M}u^{-2}u_{\bar{\alpha}\beta
}u_{\alpha }u_{\bar{\beta}}Q^{k-1}\phi ^{2}  \label{h3} \\
&&+\frac{1}{8}\int_{M}uQ^{k+1}\phi ^{2}  \notag \\
&&-\frac{1}{16k}\int_{M}Q^{k}\left\langle \nabla u,\nabla \phi
^{2}\right\rangle .  \notag
\end{eqnarray}%
Hence, by (\ref{h1}) and (\ref{h3}) it follows that 
\begin{eqnarray*}
&&\int_{M}u^{-1}\left\vert u_{\alpha \bar{\beta}}\right\vert ^{2}Q^{k}\phi
^{2}+\frac{k}{16}\int_{M}u\left\vert \nabla Q\right\vert ^{2}Q^{k-1}\phi ^{2}
\\
&\leq &8m\left( m+1\right) \int_{M}u^{-2}u_{\bar{\alpha}\beta }u_{\alpha }u_{%
\bar{\beta}}Q^{k-1}\phi ^{2} \\
&&+\frac{1}{8}\int_{M}uQ^{k+2}\phi ^{2}+\left( \frac{m+1}{4}-m\left(
m+1\right) \right) \int_{M}uQ^{k+1}\phi ^{2} \\
&&+m^{2}\left( m+1\right) ^{2}\int_{M}uQ^{k}\phi ^{2} \\
&&+\mathcal{F}_{1}(k)+\mathcal{F}_{2}(k)+\frac{m\left( m+1\right) }{2k}%
\int_{M}Q^{k}\left\langle \nabla u,\nabla \phi ^{2}\right\rangle .
\end{eqnarray*}%
Finally, combining this with (\ref{a13'}) implies 
\begin{eqnarray}
&&\int_{M}u^{-2}u_{\bar{\alpha}\beta }u_{\alpha }u_{\bar{\beta}}Q^{k}\phi
^{2}  \label{h4} \\
&\leq &4m\left( m+1\right) \int_{M}u^{-2}u_{\bar{\alpha}\beta }u_{\alpha }u_{%
\bar{\beta}}Q^{k-1}\phi ^{2}  \notag \\
&&+\frac{3m-1}{32m}\int_{M}uQ^{k+2}\phi ^{2}-\frac{m+1}{2}\left( m-\frac{1}{4%
}\right) \int_{M}uQ^{k+1}\phi ^{2}  \notag \\
&&+\frac{1}{2}m^{2}\left( m+1\right) ^{2}\int_{M}uQ^{k}\phi ^{2}  \notag \\
&&+\mathcal{F}_{0}\left( k\right) ,  \notag
\end{eqnarray}%
where 
\begin{equation*}
\mathcal{F}_{0}\left( k\right) =\frac{1}{2}\mathcal{F}_{1}(k)+\frac{1}{2}%
\mathcal{F}_{2}(k)+\frac{m\left( m+1\right) }{4k}\int_{M}Q^{k}\left\langle
\nabla u,\nabla \phi ^{2}\right\rangle ,
\end{equation*}%
and $\mathcal{F}_{1}(k),\mathcal{F}_{2}(k)$ are specified in (\ref{a4}) and (\ref%
{a6}), respectively. Denote with 
\begin{eqnarray}
A_{k} &=&\int_{M}u^{-2}u_{\bar{\alpha}\beta }u_{\alpha }u_{\bar{\beta}%
}Q^{k}\phi ^{2}-\frac{3m-1}{32m}\int_{M}uQ^{k+2}\phi ^{2}  \label{h5} \\
&&+\frac{1}{8}m\left( m+1\right) \int_{M}uQ^{k+1}\phi ^{2}.  \notag
\end{eqnarray}%
We now observe that (\ref{h4}) is equivalent to 
\begin{equation}
A_{k}\leq 4m\left( m+1\right) A_{k-1}+\mathcal{F}_{0}\left( k\right) ,
\label{h6}
\end{equation}%
for any $k\geq 1$. We estimate $\mathcal{F}_{0}\left( k\right) $ as in the
proof of Theorem \ref{Green}. Note that 
\begin{eqnarray}
\mathcal{F}_{0}\left( k\right) &=&\frac{2k-1}{32\left( k+1\right) }%
\int_{M}\left\langle \nabla u,\nabla \phi ^{2}\right\rangle Q^{k+1}
\label{h7} \\
&&+\frac{m\left( m+1\right) }{4k}\int_{M}\left\langle \nabla u,\nabla \phi
^{2}\right\rangle Q^{k}  \notag \\
&&-\frac{1}{2}\int_{M}u^{-1}\mathrm{Re}\left( u_{\bar{\alpha}\beta
}u_{\alpha }\left( \phi ^{2}\right) _{\bar{\beta}}\right) Q^{k}  \notag \\
&&-\frac{1}{2}\int_{M}u^{-1}\mathrm{Re}\left( u_{\alpha \beta }u_{\bar{\alpha%
}}\left( \phi ^{2}\right) _{\bar{\beta}}\right) Q^{k}.  \notag
\end{eqnarray}%
As in (\ref{a21}) we get 
\begin{eqnarray*}
&&\int_{M}u^{-1}\left\vert u_{\alpha \bar{\beta}}\right\vert ^{2}Q^{k}\phi
^{2}+\int_{M}u^{-1}\left\vert u_{\alpha \beta }\right\vert ^{2}Q^{k}\phi ^{2}
\\
&\leq &c\left( k,m\right) \int_{M}u\left( \phi ^{2}+\left\vert \nabla \phi
\right\vert ^{2}\right) ,
\end{eqnarray*}%
which yields similarly to (\ref{a23}) that 
\begin{equation*}
\left\vert \mathcal{F}_{0}\left( k\right) \right\vert \leq \varepsilon
\int_{M}u\phi ^{2}+\frac{c\left( k,m\right) }{\varepsilon }%
\int_{M}u\left\vert \nabla \phi \right\vert ^{2}
\end{equation*}%
Using this in (\ref{h6}) implies that 
\begin{equation*}
A_{k}\leq 4m\left( m+1\right) A_{k-1}+\varepsilon \int_{M}u\phi ^{2}+\frac{%
c\left( k,m\right) }{\varepsilon }\int_{M}u\left\vert \nabla \phi
\right\vert ^{2},
\end{equation*}%
for all $k\geq 1$. Iterating from $k=1,2,3...$ we obtain 
\begin{equation}
A_{k}\leq \left( 4m\left( m+1\right) \right) ^{k}A_{0}+c_{1}\left(
k,m\right) \varepsilon \int_{M}u\phi ^{2}+\frac{c_{1}\left( k,m\right) }{%
\varepsilon }\int_{M}u\left\vert \nabla \phi \right\vert ^{2},  \label{h8}
\end{equation}%
where $c_{1}\left( k,m\right) $ depends only on $k$ and dimension $m$. By (%
\ref{h5}) we see that 
\begin{equation*}
A_{0}=\int_{M}u^{-2}u_{\bar{\alpha}\beta }u_{\alpha }u_{\bar{\beta}}\phi
^{2}-\frac{3m-1}{32m}\int_{M}uQ^{2}\phi ^{2}+\frac{1}{8}m\left( m+1\right)
\int_{M}uQ\phi ^{2}.
\end{equation*}%
Using (\ref{a17}) for $k=0$ it follows that 
\begin{eqnarray}
A_{0} &\leq &-\frac{m+1}{32m}\int_{M}uQ^{2}\phi ^{2}+\frac{1}{8}m\left(
m+1\right) \int_{M}uQ\phi ^{2}  \label{h9} \\
&&+\varepsilon \int_{M}u\phi ^{2}+\frac{c\left( m\right) }{\varepsilon }%
\int_{M}u\left\vert \nabla \phi \right\vert ^{2}.  \notag
\end{eqnarray}%
We now use the assumption that $\lambda _{1}\left( M\right) =m^{2}$ and
obtain%
\begin{eqnarray}
m^{2}\int_{M}u\phi ^{2} &\leq &\int_{M}\left\vert \nabla \left( u^{\frac{1}{2%
}}\phi \right) \right\vert ^{2}  \label{h10} \\
&=&\frac{1}{4}\int_{M}u^{-1}\left\vert \nabla u\right\vert ^{2}\phi ^{2}+%
\frac{1}{2}\int_{M}\left\langle \nabla u,\nabla \phi ^{2}\right\rangle
+\int_{M}u\left\vert \nabla \phi \right\vert ^{2}  \notag \\
&=&\frac{1}{4}\int_{M}uQ\phi ^{2}+\int_{M}u\left\vert \nabla \phi
\right\vert ^{2}.  \notag
\end{eqnarray}%
On the other hand, we have 
\begin{equation*}
\int_{M}uQ\phi ^{2}\leq \frac{1}{8m^{2}}\int_{M}uQ^{2}\phi
^{2}+2m^{2}\int_{M}u\phi ^{2},
\end{equation*}%
which combined with (\ref{h10}) implies 
\begin{equation*}
\int_{M}uQ\phi ^{2}\leq \frac{1}{4m^{2}}\int_{M}uQ^{2}\phi
^{2}+4\int_{M}u\left\vert \nabla \phi \right\vert ^{2}.
\end{equation*}%
Using this in (\ref{h9}) yields 
\begin{equation}
A_{0}\leq \varepsilon \int_{M}u\phi ^{2}+\frac{c\left( m\right) }{%
\varepsilon }\int_{M}u\left\vert \nabla \phi \right\vert ^{2},  \label{h11}
\end{equation}%
for some constant $c$ depending only on dimension. By (\ref{h8}) and (\ref%
{h11}) we conclude 
\begin{equation*}
A_{k}\leq c_{2}\left( k,m\right) \varepsilon \int_{M}u\phi ^{2}+\frac{%
c_{2}\left( k,m\right) }{\varepsilon }\int_{M}u\left\vert \nabla \phi
\right\vert ^{2},
\end{equation*}%
where $c_{2}\left( k,m\right) $ depends only on $k$ and dimension $m$.
Hence, by (\ref{h5}) we have proved that 
\begin{eqnarray}
&&\int_{M}u^{-2}u_{\bar{\alpha}\beta }u_{\alpha }u_{\bar{\beta}}Q^{k}\phi
^{2}  \label{h12} \\
&\leq &\frac{3m-1}{32m}\int_{M}uQ^{k+2}\phi ^{2}-\frac{1}{8}m\left(
m+1\right) \int_{M}uQ^{k+1}\phi ^{2}  \notag \\
&&+c_{2}\left( k,m\right) \varepsilon \int_{M}u\phi ^{2}+\frac{c_{2}\left(
k,m\right) }{\varepsilon }\int_{M}u\left\vert \nabla \phi \right\vert ^{2}, 
\notag
\end{eqnarray}

for any $k\geq 0$. \ Recall that by (\ref{a17}) we have 
\begin{eqnarray*}
\left( \frac{\left( 3m-1\right) k}{32m^{2}}-\frac{m-1}{16m}\right)
\int_{M}uQ^{k+2}\phi ^{2} &\leq &\left( \frac{k}{m}-1\right)
\int_{M}u^{-2}u_{\bar{\alpha}\beta }u_{\alpha }u_{\bar{\beta}}Q^{k}\phi ^{2}
\\
&&+\frac{\left( m+1\right) k}{8}\int_{M}uQ^{k+1}\phi ^{2} \\
&&+c_{3}\left( k,m\right) \varepsilon \int_{M}u\phi ^{2}+\frac{c_{3}\left(
k,m\right) }{\varepsilon }\int_{M}u\left\vert \nabla \phi \right\vert ^{2}.
\end{eqnarray*}%
Combining with (\ref{h12}) it follows that for $k\geq m,$%
\begin{eqnarray*}
\int_{M}uQ^{k+2}\phi ^{2} &\leq &4m^{2}\int_{M}uQ^{k+1}\phi ^{2} \\
&&+c\left( k,m\right) \varepsilon \int_{M}u\phi ^{2}+\frac{c\left(
k,m\right) }{\varepsilon }\int_{M}u\left\vert \nabla \phi \right\vert ^{2},
\end{eqnarray*}%
where $c\left( k,m\right) $ depends only on $k$ and dimension $m.$ This
implies the desired result.
\end{proof}

\section{Spectrum of $p$-Laplacian\label{S2}}

As an application of the integral estimate, we prove a sharp upper bound for
the bottom of the spectrum of 
\begin{equation*}
\Delta _{p}u=\mathrm{div}\left( \left\vert \nabla u\right\vert ^{p-2}\nabla
u\right) \text{.}
\end{equation*}%
It is known that this satisfies 
\begin{equation}
\lambda _{1,p}\left( M\right) \leq \frac{\int_{M}\left\vert \nabla \psi
\right\vert ^{p}}{\int_{M}\psi ^{p}},  \label{b0}
\end{equation}%
for any $\psi \geq 0$ with compact support in $M$. Hence, to obtain an upper
bound for $\lambda _{1,p}\left( M\right) $ we will apply (\ref{b0}) to a
carefully chosen test function $\psi $. For this, let us recall some
relation between $\lambda _{1,p}\left( M\right) $ for $p\geq 2$ and $\lambda
_{1}\left( M\right) =\lambda _{1,2}\left( M\right) $. First, observe that
for any $\phi \ge 0 $ with compact support in $M$, 
\begin{eqnarray*}
\lambda _{1}\left( M\right) \int_{M}\phi ^{p} &=&\lambda _{1}\left( M\right)
\int_{M}\left( \phi ^{\frac{p}{2}}\right) ^{2} \\
&\leq &\int_{M}\left\vert \nabla \phi ^{\frac{p}{2}}\right\vert ^{2} \\
&=&\frac{p^{2}}{4}\int_{M}\left\vert \nabla \phi \right\vert ^{2}\phi ^{p-2}
\\
&\leq &\frac{p^{2}}{4}\left( \int_{M}\left\vert \nabla \phi \right\vert
^{p}\right) ^{\frac{2}{p}}\left( \int_{M}\phi ^{p}\right) ^{\frac{p-2}{p}}.
\end{eqnarray*}%
This proves that 
\begin{equation*}
\left( \frac{4}{p^{2}}\lambda _{1}\left( M\right) \right) ^{\frac{p}{2}%
}\int_{M}\phi ^{p}\leq \int_{M}\left\vert \nabla \phi \right\vert ^{p},
\end{equation*}%
for any $\phi \ge 0$ with compact support in $M$. Hence 
\begin{equation}
\lambda _{1,p}\left( M\right) \geq \left( \frac{4}{p^{2}}\lambda _{1}\left(
M\right) \right) ^{\frac{p}{2}}.  \label{b1}
\end{equation}%
According to a result of Sung-Wang, it is possible to obtain a reversed
inequality, but which is not sharp anymore. By (3.8) in \cite{SW} we know
that 
\begin{equation}
\frac{p}{2}\lambda _{1,p}\left( M\right) \int_{M}\phi ^{2}\leq
\int_{M}\left\vert \nabla \ln v\right\vert ^{p-2}\left\vert \nabla \phi
\right\vert ^{2},  \label{b2}
\end{equation}%
for any $\phi $ with compact support, where $v>0$ is a positive
eigenfunction of the $p$ Laplacian, $\mathrm{div}\left( \left\vert \nabla
v\right\vert ^{p-2}\nabla v\right) =-\lambda _{1,p}\left( M\right) v^{p-1}$.
According to Theorem 2.2 in \cite{SW}, on a complete manifold with \textrm{%
Ric}$\geq -2\left( m+1\right) $, we have a gradient estimate 
\begin{equation*}
\left\vert \nabla \ln v\right\vert \leq \sigma ,
\end{equation*}%
where $\sigma $ is the first positive root of the equation 
\begin{equation}
F\left( \sigma \right) :=\left( p-1\right) \sigma ^{p}-\sqrt{2\left(
m+1\right) \left( 2m-1\right) }\sigma ^{p-1}+\lambda _{1,p}\left( M\right)
=0.  \label{b3}
\end{equation}%
It is easy to see that 
\begin{eqnarray*}
F\left( \frac{1}{p}\sqrt{2\left( m+1\right) \left( 2m-1\right) }\right)
&=&\lambda _{1,p}\left( M\right) -\left( \frac{1}{p}\sqrt{2\left( m+1\right)
\left( 2m-1\right) }\right) ^{p} \\
&\leq &0,
\end{eqnarray*}%
where the last line follows applying (\ref{MP}) for the Ricci curvature
bound $\mathrm{Ric}\geq -2\left( m+1\right) $. This implies 
\begin{equation}
\sigma \leq \frac{\sqrt{2\left( m+1\right) \left( 2m-1\right) }}{p}.
\label{b3'}
\end{equation}%
By (\ref{b2}) and (\ref{b3'}) we conclude that 
\begin{equation}
\lambda _{1}\left( M\right) \geq \frac{p}{2}\lambda _{1,p}\left( M\right)
\left( \frac{p}{\sqrt{2\left( m+1\right) \left( 2m-1\right) }}\right) ^{p-2}.
\label{b4}
\end{equation}

From here we infer in particular that $\lambda _{1,p}\left( M\right) >0$
implies $\lambda _{1}\left( M\right) >0$. It is known that a manifold with
positive bottom of spectrum is non-parabolic, so it admits a positive
minimal Green's function $G$ for the Laplacian. The Green's function $%
G\left( x_{0},x\right) $ with a pole at $x_{0}$ is harmonic on $M\backslash
\left\{ x_{0}\right\} $ and will be used as a test function in (\ref{b0}).
We will prove the following result.

\begin{theorem}
\label{E}Let $\left( M,g\right) $ be a K\"{a}hler manifold of complex
dimension $m$, with $\mathrm{Ric}\geq -2\left( m+1\right) $. Then the bottom
spectrum $\lambda _{1,p}\left( M\right) $ of the $p$-Laplacian is bounded by 
\begin{equation*}
\lambda _{1,p}\left( M\right) \leq \left( \frac{2m}{p}\right) ^{p},
\end{equation*}%
for any $2\leq p\leq 2m+4$.
\end{theorem}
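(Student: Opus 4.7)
The plan is to test the variational characterization \eqref{b0} with a function built from a positive harmonic function and a suitable cutoff, and then to apply the integral gradient estimate of Theorem \ref{Green}. I may assume $\lambda_{1,p}(M) > 0$, since otherwise there is nothing to prove. Then \eqref{b4} forces $\lambda_1(M) > 0$, so $M$ is non-parabolic and admits a positive minimal Green's function $G = G(x_0,\cdot)$, which is harmonic on $M \setminus \{x_0\}$ and will serve as my base function.

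For a non-negative Lipschitz cutoff $\phi$ with compact support in $M \setminus \{x_0\}$, I put $\psi = G^{1/p}\phi$, so $\psi^p = G\phi^p$ and
\[
\nabla \psi = \frac{1}{p}G^{1/p-1}\phi\,\nabla G + G^{1/p}\nabla \phi.
\]
The elementary inequality $|X+Y|^p \le (1+\delta)|X|^p + C(\delta,p)|Y|^p$, combined with $|\nabla G|^p = G^p|\nabla \ln G|^p$, gives
\[
|\nabla \psi|^p \le \frac{1+\delta}{p^p}\,G\phi^p|\nabla \ln G|^p + C(\delta,p)\,G|\nabla \phi|^p.
\]
Substituting into \eqref{b0} reduces the theorem to controlling $\int_M G\phi^p|\nabla \ln G|^p$ and the cutoff error $\int_M G|\nabla \phi|^p$, both relative to $\int_M G\phi^p$.

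For the main integral I apply Theorem \ref{Green} with the non-negative compactly supported cutoff $\phi^{p/2}$, which is admissible because $p \ge 2$. Noting that $|\nabla \phi^{p/2}|^2 = \frac{p^2}{4}\phi^{p-2}|\nabla \phi|^2$ and that $p \le 2m+4$ lies in the range of Theorem \ref{Green},
\[
\int_M G|\nabla \ln G|^p\phi^p \le \bigl((2m)^p + \epsilon\bigr)\int_M G\phi^p + \frac{C(m,p)}{\epsilon}\int_M G\phi^{p-2}|\nabla \phi|^2.
\]
By H\"older's inequality with conjugate exponents $p/(p-2)$ and $p/2$,
\[
\int_M G\phi^{p-2}|\nabla \phi|^2 \le \Bigl(\int_M G\phi^p\Bigr)^{(p-2)/p}\Bigl(\int_M G|\nabla \phi|^p\Bigr)^{2/p},
\]
so every error term is controlled by the single ratio $r(\phi) := \int_M G|\nabla \phi|^p \big/ \int_M G\phi^p$. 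Putting these together,
\[
\lambda_{1,p}(M) \le \frac{(1+\delta)\bigl((2m)^p + \epsilon\bigr)}{p^p} + A(\delta,\epsilon,m,p)\bigl(r(\phi) + r(\phi)^{2/p}\bigr).
\]

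The final step is to exhibit a sequence of cutoffs $\phi_R$ along which $r(\phi_R) \to 0$. I take Lipschitz annular cutoffs that are fixed near the pole $x_0$, equal to $1$ on a large middle region, with $|\nabla \phi_R| \le C/R$ on the outer annulus $B_{2R}(x_0) \setminus B_R(x_0)$. Verifying $r(\phi_R)\to 0$ is the main technical obstacle; it reflects the non-parabolic behavior of $G$ at infinity under $\mathrm{Ric}\ge -2(m+1)$, and should be handled in the same spirit as the $p=2$ argument of \cite{M}. Once this is in place, sending $R \to \infty$ and then $\epsilon,\delta \to 0$ yields $\lambda_{1,p}(M) \le (2m/p)^p$. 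The upper restriction $p\le 2m+4$ is inherited directly from Theorem \ref{Green}.
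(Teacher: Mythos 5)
Your approach is the same as the paper's: set $\psi = G^{1/p}\phi$ in (\ref{b0}), apply Theorem \ref{Green} to the positive harmonic function $G$ on $M\setminus\{x_0\}$, and exhaust $M$ with annular cutoffs $\phi_R$. The $(1+\delta)$-splitting of $|\nabla\psi|^p$ and the $r(\phi)$ bookkeeping are cosmetic variants of the paper's binomial expansion and its contradiction inequality $\varepsilon^2\int_M G\phi^p\le c\int_M G|\nabla\phi|$.

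The step you leave open, $r(\phi_R)\to 0$, is where the proof actually requires an external input, and pointing to \cite{M} ``in the same spirit'' does not quite supply it. The paper cites the two-sided growth estimate
\begin{equation*}
\frac{1}{C}\,R \;\le\; \int_{B(x_0,R)}G(x_0,x)\,dx \;\le\; CR
\end{equation*}
from \cite{MSW}, valid on complete manifolds with $\mathrm{Ric}\ge -2(m+1)$ and $\lambda_1(M)>0$. With it the verification is immediate: for the cutoff that ramps up on $B(x_0,2)\setminus B(x_0,1)$, equals $1$ on $B(x_0,R)\setminus B(x_0,2)$, and ramps down with slope $1/R$ on $B(x_0,2R)\setminus B(x_0,R)$, the numerator $\int_M G|\nabla\phi_R|^p$ stays bounded in $R$ (the inner annulus contributes a fixed constant, the outer annulus contributes at most $R^{-p}\cdot CR = CR^{1-p}$), while the denominator $\int_M G\phi_R^p \ge \int_{B(R)\setminus B(2)}G \ge C^{-1}R - c_0 \to\infty$, so $r(\phi_R)\to 0$. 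Without this Green's function growth estimate (or a substitute for it) your argument is incomplete at exactly the point you flag.
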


\begin{proof}
It suffices to prove the theorem for $p=2m+4$, as the estimate for smaller $p$ follows from H\"{o}lder inequality. Hence, throughout this proof, $p=2m+4$. 

Let us assume by contradiction that $\lambda _{1,p}\left( M\right) >\left( 
\frac{2m}{p}\right) ^{p}$. Then there exists $\varepsilon >0$ so that 
\begin{equation}
\lambda _{1,p}\left( M\right) \geq \left( \frac{2m}{p}\right)
^{p}+2\varepsilon .  \label{b5}
\end{equation}%
Consider $G\left( x_{0},x\right) $ the positive minimal Green's function,
which exists by (\ref{b4}). Define 
\begin{equation*}
\psi \left( x\right) :=G^{\frac{1}{p}}(x_{0},x)\phi \left( x\right) ,
\end{equation*}%
for a cut-off function $\phi \left( x\right) $ with support in $B\left(
x_{0},2R\right) \backslash B\left( x_{0},1\right) $ given by 
\begin{equation*}
\phi \left( x\right) =\left\{ 
\begin{array}{c}
r\left( x\right) -1 \\ 
1 \\ 
R^{-1}\left( 2R-r\left( x\right) \right)%
\end{array}%
\begin{array}{l}
\text{ on }B\left( x_{0},2\right) \backslash B\left( x_{0},1\right) \\ 
\text{on }B\left( x_{0},R\right) \backslash B\left( x_{0},2\right) \\ 
\text{on }B\left( x_{0},2R\right) \backslash B\left( x_{0},R\right)%
\end{array}%
\right.
\end{equation*}

Note that 
\begin{equation*}
\left\vert \nabla \psi \right\vert \leq G^{\frac{1}{p}}\left\vert \nabla
\phi \right\vert +\frac{1}{p}\left\vert \nabla \ln G\right\vert G^{\frac{1}{p%
}}\phi .
\end{equation*}%
Then it follows that 
\begin{eqnarray*}
\int_{M}\left\vert \nabla \psi \right\vert ^{p} &\leq
&p^{-p}\int_{M}G\left\vert \nabla \ln G\right\vert ^{p}\phi ^{p} \\
&&+\sum_{k=0}^{p-1}\left( 
\begin{array}{c}
p \\ 
k%
\end{array}%
\right) \int_{M}\left( \frac{1}{p}\phi \left\vert \nabla G\right\vert G^{%
\frac{1}{p}-1}\right) ^{k}\left( \left\vert \nabla \phi \right\vert G^{\frac{%
1}{p}}\right) ^{p-k} \\
&\leq &p^{-p}\int_{M}G\left\vert \nabla \ln G\right\vert ^{p}\phi
^{p}+c\left( m\right) \sum_{k=0}^{p-1}\int_{M}G\left\vert \nabla \ln
G\right\vert ^{k}\left\vert \nabla \phi \right\vert ^{p-k}.
\end{eqnarray*}%
Since $\left\vert \nabla \phi \right\vert \leq c$ and by Yau's estimate $%
\left\vert \nabla \ln G\right\vert \leq c\left( m\right) $ on the support of 
$\phi $, we get that 
\begin{equation*}
\int_{M}\left\vert \nabla \psi \right\vert ^{p}\leq
p^{-p}\int_{M}G\left\vert \nabla \ln G\right\vert ^{p}\phi ^{p}+c\left(
m\right) \int_{M}G\left\vert \nabla \phi \right\vert .
\end{equation*}%
From the integral estimate from Theorem \ref{Green} we have that 
\begin{equation*}
\int_{M}G\left\vert \nabla \ln G\right\vert ^{p}\phi ^{p}\leq \left( \left(
2m\right) ^{p}+\varepsilon \right) \int_{M}G\phi ^{p}+\frac{c\left( m\right) 
}{\varepsilon }\int_{M}G\left\vert \nabla \phi \right\vert ,
\end{equation*}%
where $c\left( m\right) $ is a constant depending only on $m$. In
conclusion, we obtain that 
\begin{equation}
\int_{M}\left\vert \nabla \psi \right\vert ^{p}\leq \left( \left( \frac{2m}{p%
}\right) ^{p}+\varepsilon \right) \int_{M}G\phi ^{p}+\frac{c\left( m\right) 
}{\varepsilon }\int_{M}G\left\vert \nabla \phi \right\vert .  \label{b8}
\end{equation}

By (\ref{b0}), (\ref{b5}) and (\ref{b8}) we conclude that 
\begin{equation*}
\varepsilon ^{2}\int_{M}G\phi ^{p}\leq c\int_{M}G\left\vert \nabla \phi
\right\vert .
\end{equation*}%
In particular, this proves that there exists a constant $\varepsilon _{0}>0$
so that 
\begin{eqnarray}
\varepsilon _{0}\int_{B\left( x_{0},R\right) \backslash B\left(
x_{0},2\right) }G\left( x_{0},x\right) dx &\leq &\frac{1}{R}\int_{B\left(
x_{0},2R\right) \backslash B\left( x_{0},R\right) }G\left( x_{0},x\right) dx
\label{b9} \\
&&+c\int_{B\left( x_{0},2\right) \backslash B\left( x_{0},1\right) }G\left(
x_{0},x\right) dx,  \notag
\end{eqnarray}%
for any $R>2$. Recall from \cite{MSW} that 
\begin{equation*}
\frac{1}{C}R\le\int_{B\left( x_{0},R\right) }G\left( x_{0},x\right) dx\leq CR,
\end{equation*}%
for some constant $C>0$ dependent only on $m$ and $\lambda _{1}\left( M\right) 
$. Tis contradicts (\ref{b9}). The theorem is proved. 
\end{proof}

We now obtain (\ref{Max}) by applying Theorem \ref{Green1}.

\begin{theorem}
\label{E'}Let $\left( M,g\right) $ be a K\"{a}hler manifold of complex
dimension $m$, with $\mathrm{Ric}\geq -2\left( m+1\right) $. Assume that $%
\left( M,g\right) $ has maximal bottom of spectrum of the Laplacian, 
\begin{equation*}
\lambda _{1}\left( M\right) =m^{2}.
\end{equation*}%
Then 
\begin{equation*}
\lambda _{1,p}\left( M\right) =\left( \frac{2m}{p}\right) ^{p},
\end{equation*}%
for any $p\geq 2$.
\end{theorem}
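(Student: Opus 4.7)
The lower bound $\lambda_{1,p}(M) \ge (2m/p)^p$ is automatic: plugging $\lambda_1(M) = m^2$ into (\ref{b1}) gives $\lambda_{1,p}(M) \ge (4m^2/p^2)^{p/2} = (2m/p)^p$. So the content of the theorem lies entirely in the reverse inequality, and the plan is to mimic the contradiction argument from the proof of Theorem \ref{E}, the point being that the maximality hypothesis $\lambda_1(M) = m^2$ now unlocks Theorem \ref{Green1} in place of Theorem \ref{Green}, so the restriction $p \le 2m+4$ disappears.

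More precisely, I would assume for contradiction that $\lambda_{1,p}(M) \ge (2m/p)^p + 2\varepsilon$ for some $\varepsilon > 0$. Since $\lambda_1(M) = m^2 > 0$, by (\ref{b4}) one has $\lambda_{1,p}(M) > 0$, so $M$ is non-parabolic and carries a positive minimal Green's function $G(x_0,\cdot)$. I would then set $\psi = G^{1/p}\phi$ with the same piecewise-linear cutoff $\phi$ supported in $B(x_0,2R)\setminus B(x_0,1)$ used in the proof of Theorem \ref{E}. Expanding $|\nabla \psi|^p$ binomially and using Yau's pointwise bound $|\nabla \ln G| \le c(m)$ on $\mathrm{supp}\,\phi$ (which avoids the pole) gives
\begin{equation*}
\int_M |\nabla \psi|^p \le p^{-p}\int_M G|\nabla \ln G|^p \phi^p + c(m)\int_M G|\nabla \phi|,
\end{equation*}
exactly as in the proof of Theorem \ref{E}. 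The key step is now to apply Theorem \ref{Green1} (rather than Theorem \ref{Green}) to the positive harmonic function $G$ on $M \setminus \{x_0\}$ with the given exponent $p \ge 2$, yielding
\begin{equation*}
\int_M G|\nabla \ln G|^p \phi^p \le \bigl((2m)^p + \varepsilon\bigr)\int_M G\phi^p + \frac{c(p,m)}{\varepsilon}\int_M G|\nabla \phi|.
\end{equation*}

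Inserting this into the variational characterization (\ref{b0}) and using the standing contradiction hypothesis produces, after cancellation of the leading $(2m/p)^p$ term,
\begin{equation*}
\varepsilon_0 \int_{B(x_0,R)\setminus B(x_0,2)} G \le \frac{1}{R}\int_{B(x_0,2R)\setminus B(x_0,R)} G + c\int_{B(x_0,2)\setminus B(x_0,1)} G,
\end{equation*}
just as in (\ref{b9}). The linear two-sided bound $C^{-1}R \le \int_{B(x_0,R)} G \le CR$ from \cite{MSW}, which is available thanks to $\lambda_1(M) > 0$, then gives a contradiction as $R \to \infty$. I do not expect any genuine obstacle here: the only place in the proof of Theorem \ref{E} where the hypothesis $p \le 2m+4$ entered was the invocation of Theorem \ref{Green}, and Theorem \ref{Green1} is specifically designed to remove that restriction under precisely the hypothesis $\lambda_1(M) = m^2$ imposed in this theorem.
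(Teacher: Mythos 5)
Your proposal is correct and matches the paper's own (very terse) proof exactly: the paper simply notes that the upper bound ``follows as in Theorem \ref{E}'' (with Theorem \ref{Green1} now replacing Theorem \ref{Green} to lift the restriction $p \le 2m+4$) and that the converse inequality results from (\ref{b1}). You have filled in precisely the argument the authors intended.
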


\begin{proof}
The proof of $\lambda _{1,p}\left( M\right) \leq \left( \frac{2m}{p}\right)
^{p}$ follows as in Theorem \ref{E}. The converse inequality results from (%
\ref{b1}).
\end{proof}

\section{Rigidity for maximal bottom spectrum\label{S3}}

In this section, we follow a theory developed by P. Li and J. Wang \cite%
{LW1, LW2, LW3, LW4} and study the rigidity of manifolds that achieve the
estimate for the bottom \ spectrum of the $p$-Laplacian in Theorem \ref{E},
and have more than one end. The strategy is to use harmonic functions
constructed in \cite{LT} for manifolds with more than one end, whose
behavior depends on whether the end has finite or infinite volume.

Assuming the manifold has at least two ends, we will first prove that one of
these ends must have finite volume. For a harmonic function $u$ associated
to any two ends of the manifold, where one of them has finite volume,
Theorem \ref{Green} proves a gradient estimate that implies Theorem \ref{E}.
When $\lambda _{1,p}\left( M\right) $ is maximal, one can infer from the
proof of Theorem \ref{Green} that all inequalities used there must turn into
equalities. This will imply the splitting of the manifold topologically into
product of the real line with a compact manifold, and will determine the
metric as well. For this approach to work, it is crucial that the boundary
terms expressed in $\mathcal{F}$ in (\ref{a19}) converge to zero for a
carefully chosen cut-off function. This turns out to be the case eventually.
Although each term in $\mathcal{F}$ does not converge to zero on a given
end, it can be computed explicitly and it yields the same absolute constant
but with different signs on the two ends. Hence, after cancellation we are
able to conclude the rigidity question. It should be noted that this
complication arises only in the K\"{a}hler case (cf. \cite{M}).

First, note that if $\lambda _{1,p}\left( M\right) $ is maximal, then $%
\lambda _{1,q}\left( M\right) $ is also maximal, for any $q\geq p$.
Similarly to (\ref{b1}), this follows from H\"{o}lder inequality. Hence,
throughout this section we will assume that $p=2m$ and $\lambda
_{1,2m}\left( M\right) $ is maximal, which is to say $\lambda _{1,2m}\left(
M\right) =1$. The following result therefore implies Theorem \ref{Rigid}.

\begin{theorem}
\label{R}Let $\left( M,g\right) $ be a K\"{a}hler manifold of complex
dimension $m\geq 2$ and with $\mathrm{Ric}\geq -2\left( m+1\right) $.
Suppose $M$ has maximal bottom of spectrum of the $2m$ Laplacian, 
\begin{equation*}
\lambda _{1,2m}\left( M\right) =1.
\end{equation*}%
Then either $M$ has one end or it is diffeomorphic to $\mathbb{R}\times N,$
for a compact $2m-1$ dimensional manifold $N$, and the metric on $M$ is
given by 
\begin{equation*}
ds_{M}^{2}=dt^{2}+e^{-4t}\omega _{2}^{2}+e^{-2t}\left( \omega
_{3}^{2}+...+\omega _{2m}^{2}\right) ,
\end{equation*}%
where $\left\{ \omega _{2},..,\omega _{2m}\right\} $ is an orthonormal
coframe for $N$.
\end{theorem}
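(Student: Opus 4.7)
Assume $M$ has at least two ends. Since $\lambda_{1,2m}(M)=1$ forces $\lambda_1(M)>0$ via (\ref{b4}), the manifold is nonparabolic and the Li--Tam construction \cite{LT} produces a bounded nonconstant harmonic function $u$ associated to the ends. The first step is to use the hypothesis $p=2m$ together with the Sung--Wang result \cite{SW} (which was quoted earlier as the reason for the restriction $p\le 2m$) to rule out the possibility of two infinite-volume ends. Hence, after relabeling, I may assume $u\to 0$ on a finite-volume end $E$ and $u\to 1$ on the remaining ends.

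Next, I would plug the test function $\psi = u^{1/(2m)}\phi$ into the variational characterization (\ref{b0}) of $\lambda_{1,2m}(M)$, with $\phi$ a cutoff supported on a large annulus, exactly as in the proof of Theorem \ref{E}, and apply Theorem \ref{Green} with $p=2m$. This yields
\[
\lambda_{1,2m}(M)\int_M u\phi^{2m} \;\le\; \int_M |\nabla \psi|^{2m} \;\le\; \bigl(1+\varepsilon\bigr)\int_M u\phi^{2m} + \text{boundary terms}.
\]
Since $\lambda_{1,2m}(M)=1$, letting the cutoff exhaust $M$ and $\varepsilon\to 0$ forces every inequality in the derivation of Theorem \ref{Green} to become equality pointwise on $M$. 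Tracing back, the sharp inequalities (\ref{a12}) and (\ref{a13}), together with the companion sharp inequality for $|u_{\alpha\beta}u_{\bar\alpha}|^2$, turn into identities, which is precisely equivalent to $u$ satisfying the model Hessian equations (\ref{Model}) together with $|\nabla u|=2mu$.

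The main obstacle, as the introduction of Section \ref{S3} already flags, is controlling the boundary quantity $\mathcal{F}$ in (\ref{a19}). On an individual end the terms in $\mathcal{F}$ do not converge to zero; this is a genuinely new difficulty not present in the Riemannian case treated in \cite{M}. The strategy is to choose $\phi=\phi(u)$ so that $\nabla\phi$ is parallel to $\nabla u$, then split the boundary integrals according to the end structure. Using that on the finite-volume end $E$ the asymptotic behavior of $u$ forces $u_{\alpha\bar\beta}u_\alpha u_{\bar\beta}$ and $u_{\alpha\beta}u_{\bar\alpha}u_{\bar\beta}$ to take specific opposite signs compared to the nonparabolic ends, the contributions from the two sides produce the same absolute constants with opposite signs and cancel in the limit. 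This cancellation is the heart of the argument.

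Once equality in (\ref{Model}) is secured, the rigidity conclusions follow. The identity $|\nabla\ln u|=2m$ implies that $t:=-\tfrac{1}{2m}\ln u$ is a smooth exhaustion with $|\nabla t|=1$, so the flow of $\nabla t$ gives a diffeomorphism $M\cong \mathbb{R}\times N$ with $N$ compact (the compactness follows since $E$ has finite volume and a level set of $t$). The complex Hessian equations (\ref{Model}) decouple the tangent bundle along this flow into the distinguished direction $J\nabla t$, which contracts at rate $2$, and the complex-orthogonal complement, which contracts at rate $1$. Pulling back the metric along this flow produces precisely the warped product
\[
ds_M^2 = dt^2 + e^{-4t}\omega_2^2 + e^{-2t}\bigl(\omega_3^2+\cdots+\omega_{2m}^2\bigr),
\]
as claimed in Theorem \ref{R}.
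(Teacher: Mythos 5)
Your high-level plan---Li--Tam harmonic function, cancellation of boundary remainders between the two ends, equality in the sharp inequalities, then rigidity of the Hessian---is the paper's strategy. But two of your concrete claims are wrong in ways that matter. First, the end structure is backwards. Proposition~\ref{Infinite} uses (\ref{b4}) to get $\lambda_1(M)>\frac{m+1}{2}$, and by Theorem~B of \cite{LW4} there is a \emph{unique infinite-volume} end $E$; $F=M\setminus E$ has finite volume. The Li--Tam function satisfies $\liminf_E u=0$ with finite Dirichlet energy on the \emph{infinite}-volume end and $\limsup_F u=\infty$ on the \emph{finite}-volume end. Your claim that $u\to 0$ on a finite-volume end and $u\to 1$ elsewhere describes the two-nonparabolic-end situation that Proposition~\ref{Infinite} has just excluded. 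This is not a relabeling issue: the decay bounds (\ref{e}) on $\int_{E\setminus B_R}u^2$ and (\ref{v}) on $\mathrm{Vol}(F\setminus B_R)$ are precisely what make the radial part of the cutoff negligible as $R\to\infty$, and they are attached to the correct ends.

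Second, your mechanism for the cancellation of $\mathcal{F}$ is not what occurs and would not yield a proof. You assert that the asymptotics of $u$ force $u_{\bar\alpha\beta}u_\alpha u_{\bar\beta}$ and $u_{\alpha\beta}u_{\bar\alpha}u_{\bar\beta}$ to take opposite signs on the two ends. In fact Lemma~\ref{0'} shows that the normalized integrals of $u^{-2}u_{\bar\alpha\beta}u_\alpha u_{\bar\beta}Q^{m-2}\chi^{4m-1}$ over $L(\delta\varepsilon,\varepsilon)\subset E$ and over $L(T,\beta T)\subset F$ converge to the \emph{same} constant $\frac{(m-1)A}{16}(4m^2)^{m-2}$, and Lemma~\ref{0} gives the analogous statement for $\int uQ^m\chi^{4m-1}$. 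The sign flip comes entirely from the level-set cutoff $\chi=\chi(u)$ of (\ref{hi}): $\chi$ increases with $u$ on $L(\delta\varepsilon,\varepsilon)$ and decreases with $u$ on $L(T,\beta T)$, so in $\nabla\chi=\chi'(u)\nabla u$ the derivative $\chi'$ changes sign, and the two identical limiting constants subtract to zero. To make this rigorous one must actually compute those limits, which the paper does through Proposition~\ref{B} (applied to a cutoff supported on a single end) plus the co-area identity (\ref{c}) with the constant flux $A=\int_{l(t)}|\nabla u|$. Without this explicit computation---not available under your ``opposite-signs'' heuristic---you cannot conclude that the remainders in (\ref{y3}) vanish, which is the crux of the equality case.
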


The proof will be done in several steps. From now on we will assume that $M$
satisfies the hypothesis of Theorem \ref{R} and that $M$ has at least two
ends. We first record the following result.

\begin{proposition}
\label{Infinite}Let $\left( M,g\right) $ be a K\"{a}hler manifold of complex
dimension $m\geq 2$ and with $\mathrm{Ric}\geq -2\left( m+1\right) $. Assume
that $M$ has maximal bottom of spectrum of the $2m$ Laplacian, 
\begin{equation*}
\lambda _{1,2m}\left( M\right) =1.
\end{equation*}%
Then 
\begin{equation*}
\lambda _{1}\left( M\right) >\frac{m+1}{2}
\end{equation*}%
and $M$ has only one infinite volume end.
\end{proposition}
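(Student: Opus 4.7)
The proposition has two conclusions, each of which I would extract from the assumption $\lambda_{1,2m}(M)=1$ via a different tool: inequality (\ref{b4}) for the spectral lower bound, and an end-structure theorem of Sung--Wang for the volume statement.

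First I would apply (\ref{b4}) with $p=2m$ and $\lambda_{1,2m}(M)=1$ to obtain
$$\lambda_{1}(M)\;\geq\; m\left(\frac{2m}{\sqrt{2(m+1)(2m-1)}}\right)^{2m-2}\;=\; m\left(\frac{2m^{2}}{(m+1)(2m-1)}\right)^{m-1}.$$
The task then reduces to the elementary inequality
$$m\left(\frac{2m^{2}}{(m+1)(2m-1)}\right)^{m-1}\;>\;\frac{m+1}{2}\qquad\text{for every integer } m\geq 2,$$
equivalently $\left(\frac{2m^{2}}{(m+1)(2m-1)}\right)^{m-1}>\frac{m+1}{2m}$. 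For small $m$ this is a direct check; for large $m$ one writes $\frac{2m^{2}}{(m+1)(2m-1)}=\frac{1}{1+\frac{m-1}{2m^{2}}}$ so that the $(m-1)$-th power tends to $e^{-1/2}$, comfortably larger than the right-hand side $\frac{m+1}{2m}\to \frac{1}{2}$. A monotonicity estimate handles the intermediate range, giving the first conclusion.

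For the end-structure claim I would invoke the Sung--Wang theorem in \cite{SW} (precisely the reason for the restriction $p\leq 2m$ noted in the introduction): on a complete Kähler manifold with $\mathrm{Ric}\geq -2(m+1)$ satisfying the sharp bound $\lambda_{1,p}(M)=(2m/p)^{p}$ with $p\leq 2m$, no two distinct ends of infinite volume can coexist. Applying this at $p=2m$ with $\lambda_{1,2m}(M)=1=(2m/2m)^{2m}$ immediately yields that $M$ has at most one infinite volume end.

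The main obstacle I anticipate is bookkeeping rather than conceptual: pinning down the exact Sung--Wang statement (together with its hypotheses on $p$) that covers the present setup, and handling the numerical verification of the spectral inequality cleanly across all $m\geq 2$. Neither step requires estimates beyond those already assembled in Sections \ref{S1} and \ref{S2}.
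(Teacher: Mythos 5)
Your derivation of $\lambda_{1}(M) > \frac{m+1}{2}$ matches the paper's: apply (\ref{b4}) with $p=2m$ and $\lambda_{1,2m}(M)=1$, note $(m+1)(2m-1) = 2m^{2}+m-1$, and verify the elementary inequality $m\left(\frac{2m^{2}}{2m^{2}+m-1}\right)^{m-1} > \frac{m+1}{2}$ by combining the limit $e^{-1/2} > \frac{1}{2}$ with direct checks of small $m$ and a monotonicity estimate. That part is correct and is essentially what the paper does.

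The end-structure step, however, rests on a citation that does not exist. You attribute to Sung--Wang \cite{SW} a K\"ahler theorem asserting that $\lambda_{1,p}(M)=(2m/p)^{p}$ for some $p \leq 2m$ forbids two infinite-volume ends. The paper \cite{SW} concerns Riemannian manifolds with a Ricci lower bound; its rigidity theory operates at the Riemannian threshold $\left(\frac{n-1}{p}\right)^{p}$, which for $m \geq 2$ is strictly larger than the K\"ahler threshold $(2m/p)^{p}$, so it does not apply at the K\"ahler value. The role of \cite{SW} in this Proposition is confined to the gradient and spectral estimates behind (\ref{b4}) --- exactly the inputs you already used for the first conclusion --- and the introduction's remark about $p\leq 2m$ and \cite{SW} refers to this passage through (\ref{b4}). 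What the paper actually does for the end structure is use the first conclusion: having established $\lambda_{1}(M) > \frac{m+1}{2}$, it invokes Theorem~B of Li--Wang \cite{LW4}, the K\"ahler connectedness-at-infinity theorem, which says that a complete K\"ahler manifold with $\mathrm{Ric} \geq -2(m+1)$ and $\lambda_{1}(M) > \frac{m+1}{2}$ has at most one infinite-volume end. In your proposal the two conclusions are treated as independent deductions from separate theorems, whereas in fact the second is derived from the first via Li--Wang; you would need to replace the Sung--Wang citation with this Li--Wang step.
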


\begin{proof}
From (\ref{b4}) we have 
\begin{equation*}
\lambda _{1}\left( M\right) \geq m\left( \frac{2m^{2}}{2m^{2}+m-1}\right)
^{m-1}.
\end{equation*}%
It follows through elementary calculations that 
\begin{equation}
\lambda _{1}\left( M\right) >\frac{m+1}{2}  \label{l}
\end{equation}%
for any $m\geq 2$. Indeed, it can be checked that the function 
\begin{equation*}
f\left( m\right) =\left( m-1\right) \ln \left( \frac{2m^{2}}{2m^{2}+m-1}%
\right) -\ln \left( \frac{m+1}{2m}\right)
\end{equation*}%
is decreasing on $\left[ 6,\infty \right) $ and has positive limit at
infinity. This implies (\ref{l}).

By Theorem B in \cite{LW4}, this proves that there exists only one infinite
volume end.
\end{proof}

Proposition \ref{Infinite} implies that there exists an infinite volume end $%
E$ of $M$, and $F=M\backslash E$ is a finite volume end. According to a
result of Li and Tam \cite{LT}, there exists a positive harmonic function $%
u:M\rightarrow \left( 0,\infty \right) $ with the following behavior at
infinity.

On the infinite volume end $E$ the function $u$ is bounded, $\liminf_{E}u=0 $%
, and $u$ has finite Dirichlet energy, $\int_{E}\left\vert \nabla
u\right\vert ^{2}<\infty $. Moreover, it was proved in Lemma 1.1 of \cite%
{LW1} that there exists a constant $C>0$ so that 
\begin{equation}
\int_{E\backslash B\left( x_{0},R\right) }u^{2}\leq Ce^{-2\sqrt{\lambda
_{1}\left( M\right) }R}.  \label{e}
\end{equation}%
On the finite volume end $F$ the function is unbounded, $\limsup_{F}u=\infty 
$. Moreover, by Theorem 1.4 in \cite{LW1} we have 
\begin{equation}
\mathrm{Vol}\left( F\backslash B\left( x_{0},R\right) \right) \leq ce^{-2%
\sqrt{\lambda _{1}\left( M\right) }R}.  \label{v}
\end{equation}%
The next result follows from Theorem \ref{Green} for $p=2m$ by carefully
keeping track of all boundary terms.

\begin{proposition}
\label{B} Let $\left( M,g\right) $ be a K\"{a}hler manifold of complex
dimension $m\geq 2$ and with $\mathrm{Ric}\geq -2\left( m+1\right) $. Assume
that $M$ has maximal bottom of spectrum of the $2m$ Laplacian, $\lambda
_{1,2m}\left( M\right) =1$. Let $u>0$ be the harmonic function defined above
and $\phi $ a non-negative cut-off function satisfying $\phi +\left\vert
\nabla \phi \right\vert \leq c\left( m\right) $. Denoting $Q=\left\vert
\nabla \ln u\right\vert ^{2}$, we have 
\begin{equation}
\left( 4m^{2}\right) ^{m-1}\int_{M}uQ\phi ^{2m}+\mathcal{R}_{2}\leq
\int_{M}uQ^{m}\phi ^{2m}\leq \left( 4m^{2}\right) ^{m-1}\int_{M}uQ\phi ^{2m}+%
\mathcal{R}_{1}  \label{I}
\end{equation}%
and 
\begin{eqnarray}
\int_{M}u^{-2}u_{\bar{\alpha}\beta }u_{\alpha }u_{\bar{\beta}}Q^{m-2}\phi
^{2m} &\leq &\left( 4m^{2}\right) ^{m-2}\frac{m\left( m-1\right) }{4}%
\int_{M}uQ\phi ^{2m}+\mathcal{R}_{3}  \label{II} \\
\int_{M}u^{-2}u_{\bar{\alpha}\beta }u_{\alpha }u_{\bar{\beta}}Q^{m-2}\phi
^{2m} &\geq &\left( 4m^{2}\right) ^{m-2}\frac{m\left( m-1\right) }{4}%
\int_{M}uQ\phi ^{2m}+\mathcal{R}_{4}  \notag
\end{eqnarray}

where%
\begin{eqnarray*}
\mathcal{R}_{i} &=&a_{i}\left( m\right) \int_{M}\left\langle \nabla u,\nabla
\phi ^{2m}\right\rangle Q^{m-1}+b_{i}\left( m\right) \int_{M}u^{-1}\mathrm{Re%
}\left( u_{\bar{\alpha}\beta }u_{\alpha }\left( \phi ^{2m}\right) _{\bar{%
\beta}}\right) Q^{m-2} \\
&&+c_{i}\left( m\right) \int_{M}u^{-1}\mathrm{Re}\left( u_{\alpha \beta }u_{%
\bar{\alpha}}\left( \phi ^{2m}\right) _{\bar{\beta}}\right)
Q^{m-2}+d_{i}\left( m\right) \int_{M}u\left\vert \nabla \phi \right\vert
^{2},
\end{eqnarray*}%
for some constants $a_{i},b_{i},c_{i}\,,d_{i}$ depending only on $m$.
\end{proposition}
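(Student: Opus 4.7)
My plan is to repeat the integration-by-parts chain that established Theorem \ref{Green} at $p=2m$, but with the cut-off $\phi^2$ replaced throughout by $\phi^{2m}$, and to carry every boundary integral explicitly rather than absorb them via the Cauchy-Schwarz step (\ref{a22})-(\ref{a24}). The identities (\ref{a4}), (\ref{a7}), (\ref{a16}), and (\ref{a17}) are valid for an arbitrary smooth non-negative cut-off, so the substitution only replaces $\mathcal{F}_1(k),\mathcal{F}_2(k)$ by their $\phi^{2m}$ analogues. The four types of boundary integral defining $\mathcal{R}_i$ are precisely what those analogues produce: type (i) from the $\int\langle\nabla u,\nabla\phi^{2m}\rangle Q^{m-1}$ piece of $\mathcal{F}_1,\mathcal{F}_2$, types (ii) and (iii) from the Hessian boundaries of $\mathcal{F}_1$ and $\mathcal{F}_2$ respectively, and type (iv) from the Cauchy-Schwarz estimate analogous to (\ref{a22}) used to bound only residual lower-order contributions.

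For the upper direction of (\ref{I}), I would combine Theorem \ref{Green} with cut-off $\phi^{2m}$ (which gives $W_m:=\int uQ^m\phi^{2m}\leq (4m^2)^m\int u\phi^{2m}+\text{boundary}$) with the Poincaré lower bound $\lambda_1(M)\int u\phi^{2m}\leq\tfrac14\int uQ\phi^{2m}+\text{boundary}$, obtained by testing $u^{1/2}\phi^m$ against the Rayleigh quotient of $\Delta$. Under the hypotheses of Proposition \ref{B} one has $\lambda_1(M)=m^2$: the bound $\lambda_1\leq m^2$ is from \cite{M} (or from (\ref{b1}) combined with $\lambda_{1,2m}(M)=1$), while the matching lower bound follows from the Li-Wang theory (\cite{LW4}) applied to the harmonic function $u$ with its sharp decay (\ref{e}), (\ref{v}) on the two ends. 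This yields $4m^2\int u\phi^{2m}\leq\int uQ\phi^{2m}+\text{boundary}$, whence $W_m\leq(4m^2)^{m-1}\int uQ\phi^{2m}+\mathcal{R}_1$. For the lower direction, I would test $\psi=u^{1/(2m)}\phi$ against $\lambda_{1,2m}(M)=1$ and expand $|\nabla\psi|^{2m}$ by the binomial formula: the dominant term $(2m)^{-2m}uQ^m\phi^{2m}$ yields $W_m\geq(4m^2)^m\int u\phi^{2m}-\text{boundary}$; combined with the upper Poincaré bound $\int uQ\phi^{2m}\leq 4m^2\int u\phi^{2m}+\text{boundary}$ (from Theorem \ref{Green} at $p=2$), this gives the lower direction of (\ref{I}) with $\mathcal{R}_2$.

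Inequality (\ref{II}) is then obtained by evaluating (\ref{a17}) at the exponent $k=m-2$, where the coefficient $1-k/m=2/m>0$ allows one to solve for $X_{m-2}=\int u^{-2}u_{\bar\alpha\beta}u_\alpha u_{\bar\beta}Q^{m-2}\phi^{2m}$ in terms of $W_m$, $W_{m-1}$ plus the same four boundary types. The algebraic identity $4m^2\cdot\tfrac{-m^2+5m-2}{32m^2}+\tfrac{(m+1)(m-2)}{8}=\tfrac{m-1}{2}$ (combining the $W_m$ and $W_{m-1}$ coefficients appearing at $k=m-2$) is what produces the correct coefficient $\tfrac{m(m-1)}{4}$ after using (\ref{I}) to replace $W_m$ by $(4m^2)^{m-1}\int uQ\phi^{2m}$, together with Theorem \ref{Green} at $p=2(m-1)$ and the Poincaré bound to handle $W_{m-1}$. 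The main obstacle is the precise bookkeeping: the $\mathcal{R}_i$ must take exactly the stated linear combination with the $m$-dependent constants $a_i,b_i,c_i,d_i$. This forces the Hessian-type boundary integrals (ii), (iii) to be preserved unsimplified---they cannot be absorbed into $\int u|\nabla\phi|^2$ via (\ref{a22}) as in the proof of Theorem \ref{Green}---because they carry the signed information that later cancels exactly between the infinite-volume and finite-volume ends in the rigidity argument of Theorem \ref{R}.
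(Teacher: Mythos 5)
Your overall framing is right: all four inequalities do come from re-running the integration-by-parts chain of Theorem \ref{Green} (at $k=m-2$, with $\phi$ replaced by $\phi^m$) while keeping the boundary integrals explicit, and you correctly identify the reason one cannot invoke the $\epsilon$-absorbed form (\ref{a22})--(\ref{a24}): the Hessian boundary terms carry signed information that must survive for the cancellation in Theorem \ref{R}. Your analysis of (\ref{II}) via (\ref{a17}) at $k=m-2$ with the coefficient identity is also sound.

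However, there is a genuine gap in the way you propose to obtain both directions of (\ref{I}). You want to pair the binomial expansion of $|\nabla(u^{1/(2m)}\phi)|^{2m}$ against $\lambda_{1,2m}(M)=1$ with a Poincar\'{e}-type inequality $4m^2\int u\phi^{2m}\leq\int uQ\phi^{2m}+\text{boundary}$, and you derive that inequality from the hypothesis $\lambda_1(M)=m^2$. But $\lambda_1(M)=m^2$ is not among the hypotheses of Proposition \ref{B} and does not follow from $\lambda_{1,2m}(M)=1$: the only lower bound available is (\ref{b4}), which gives $\lambda_1(M)\geq m\left(\tfrac{2m^2}{2m^2+m-1}\right)^{m-1}$, strictly below $m^2$ (for $m=2$ this is $16/9 < 4$). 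Indeed, Proposition \ref{Infinite} extracts only $\lambda_1(M)>\tfrac{m+1}{2}$ from these hypotheses, and the Li--Wang rigidity that would pin down $\lambda_1(M)$ exactly is precisely what Theorem \ref{R} is trying to prove, so assuming it here is circular.

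The paper's route avoids this entirely. For the \emph{lower} direction of (\ref{I}), after the binomial expansion gives $\int u\phi^{2m}\leq\tfrac{1}{(4m^2)^m}\int uQ^m\phi^{2m}+\text{boundary}$, one applies the pointwise Young inequality $(4m^2)^{m-1}Q\leq\tfrac{m-1}{m}(4m^2)^m+\tfrac1m Q^m$ to replace $(4m^2)^m\int u\phi^{2m}$; no Poincar\'{e} bound on $\int uQ\phi^{2m}$ and no value of $\lambda_1$ are needed. For the \emph{upper} direction, the paper does not use $\lambda_{1,2m}$ or $\lambda_1$ at all: it runs the Bochner chain (\ref{a17}) at $k=m-2$, uses (\ref{a6'}) and the Ricci bound to obtain $\int uQ^m\phi^{2m}\leq 4m^2\int uQ^{m-1}\phi^{2m}+\text{boundary}$, and then applies Young's inequality $4m^2Q^{m-1}\leq\tfrac{m-2}{m-1}Q^m+\tfrac{1}{m-1}(4m^2)^{m-1}Q$. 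This is both self-contained and free of the extra $\epsilon\int u\phi^{2m}$ contamination that would inevitably appear if you invoked Theorem \ref{Green} (at $p=2$ or $p=2m$) as a black box --- a term which is not of the form $\mathcal{R}_i$ and would not vanish in the limit. You should replace your Poincar\'{e} step by these two Young inequalities and drop the reference to $\lambda_1(M)=m^2$.
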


\begin{proof}
To prove the first inequality in (\ref{I}) we use that $\lambda
_{1,2m}\left( M\right) =1$, so 
\begin{equation}
\int_{M}\left( u^{\frac{1}{2m}}\phi \right) ^{2m}\leq \int_{M}\left\vert
\nabla \left( u^{\frac{1}{2m}}\phi \right) \right\vert ^{2m}.  \label{m1}
\end{equation}%
We have that 
\begin{eqnarray*}
\left\vert \nabla \left( u^{\frac{1}{2m}}\phi \right) \right\vert ^{2}
&=&\left\vert u^{\frac{1}{2m}}\nabla \phi +\frac{1}{2m}u^{\frac{1}{2m}}\phi
\nabla \ln u\right\vert ^{2} \\
&=&\frac{1}{4m^{2}}\phi ^{2}u^{\frac{1}{m}}Q+u^{\frac{1}{m}}\left\vert
\nabla \phi \right\vert ^{2}+\frac{1}{m}u^{\frac{1}{m}}\phi \left\langle
\nabla \ln u,\nabla \phi \right\rangle .
\end{eqnarray*}%
We write the right hand side of (\ref{m1}) as 
\begin{eqnarray*}
&&\int_{M}\left\vert \nabla \left( u^{\frac{1}{2m}}\phi \right) \right\vert
^{2m} \\
&=&\int_{M}\left( \frac{1}{4m^{2}}\phi ^{2}u^{\frac{1}{m}}Q+u^{\frac{1}{m}%
}\left\vert \nabla \phi \right\vert ^{2}+\frac{1}{m}u^{\frac{1}{m}}\phi
\left\langle \nabla \ln u,\nabla \phi \right\rangle \right) ^{m} \\
&=&\sum_{j=0}^{m}\left( 
\begin{array}{c}
m \\ 
j%
\end{array}%
\right) \int_{M}\left( \frac{1}{4m^{2}}\phi ^{2}u^{\frac{1}{m}}Q\right)
^{m-j}\left( u^{\frac{1}{m}}\left\vert \nabla \phi \right\vert ^{2}+\frac{1}{%
m}u^{\frac{1}{m}}\phi \left\langle \nabla \ln u,\nabla \phi \right\rangle
\right) ^{j} \\
&=&\frac{1}{\left( 4m^{2}\right) ^{m}}\int_{M}uQ^{m}\phi ^{2m} \\
&&+m\int_{M}\left( \frac{1}{4m^{2}}\phi ^{2}u^{\frac{1}{m}}Q\right)
^{m-1}\left( u^{\frac{1}{m}}\left\vert \nabla \phi \right\vert ^{2}+\frac{1}{%
m}u^{\frac{1}{m}}\phi \left\langle \nabla \ln u,\nabla \phi \right\rangle
\right) \\
&&+\sum_{j=2}^{m}\left( 
\begin{array}{c}
m \\ 
j%
\end{array}%
\right) \int_{M}\left( \frac{1}{4m^{2}}\phi ^{2}u^{\frac{1}{m}}Q\right)
^{m-j}\left( u^{\frac{1}{m}}\left\vert \nabla \phi \right\vert ^{2}+\frac{1}{%
m}u^{\frac{1}{m}}\phi \left\langle \nabla \ln u,\nabla \phi \right\rangle
\right) ^{j}.
\end{eqnarray*}%
By Yau's estimate $Q\leq c\left( m\right) $, and by hypothesis $\phi $
satisfies $\phi +\left\vert \nabla \phi \right\vert \leq c\left( m\right) $.
So for $j\geq 2$ we have 
\begin{equation*}
\left( \frac{1}{4m^{2}}\phi ^{2}u^{\frac{1}{m}}Q\right) ^{m-j}\left( u^{%
\frac{1}{m}}\left\vert \nabla \phi \right\vert ^{2}+\frac{1}{m}u^{\frac{1}{m}%
}\phi \left\vert \left\langle \nabla \ln u,\nabla \phi \right\rangle
\right\vert \right) ^{j}\leq c\left( m\right) \left\vert \nabla \phi
\right\vert ^{2}u.
\end{equation*}%
Hence, it follows that 
\begin{eqnarray}
\int_{M}\left\vert \nabla \left( u^{\frac{1}{2m}}\phi \right) \right\vert
^{2m} &\leq &\frac{1}{\left( 4m^{2}\right) ^{m}}\int_{M}uQ^{m}\phi ^{2m}
\label{m2} \\
&&+\frac{1}{\left( 4m^{2}\right) ^{m-1}}\int_{M}\phi ^{2m-1}\left\langle
\nabla u,\nabla \phi \right\rangle Q^{m-1}  \notag \\
&&+c\left( m\right) \int_{M}\left\vert \nabla \phi \right\vert ^{2}u.  \notag
\end{eqnarray}%
Consequently, (\ref{m1}) and (\ref{m2}) imply that 
\begin{eqnarray*}
\int_{M}u\phi ^{2m} &\leq &\frac{1}{\left( 4m^{2}\right) ^{m}}%
\int_{M}uQ^{m}\phi ^{2m} \\
&&+\frac{1}{\left( 4m^{2}\right) ^{m-1}}\int_{M}\phi ^{2m-1}\left\langle
\nabla u,\nabla \phi \right\rangle Q^{m-1} \\
&&+c\left( m\right) \int_{M}\left\vert \nabla \phi \right\vert ^{2}u.
\end{eqnarray*}%
Together with Young's inequality 
\begin{equation*}
\left( 4m^{2}\right) ^{m-1}Q\leq \frac{m-1}{m}\left( 4m^{2}\right) ^{m}+%
\frac{1}{m}Q^{m},
\end{equation*}%
this proves 
\begin{eqnarray}
\int_{M}uQ\phi ^{2m} &\leq &\frac{1}{\left( 4m^{2}\right) ^{m-1}}%
\int_{M}uQ^{m}\phi ^{2m}  \label{m3} \\
&&+\frac{2\left( m-1\right) }{\left( 4m^{2}\right) ^{m-1}}%
\int_{M}\left\langle \nabla u,\nabla \phi ^{2m}\right\rangle Q^{m-1}  \notag
\\
&&+c\left( m\right) \int_{M}\left\vert \nabla \phi \right\vert ^{2}u.  \notag
\end{eqnarray}%
This is the first inequality in (\ref{I}).

We now prove the second inequality in (\ref{I}). Recall that by (\ref{a17})
we get setting $k=m-2$ and replacing $\phi $ by $\phi ^{m},$%
\begin{eqnarray}
\int_{M}u^{-2}u_{\bar{\alpha}\beta }u_{\alpha }u_{\bar{\beta}}Q^{m-2}\phi
^{2m} &\leq &-\frac{m^{2}-5m+2}{64m}\int_{M}uQ^{m}\phi ^{2m}  \label{m4} \\
&&+\frac{m\left( m+1\right) \left( m-2\right) }{16}\int_{M}uQ^{m-1}\phi ^{2m}
\notag \\
&&+\frac{m^{2}}{4}\mathcal{F}_{1}(m-2)+\frac{m\left( m-2\right) }{4}\mathcal{F}%
_{2}(m-2),  \notag
\end{eqnarray}

where 
\begin{equation*}
\mathcal{F}_{1}(m-2)=\frac{m-2}{8\left( m-1\right) }\int_{M}\left\langle \nabla
u,\nabla \phi ^{2m}\right\rangle Q^{m-1}-\int_{M}u^{-1}\mathrm{Re}\left( u_{%
\bar{\alpha}\beta }u_{\alpha }\left( \phi ^{2m}\right) _{\bar{\beta}}\right)
Q^{m-2}
\end{equation*}%
and 
\begin{equation*}
\mathcal{F}_{2}(m-2)=-\frac{1}{16\left( m-1\right) }\int_{M}\left\langle \nabla
u,\nabla \phi ^{2m}\right\rangle Q^{m-1}-\int_{M}u^{-1}\mathrm{Re}\left(
u_{\alpha \beta }u_{\bar{\alpha}}\left( \phi ^{2m}\right) _{\bar{\beta}%
}\right) Q^{m-2}.
\end{equation*}
To simplify notation, we subsequently omit the dependency of $\mathcal{F}_{i}$ on $(m-2)$.
 
We estimate the left side of (\ref{m4}) by using (\ref{a6'}) that 
\begin{eqnarray}
&&\int_{M}u^{-2}\mathrm{Re}\left( u_{\alpha \beta }u_{\bar{\alpha}}u_{\bar{%
\beta}}\right) Q^{m-2}\phi ^{2m}  \label{m5} \\
&=&-\int_{M}u^{-2}u_{\bar{\alpha}\beta }u_{\alpha }u_{\bar{\beta}%
}Q^{m-2}\phi ^{2m}  \notag \\
&&-\frac{1}{16\left( m-1\right) }\int_{M}Q^{m-1}\left\langle \nabla u,\nabla
\phi ^{2m}\right\rangle  \notag \\
&&+\frac{1}{8}\int_{M}uQ^{m}\phi ^{2m}.  \notag
\end{eqnarray}%
Moreover, we have the following estimate 
\begin{eqnarray}
&&\int_{M}u^{-2}\mathrm{Re}\left( u_{\alpha \beta }u_{\bar{\alpha}}u_{\bar{%
\beta}}\right) Q^{m-2}\phi ^{2m}  \label{m6} \\
&\leq &\frac{1}{4}\int_{M}\left\vert u_{\alpha \beta }\right\vert
Q^{m-1}\phi ^{2m}  \notag \\
&\leq &\frac{m}{2\left( m+1\right) }\int_{M}u^{-1}\left\vert u_{\alpha \beta
}\right\vert ^{2}Q^{m-2}\phi ^{2m}+\frac{m+1}{32m}\int_{M}uQ^{m}\phi ^{2m}. 
\notag
\end{eqnarray}

By (\ref{a9}) and (\ref{a13}) we get 
\begin{eqnarray*}
\int_{M}u^{-1}\left\vert u_{\alpha \beta }\right\vert ^{2}Q^{m-2}\phi ^{2m}
&\leq &-2\int_{M}u^{-2}u_{\bar{\alpha}\beta }u_{\alpha }u_{\bar{\beta}%
}Q^{m-2}\phi ^{2m}+\frac{m+1}{4}\int_{M}uQ^{m-1}\phi ^{2m} \\
&&+\frac{3m-1}{16m}\int_{M}uQ^{m}\phi ^{2m}+\mathcal{F}_{1}+\mathcal{F}_{2}.
\end{eqnarray*}

We use this in (\ref{m6}) to conclude 
\begin{eqnarray}
&&\int_{M}u^{-2}\mathrm{Re}\left( u_{\alpha \beta }u_{\bar{\alpha}}u_{\bar{%
\beta}}\right) Q^{m-2}\phi ^{2m}  \label{m7} \\
&\leq &-\frac{m}{m+1}\int_{M}u^{-2}u_{\bar{\alpha}\beta }u_{\alpha }u_{\bar{%
\beta}}Q^{m-2}\phi ^{2m}  \notag \\
&&+\frac{m}{8}\int_{M}uQ^{m-1}\phi ^{2m}+\frac{4m^{2}+m+1}{32m\left(
m+1\right) }\int_{M}uQ^{m}\phi ^{2m}  \notag \\
&&+\frac{m}{2\left( m+1\right) }\left( \mathcal{F}_{1}+\mathcal{F}%
_{2}\right) .  \notag
\end{eqnarray}%
Now (\ref{m5}) and (\ref{m7}) imply that%
\begin{eqnarray}
&&\int_{M}u^{-2}u_{\bar{\alpha}\beta }u_{\alpha }u_{\bar{\beta}}Q^{m-2}\phi
^{2m}  \label{m8} \\
&\geq &\frac{3m-1}{32m}\int_{M}uQ^{m}\phi ^{2m}-\frac{m\left( m+1\right) }{8}%
\int_{M}uQ^{m-1}\phi ^{2m}  \notag \\
&&-\frac{m+1}{16\left( m-1\right) }\int_{M}Q^{m-1}\left\langle \nabla
u,\nabla \phi ^{2m}\right\rangle  \notag \\
&&-\frac{m}{2}\left( \mathcal{F}_{1}+\mathcal{F}_{2}\right) .  \notag
\end{eqnarray}%
Combining this with (\ref{m4}) yields 
\begin{eqnarray}
\int_{M}uQ^{m}\phi ^{2m} &\leq &4m^{2}\int_{M}uQ^{m-1}\phi ^{2m}  \label{m9}
\\
&&+\frac{4}{m-1}\int_{M}\left\langle \nabla u,\nabla \phi ^{2m}\right\rangle
Q^{m-1}  \notag \\
&&+16\frac{m\left( m+2\right) }{m+1}\mathcal{F}_{1}+16\frac{m^{2}}{m+1}%
\mathcal{F}_{2}.  \notag
\end{eqnarray}%
We use Young's inequality 
\begin{equation}
4m^{2}Q^{m-1}\leq \frac{m-2}{m-1}Q^{m}+\frac{1}{m-1}\left( 4m^{2}\right)
^{m-1}Q  \label{m10}
\end{equation}%
to conclude from (\ref{m9}) that 
\begin{eqnarray}
\int_{M}uQ^{m}\phi ^{2m} &\leq &\left( 4m^{2}\right) ^{m-1}\int_{M}uQ\phi
^{2m}+4\int_{M}\left\langle \nabla u,\nabla \phi ^{2m}\right\rangle Q^{m-1}
\label{m11} \\
&&+16\frac{m\left( m-1\right) \left( m+2\right) }{m+1}\mathcal{F}_{1}+16%
\frac{m^{2}\left( m-1\right) }{m+1}\mathcal{F}_{2}.  \notag
\end{eqnarray}

This completes the proof of (\ref{I}).

To prove the first inequality in (\ref{II}), we first use (\ref{m10}) into (%
\ref{m4}) to get 
\begin{eqnarray*}
&&\int_{M}u^{-2}u_{\bar{\alpha}\beta }u_{\alpha }u_{\bar{\beta}}Q^{m-2}\phi
^{2m} \\
&\leq &\frac{3m^{2}-7m+6}{64m\left( m-1\right) }\int_{M}uQ^{m}\phi ^{2m}+%
\frac{m\left( m+1\right) \left( m-2\right) }{16\left( m-1\right) }\left(
4m^{2}\right) ^{m-2}\int_{M}uQ\phi ^{2m} \\
&&+\frac{m^{2}}{4}\mathcal{F}_{1}+\frac{m\left( m-2\right) }{4}\mathcal{F}%
_{2}.
\end{eqnarray*}

We plug (\ref{m11}) into this inequality and obtain 
\begin{eqnarray*}
\int_{M}u^{-2}u_{\bar{\alpha}\beta }u_{\alpha }u_{\bar{\beta}}Q^{m-2}\phi
^{2m} &\leq &\left( 4m^{2}\right) ^{m-2}\frac{m\left( m-1\right) }{4}%
\int_{M}uQ\phi ^{2m} \\
&&+\mathcal{R}_{3},
\end{eqnarray*}%
where 
\begin{eqnarray*}
\mathcal{R}_{3} &=&a_{3}\left( m\right) \int_{M}\left\langle \nabla u,\nabla
\phi ^{2m}\right\rangle Q^{m-1}+b_{3}\left( m\right) \int_{M}u^{-1}\mathrm{Re%
}\left( u_{\bar{\alpha}\beta }u_{\alpha }\left( \phi ^{2m}\right) _{\bar{%
\beta}}\right) Q^{m-2} \\
&&+c_{3}\left( m\right) \int_{M}u^{-1}\mathrm{Re}\left( u_{\alpha \beta }u_{%
\bar{\alpha}}\left( \phi ^{2m}\right) _{\bar{\beta}}\right)
Q^{m-2}+d_{3}\left( m\right) \int_{M}\left\vert \nabla \phi \right\vert
^{2}u.
\end{eqnarray*}%
This proves the first inequality in (\ref{II}).

To prove the second inequality in (\ref{II}), we plug (\ref{m10}) into (\ref%
{m8}) and get 
\begin{eqnarray*}
\int_{M}u^{-2}u_{\bar{\alpha}\beta }u_{\alpha }u_{\bar{\beta}}Q^{m-2}\phi
^{2m} &\geq &\frac{2m^{2}-3m+3}{32m\left( m-1\right) }\int_{M}uQ^{m}\phi
^{2m} \\
&&-\frac{m\left( m+1\right) }{8\left( m-1\right) }\left( 4m^{2}\right)
^{m-2}\int_{M}uQ\phi ^{2m} \\
&&-\frac{m+1}{16\left( m-1\right) }\int_{M}Q^{m-1}\left\langle \nabla
u,\nabla \phi ^{2m}\right\rangle \\
&&-\frac{m}{2}\left( \mathcal{F}_{1}+\mathcal{F}_{2}\right) .
\end{eqnarray*}%
Using (\ref{m3}) into this inequality it follows that 
\begin{equation*}
\int_{M}u^{-2}u_{\bar{\alpha}\beta }u_{\alpha }u_{\bar{\beta}}Q^{m-2}\phi
^{2m}\geq \left( 4m^{2}\right) ^{m-2}\frac{m\left( m-1\right) }{4}%
\int_{M}uQ\phi ^{2m}+\mathcal{R}_{4},
\end{equation*}%
for 
\begin{eqnarray*}
\mathcal{R}_{4} &=&a_{4}\left( m\right) \int_{M}\left\langle \nabla u,\nabla
\phi ^{2m}\right\rangle Q^{m-1}+b_{4}\left( m\right) \int_{M}u^{-1}\mathrm{Re%
}\left( u_{\bar{\alpha}\beta }u_{\alpha }\left( \phi ^{2m}\right) _{\bar{%
\beta}}\right) Q^{m-2} \\
&&+c_{4}\left( m\right) \int_{M}u^{-1}\mathrm{Re}\left( u_{\alpha \beta }u_{%
\bar{\alpha}}\left( \phi ^{2m}\right) _{\bar{\beta}}\right)
Q^{m-2}+d_{4}\left( m\right) \int_{M}\left\vert \nabla \phi \right\vert
^{2}u.
\end{eqnarray*}%
This proves the result.
\end{proof}

From the inequality (\ref{I}) in Proposition \ref{B} we see that 
\begin{equation}
\mathcal{R}_{2}\leq \int_{M}uQ^{m}\phi ^{2m}-\left( 4m^{2}\right)
^{m-1}\int_{M}uQ\phi ^{2m}\leq \mathcal{R}_{1},  \label{g1}
\end{equation}%
where for $i\in \left\{ 1,2\right\} $%
\begin{eqnarray*}
\mathcal{R}_{i} &=&a_{i}\left( m\right) \int_{M}\left\langle \nabla u,\nabla
\phi ^{2m}\right\rangle Q^{m-1}+b_{i}\left( m\right) \int_{M}u^{-1}\mathrm{Re%
}\left( u_{\bar{\alpha}\beta }u_{\alpha }\left( \phi ^{2m}\right) _{\bar{%
\beta}}\right) Q^{m-2} \\
&&+c_{i}\left( m\right) \int_{M}u^{-1}\mathrm{Re}\left( u_{\alpha \beta }u_{%
\bar{\alpha}}\left( \phi ^{2m}\right) _{\bar{\beta}}\right)
Q^{m-2}+d_{i}\left( m\right) \int_{M}u\left\vert \nabla \phi \right\vert
^{2},
\end{eqnarray*}%
for some constants $a_{i},b_{i},c_{i}\,,d_{i}$ depending only on $m$. From (%
\ref{g1}) we conclude that there exist constants $\alpha _{i}\left( m\right)
>0$ so that 
\begin{eqnarray}
&&\left\vert \int_{M}uQ^{m}\phi ^{2m}-\left( 4m^{2}\right)
^{m-1}\int_{M}uQ\phi ^{2m}\right\vert  \label{y3} \\
&\leq &\alpha _{1}\left( m\right) \left\vert \int_{M}\left\langle \nabla
u,\nabla \phi ^{2m}\right\rangle Q^{m-1}\right\vert  \notag \\
&&+\alpha _{2}\left( m\right) \left\vert \int_{M}u^{-1}\mathrm{Re}\left( u_{%
\bar{\alpha}\beta }u_{\alpha }\left( \phi ^{2m}\right) _{\bar{\beta}}\right)
Q^{m-2}\right\vert  \notag \\
&&+\alpha _{3}\left( m\right) \left\vert \int_{M}u^{-1}\mathrm{Re}\left(
u_{\alpha \beta }u_{\bar{\alpha}}\left( \phi ^{2m}\right) _{\bar{\beta}%
}\right) Q^{m-2}\right\vert  \notag \\
&&+\alpha _{4}\left( m\right) \int_{M}u\left\vert \nabla \phi \right\vert
^{2}.  \notag
\end{eqnarray}

Our goal is to prove that the right side of (\ref{y3}) converges to zero. As
in \cite{LW3}, we will use $u$ to construct a cut-off function $\phi $ on $M$%
. Denote the level and sublevel sets of $u$ with 
\begin{eqnarray*}
l\left( t\right) &=&\left\{ x\in M:u\left( x\right) =t\right\} \\
L\left( a,b\right) &=&\left\{ x\in M:a<u\left( x\right) <b\right\} .
\end{eqnarray*}%
Note that $l\left( t\right) $ and $L\left( a,b\right) $ may not be compact
for all $t$ and all $a<b$.

For arbitrary numbers $0<\delta \varepsilon <\varepsilon <T<\beta T<\infty $
and $R>1$ we consider the cut-off function 
\begin{equation}
\phi =\left( \chi \psi \right) ^{2}  \label{y4}
\end{equation}%
where $\chi $ is given by 
\begin{equation}
\chi =\left\{ 
\begin{array}{c}
\left( -\ln \delta \right) ^{-1}\left( \ln u-\ln \delta \varepsilon \right) 
\text{ } \\ 
1 \\ 
\left( \ln \beta \right) ^{-1}\left( \ln \left( \beta T\right) -\ln u\right)
\\ 
0%
\end{array}%
\right. 
\begin{array}{l}
\text{on }L\left( \delta \varepsilon ,\varepsilon \right) \\ 
\text{on \ }L\left( \varepsilon ,T\right) \\ 
\text{on \ }L\left( T,\beta T\right) \\ 
\text{otherwise}%
\end{array}
\label{hi}
\end{equation}%
and $\psi \left( r\right) $ is a function with support in $B\left(
x_{0},2R\right) $ so that $\psi =1$ on $B\left( x_{0},R\right) $ and $%
\left\vert \nabla \psi \right\vert \leq \frac{c}{R}$. Eventually, we will
let $R\rightarrow \infty $ and then $\delta \rightarrow 0$ and $\beta
\rightarrow \infty $. The following observation will be important for this
purpose.

Using the co-area formula and that $u$ is harmonic, it was proved in Lemma
5.1 of \cite{LW3} that 
\begin{equation*}
\int_{l\left( t\right) }\left\vert \nabla u\right\vert =\int_{l\left(
s\right) }\left\vert \nabla u\right\vert <\infty ,
\end{equation*}%
for any $t,s>0$.  

By the co-area formula, it follows that for any function 
$H:\left( 0,\infty \right) \rightarrow \mathbb{R}$ we have 
\begin{equation}
\int_{L\left( a,b\right) }\left\vert \nabla u\right\vert ^{2}H\left(
u\right) =A\int_{a}^{b}H\left( s\right) ds,  \label{c}
\end{equation}%
where we have denoted with 
\begin{equation}
A=\int_{l\left( t\right) }\left\vert \nabla u\right\vert .  \label{A}
\end{equation}%
We fix $0<\delta \varepsilon <\varepsilon <T<\beta T$ and study (\ref{y3})
as $R\rightarrow \infty $. We first record two preliminary results.

\begin{lemma}
\label{0}Under the assumptions of Theorem \ref{R}, for $\chi $ given in (\ref%
{hi}), we have 
\begin{eqnarray*}
\left\vert \left( -\ln \delta \right) ^{-1}\int_{L\left( \delta \varepsilon
,\varepsilon \right) }uQ^{m}\chi ^{4m-1}-\frac{A}{4m}\left( 4m^{2}\right)
^{m-1}\right\vert &\leq &c\left( m\right) A\left( -\ln \delta \right) ^{-%
\frac{1}{2}} \\
\left\vert \left( \ln \beta \right) ^{-1}\int_{L\left( T,\beta T\right)
}uQ^{m}\chi ^{4m-1}-\frac{A}{4m}\left( 4m^{2}\right) ^{m-1}\right\vert &\leq
&c\left( m\right) A\left( \ln \beta \right) ^{-\frac{1}{2}},
\end{eqnarray*}%
for a constant $c\left( m\right) $ depending only on $m$.
\end{lemma}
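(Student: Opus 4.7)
The plan is to combine an exact evaluation of the ``linear'' integral $\int uQ\chi^{4m-1}$ via the co-area formula with Proposition \ref{B} (inequality (\ref{I})) to replace $Q^m$ by $(4m^2)^{m-1}Q$ with controlled error. On $L(\delta\varepsilon,\varepsilon)$ one has $\nabla\chi=(-\ln\delta)^{-1}\nabla\ln u$, and by harmonicity of $u$ the flux $A=\int_{l(t)}|\nabla u|$ is independent of $t$. Writing $uQ=u^{-1}|\nabla u|^2$ and applying co-area with substitution $s=\ln t - \ln(\delta\varepsilon)$,
\begin{equation*}
\int_{L(\delta\varepsilon,\varepsilon)} uQ\chi^{4m-1} = A\int_{\delta\varepsilon}^{\varepsilon}\frac{\chi(t)^{4m-1}}{t}\,dt = A\int_0^{-\ln\delta}\Bigl(\frac{s}{-\ln\delta}\Bigr)^{4m-1}ds = \frac{A(-\ln\delta)}{4m}.
\end{equation*}
This accounts exactly for the prefactor $A/(4m)$ in the target constant once $Q^{m-1}$ is replaced by its ``effective'' value $(4m^2)^{m-1}$. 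The analogous computation yields $\int_{L(T,\beta T)} uQ\chi^{4m-1} = A\ln\beta/(4m)$.

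To replace $Q^m$ by $(4m^2)^{m-1}Q$ in the integral I would apply Proposition \ref{B} with the cutoff $\phi = \chi^{(4m-1)/(2m)}\psi$, where $\psi$ is a Riemannian ball cutoff supported in $B(x_0, 2R)$. Since $|\nabla\chi| = O((-\ln\delta)^{-1})$ on $L(\delta\varepsilon,\varepsilon)$ and $O((\ln\beta)^{-1})$ on $L(T,\beta T)$, the hypothesis $\phi+|\nabla\phi|\leq c(m)$ holds. Sending $R\to\infty$ and using the decay estimates (\ref{e}) and (\ref{v}) to kill the $|\nabla\psi|$ contribution, inequality (\ref{I}) becomes
\begin{equation*}
\Bigl|\int uQ^m\chi^{4m-1} - (4m^2)^{m-1}\int uQ\chi^{4m-1}\Bigr| \leq |\mathcal{R}_1|+|\mathcal{R}_2|,
\end{equation*}
and the piece on $L(\delta\varepsilon,\varepsilon)$ is isolated by subtracting off the $\delta$-independent contribution from $L(\varepsilon,T)$ (of size $O(A)$ by Yau's estimate) and treating the symmetric $L(T,\beta T)$ piece by the same argument.

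The main obstacle lies in showing $|\mathcal{R}_1|+|\mathcal{R}_2| \leq c(m)A(-\ln\delta)^{1/2}$. The boundary terms in $\mathcal{R}_i$ split into (i) $\int\langle\nabla u,\nabla\phi^{2m}\rangle Q^{m-1}$, which by co-area and Yau's bound is $O(A)$, and (ii) the Hessian pieces $\int u^{-1}\mathrm{Re}(u_{\bar\alpha\beta}u_\alpha(\phi^{2m})_{\bar\beta})Q^{m-2}$ and the $u_{\alpha\beta}$ analogue. For the latter, the derivative $(\phi^{2m})_{\bar\beta} = \frac{4m-1}{2m}\chi^{4m-2}\chi_{\bar\beta}$ is of size $(-\ln\delta)^{-1}\chi^{4m-2}u^{-1}|\nabla u|$; lacking a pointwise bound on the complex Hessian, one must apply Cauchy--Schwarz paired with the integrated Hessian estimate (\ref{a21}), distributing $(-\ln\delta)^{-1}$ as a square root so that one factor $(-\ln\delta)^{-1/2}$ is absorbed into the Hessian integral. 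This yields the final bound $A(-\ln\delta)^{1/2}$, and dividing by $(-\ln\delta)$ produces the stated error. The second inequality on $L(T,\beta T)$ is obtained by the symmetric argument, replacing $\nabla\chi=(-\ln\delta)^{-1}\nabla\ln u$ by $\nabla\chi=-(\ln\beta)^{-1}\nabla\ln u$.
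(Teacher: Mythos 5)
The key observation in your proposal — the exact co-area evaluation $\int_{L(\delta\varepsilon,\varepsilon)}uQ\chi^{4m-1}=\frac{A(-\ln\delta)}{4m}$ and the Cauchy--Schwarz pairing of the Hessian boundary term with an integrated Hessian bound — are both correct and match the paper's mechanism. However, there is a genuine gap in your choice of cut-off: you feed the \emph{full} $\chi$ of (\ref{hi}) (supported on $L(\delta\varepsilon,\beta T)$) into Proposition \ref{B} and then propose to ``isolate'' the $L(\delta\varepsilon,\varepsilon)$ contribution afterwards. This does not work, for two compounding reasons.

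First, Proposition \ref{B} with one test function produces exactly \emph{one} two-sided inequality, namely
\begin{equation*}
\mathcal{R}_2\leq \int_M uQ^m\phi^{2m}-(4m^2)^{m-1}\int_M uQ\phi^{2m}\leq \mathcal{R}_1,
\end{equation*}
and the left-hand side is the \emph{sum} of the contributions from $L(\delta\varepsilon,\varepsilon)$, $L(\varepsilon,T)$, and $L(T,\beta T)$. The middle piece is indeed $O(A)$ by Yau's bound, but there is no a priori control on the individual signs or sizes of the $L(\delta\varepsilon,\varepsilon)$ and $L(T,\beta T)$ pieces — they could each be large and cancel. ``Treating the symmetric $L(T,\beta T)$ piece by the same argument'' is circular: you would need a second, independent inequality localized near $L(T,\beta T)$, which the full cut-off does not provide. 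Second, even the quantitative bound degrades: with the full cut-off, the integrated Hessian bound analogous to (\ref{w4}) picks up $\int_{L(\delta\varepsilon,\beta T)}u^{-1}|\nabla u|^2 = A\ln(\beta T/(\delta\varepsilon))$, so Cauchy--Schwarz only yields $|\mathcal{R}_i|\leq c(m)A\bigl((-\ln\delta)+\ln\beta+\ln(T/\varepsilon)\bigr)^{1/2}$, and after dividing by $(-\ln\delta)$ you are left with a $\beta$-dependent error, not the claimed $c(m)A(-\ln\delta)^{-1/2}$.

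The paper's fix is to use a \emph{localized} cut-off $\widetilde{\chi}$ (equal to $\chi$ on $L(\delta\varepsilon,\varepsilon)$, ramping down to $0$ on $L(\varepsilon,2\varepsilon)$, and identically zero elsewhere) so that $\varphi=\widetilde{\chi}^{(4m-1)/(2m)}\psi^2$ is supported only on $L(\delta\varepsilon,2\varepsilon)\subset E$. Then Proposition \ref{B} applied to $\varphi$ gives a self-contained estimate for the $L(\delta\varepsilon,\varepsilon)$ region alone: the Hessian integral is bounded by $c(m)A(-\ln\delta)$ exactly because the support is localized, and the $L(\varepsilon,2\varepsilon)$ overlap contributes only $O(A)$, which one simply subtracts. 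A symmetric localized cut-off near $L(T,\beta T)$ then gives the second inequality independently. Without this localization, the two estimates of the lemma cannot be decoupled.
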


\begin{proof}
By Proposition \ref{B} we have%
\begin{equation}
\left( 4m^{2}\right) ^{m-1}\int_{M}uQ\varphi ^{2m}+\mathcal{R}_{2}\leq
\int_{M}uQ^{m}\varphi ^{2m}\leq \left( 4m^{2}\right) ^{m-1}\int_{M}uQ\varphi
^{2m}+\mathcal{R}_{1}  \label{z1}
\end{equation}%
for any non-negative cut-off $\varphi $ on $M$ so that $\varphi +\left\vert
\nabla \varphi \right\vert \leq c\left( m\right) $. 

We choose $\varphi =\widetilde{\chi }^{\frac{4m-1}{2m}}\psi ^{2}$, where 
\begin{equation*}
\widetilde{\chi }\left( u\right) =\left\{ 
\begin{array}{c}
\left( -\ln \delta \right) ^{-1}\left( \ln u-\ln \left( \delta \varepsilon
\right) \right) \text{ } \\ 
\left( \ln 2\right) ^{-1}\left( \ln \left( 2\varepsilon \right) -\ln u\right)
\\ 
0%
\end{array}%
\right. 
\begin{array}{l}
\text{on }L\left( \delta \varepsilon ,\varepsilon \right) \\ 
\text{on \ }L\left( \varepsilon ,2\varepsilon \right) \\ 
\text{otherwise}%
\end{array}%
\end{equation*}
and $\psi$ is  as in (\ref{y4}).

Hence, we are applying Proposition \ref{B} to a cut-off function $\varphi $
that is supported on the end $E$, and satisfies $\varphi ^{2m}=\chi
^{4m-1}\psi ^{4m}$ on $L\left( \delta \varepsilon ,\varepsilon \right) .$ We
now want to bound $\mathcal{R}_{1}$ and $\mathcal{R}_{2}$.

We have 
\begin{eqnarray}
&&\int_{M}\left\vert \left\langle \nabla \varphi ^{2m},\nabla u\right\rangle
\right\vert Q^{m-1}  \label{w1} \\
&=&\int_{M}\left\vert \left\langle \nabla \left( \widetilde{\chi }%
^{4m-1}\psi ^{4m}\right) ,\nabla u\right\rangle \right\vert Q^{m-1}  \notag
\\
&\leq &\left( 4m-1\right) \int_{M}\left\vert \left\langle \nabla \widetilde{%
\chi },\nabla u\right\rangle \right\vert \widetilde{\chi }^{4m-2}\psi
^{4m}Q^{m-1}  \notag \\
&&+4m\int_{M}\left\vert \left\langle \nabla \psi ,\nabla u\right\rangle
\right\vert \widetilde{\chi }^{4m-1}\psi ^{4m-1}Q^{m-1}.  \notag
\end{eqnarray}%
On one hand, by (\ref{e}) we have 
\begin{eqnarray*}
\int_{M}\left\vert \left\langle \nabla \psi ,\nabla u\right\rangle
\right\vert \widetilde{\chi }^{4m-1}\psi ^{4m-1}Q^{m-1} &\leq &c\left(
m\right) \int_{\left( E\backslash B\left( x_{0},R\right) \right) \cap
L\left( \delta \varepsilon ,2\varepsilon \right) }u \\
&\leq &\frac{c\left( m\right) }{\delta \varepsilon }\int_{E\backslash
B\left( x_{0},R\right) }u^{2} \\
&\leq &\frac{C}{\delta \varepsilon }e^{-2\sqrt{\lambda _{1}\left( M\right) }%
R}.
\end{eqnarray*}%
On the other hand, by Yau's gradient estimate and (\ref{c}) we have 
\begin{eqnarray*}
\int_{M}\left\vert \left\langle \nabla \widetilde{\chi },\nabla
u\right\rangle \right\vert \widetilde{\chi }^{4m-2}\psi ^{4m}Q^{m-1} &\leq
&c\left( m\right) \int_{M}\left\vert \left\langle \nabla \widetilde{\chi }%
,\nabla u\right\rangle \right\vert \\
&=&c\left( m\right) \frac{1}{\left( -\ln \delta \right) }\int_{L\left(
\delta \varepsilon ,\varepsilon \right) }u^{-1}\left\vert \nabla
u\right\vert ^{2} \\
&&+c\left( m\right) \frac{1}{\ln 2}\int_{L\left( \varepsilon ,2\varepsilon
\right) }u^{-1}\left\vert \nabla u\right\vert ^{2} \\
&\leq &c\left( m\right) A.
\end{eqnarray*}%
Hence, by (\ref{w1}) we get as $R\rightarrow \infty ,$ 
\begin{equation}
\int_{M}\left\vert \left\langle \nabla \varphi ^{2m},\nabla u\right\rangle
\right\vert Q^{m-1}\leq c\left( m\right) A.  \label{w2}
\end{equation}%
A similar proof shows that 
\begin{equation}
\int_{M}u\left\vert \nabla \varphi \right\vert ^{2}\leq c\left( m\right) A.
\label{w3}
\end{equation}%
Following the proof of (\ref{a20'}) we obtain similarly from (\ref{a9}) for $%
k=m-2$ that%
\begin{eqnarray}
&&\int_{M}u^{-1}\left\vert u_{\alpha \bar{\beta}}\right\vert
^{2}Q^{m-2}\varphi ^{2m}+\int_{M}u^{-1}\left\vert u_{\alpha \beta
}\right\vert ^{2}Q^{m-2}\varphi ^{2m}  \label{w4} \\
&\leq &c\left( m\right) \int_{L\left( \delta \varepsilon ,2\varepsilon
\right) }u^{-1}\left\vert \nabla u\right\vert ^{2}+c\left( m\right)
\int_{M}u\left\vert \nabla \varphi \right\vert ^{2}  \notag \\
&\leq &c\left( m\right) A\left( -\ln \delta \right) .  \notag
\end{eqnarray}%
By the Cauchy-Schwarz inequality we get as $R\rightarrow \infty $ 
\begin{eqnarray*}
&&\int_{M}u^{-1}\left\vert \mathrm{Re}\left( u_{\bar{\alpha}\beta }u_{\alpha
}\left( \varphi ^{2m}\right) _{\bar{\beta}}\right) \right\vert Q^{m-2} \\
&\leq &\frac{m}{2}\int_{M}u^{-1}\left\vert u_{\bar{\alpha}\beta }\right\vert
\left\vert \nabla u\right\vert \left\vert \nabla \varphi \right\vert \varphi
^{2m-1}Q^{m-2} \\
&\leq &\frac{m}{2}\left( \int_{M}u^{-1}\left\vert u_{\alpha \bar{\beta}%
}\right\vert ^{2}Q^{m-2}\varphi ^{2m}\right) ^{\frac{1}{2}}\left(
\int_{M}u\left\vert \nabla \varphi \right\vert ^{2}Q^{m-1}\varphi
^{2m-2}\right) ^{\frac{1}{2}} \\
&\leq &c\left( m\right) \left( -\ln \delta \right) ^{\frac{1}{2}}A,
\end{eqnarray*}%
where in the last line we used (\ref{w3}) and (\ref{w4}). The estimate 
\begin{equation*}
\int_{M}u^{-1}\left\vert \mathrm{Re}\left( u_{\alpha \beta }u_{\bar{\alpha}%
}\left( \varphi ^{2m}\right) _{\bar{\beta}}\right) \right\vert Q^{m-2}\leq
c\left( m\right) \left( -\ln \delta \right) ^{\frac{1}{2}}A
\end{equation*}%
follows similarly. This proves that 
\begin{equation}
\left\vert \mathcal{R}_{i}\right\vert \leq c\left( m\right) A\left( -\ln
\delta \right) ^{\frac{1}{2}}.  \label{z2}
\end{equation}%
Making $R\rightarrow \infty $ in (\ref{z1}) and using (\ref{z2}) we get%
\begin{equation*}
\left\vert \int_{M}uQ^{m}\widetilde{\chi }^{4m-1}-\left( 4m^{2}\right)
^{m-1}\int_{M}uQ\widetilde{\chi }^{4m-1}\right\vert \leq c\left( m\right)
A\left( -\ln \delta \right) ^{\frac{1}{2}}.
\end{equation*}%
Since 
\begin{eqnarray*}
\int_{L\left( \varepsilon ,2\varepsilon \right) }uQ^{m}\widetilde{\chi }%
^{4m-1} &\leq &c\left( m\right) \int_{L\left( \varepsilon ,2\varepsilon
\right) }uQ \\
&=&c\left( m\right) A,
\end{eqnarray*}%
we conclude from above that 
\begin{equation}
\ \left\vert \int_{M}uQ^{m}\chi ^{4m-1}-\left( 4m^{2}\right)
^{m-1}\int_{M}uQ\chi ^{4m-1}\right\vert \leq c\left( m\right) A\left( -\ln
\delta \right) ^{\frac{1}{2}}.  \label{z3}
\end{equation}%
Note that by (\ref{c}) 
\begin{eqnarray}
\int_{M}uQ\chi ^{4m-1} &=&A\int_{\delta \varepsilon }^{\varepsilon }\frac{1}{%
t}\chi ^{4m-1}\left( t\right) dt \label{IntQ}\\
&=&\frac{1}{4m}\left( -\ln \delta \right) A.\notag
\end{eqnarray}
Hence, we conclude from (\ref{z3}) that 
\begin{equation*}
\left\vert \int_{L\left( \delta \varepsilon ,\varepsilon \right) }uQ^{m}\chi
^{4m-1}-\frac{A}{4m}\left( 4m^{2}\right) ^{m-1}\left( -\ln \delta \right)
\right\vert \leq c\left( m\right) A\left( -\ln \delta \right) ^{\frac{1}{2}}.
\end{equation*}%
This proves the first estimate of the lemma. The corresponding estimate on $%
L\left( T,\beta T\right) $ follows similarly, the only difference being that
we use (\ref{v}) to get 
\begin{eqnarray*}
\int_{L\left( T,\beta T\right) }\left\vert \left\langle \nabla \psi ,\nabla
u\right\rangle \right\vert &\leq &\frac{c\left( m\right) }{R}\int_{L\left(
T,\beta T\right) \cap \left( F\backslash B\left( x_{0},R\right) \right) }u \\
&\leq &c\left( m\right) \beta T\mathrm{Vol}\left( F\backslash B\left(
x_{0},R\right) \right) \\
&\leq &c\left( m\right) \beta Te^{-2\sqrt{\lambda _{1}\left( M\right) }R}.
\end{eqnarray*}%
Certainly, the right side converges to zero when $R\rightarrow \infty $ and $%
\beta ,T$ are fixed. This proves the lemma.
\end{proof}

The next result is similar to Lemma \ref{0}.

\begin{lemma}
\label{0'}Under the assumptions of Theorem \ref{R}, for $\chi $ given in (%
\ref{hi}) we have%
\begin{equation*}
\left\vert \left( -\ln \delta \right) ^{-1}\int_{L\left( \delta \varepsilon
,\varepsilon \right) }u^{-2}u_{\bar{\alpha}\beta }u_{\alpha }u_{\bar{\beta}%
}Q^{m-2}\chi ^{4m-1}-\frac{\left( m-1\right) A}{16}\left( 4m^{2}\right)
^{m-2}\right\vert \leq c\left( m\right) A\left( -\ln \delta \right) ^{-\frac{%
1}{2}}
\end{equation*}%
and 
\begin{equation*}
\left\vert \left( \ln \beta \right) ^{-1}\int_{L\left( T,\beta T\right)
}u^{-2}u_{\bar{\alpha}\beta }u_{\alpha }u_{\bar{\beta}}Q^{m-2}\chi ^{4m-1}-%
\frac{\left( m-1\right) A}{16}\left( 4m^{2}\right) ^{m-2}\right\vert \leq
c\left( m\right) A\left( \ln \beta \right) ^{-\frac{1}{2}},
\end{equation*}%
where $c\left( m\right) $ is a constant depending only on $m$.
\end{lemma}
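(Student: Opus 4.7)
The plan is to mimic the proof of Lemma \ref{0}, replacing the use of inequality (\ref{I}) from Proposition \ref{B} with inequality (\ref{II}). With the same test function
\begin{equation*}
\varphi =\widetilde{\chi }^{\frac{4m-1}{2m}}\psi ^{2}
\end{equation*}
from the proof of Lemma \ref{0}, where $\widetilde{\chi}$ is chosen on $L(\delta\varepsilon,2\varepsilon)$ so that $\widetilde{\chi}=\chi$ on $L(\delta\varepsilon,\varepsilon)$ and smoothly cuts off on $L(\varepsilon,2\varepsilon)$, apply (\ref{II}). This gives
\begin{equation*}
\Bigl| \int_{M} u^{-2}u_{\bar{\alpha}\beta }u_{\alpha}u_{\bar{\beta}}Q^{m-2}\varphi^{2m} - \left(4m^{2}\right)^{m-2}\frac{m(m-1)}{4}\int_{M}uQ\varphi^{2m} \Bigr| \leq |\mathcal{R}_{3}|+|\mathcal{R}_{4}|.
\end{equation*}

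First I would send $R\to\infty$. The spatial cut-off contributions in $\mathcal{R}_3$ and $\mathcal{R}_4$ vanish using the decay estimates (\ref{e}) on $E$ and (\ref{v}) on $F$, exactly as in (\ref{w1}) of Lemma \ref{0}. The remaining boundary terms in $\mathcal{R}_i$ all involve $\widetilde{\chi}$ and its derivative, and can be bounded by $c(m)A(-\ln\delta)^{1/2}$ by reusing estimates (\ref{w2}), (\ref{w3}) and (\ref{w4}), together with Cauchy–Schwarz applied to the complex Hessian terms. Indeed, (\ref{w4}) gives
$$\int_{M}u^{-1}|u_{\alpha\bar{\beta}}|^{2}Q^{m-2}\varphi^{2m}+\int_{M}u^{-1}|u_{\alpha\beta}|^{2}Q^{m-2}\varphi^{2m}\leq c(m)A(-\ln\delta),$$
so the Hessian-type boundary integrals in $\mathcal{R}_3,\mathcal{R}_4$ are bounded by $c(m)A(-\ln\delta)^{1/2}$ after Cauchy–Schwarz, mirroring the passage from (\ref{w4}) to (\ref{z2}).

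Next, $\chi$ and $\widetilde\chi$ agree on $L(\delta\varepsilon,\varepsilon)$, so
\begin{equation*}
\int_{L(\delta\varepsilon,\varepsilon)}u^{-2}u_{\bar{\alpha}\beta}u_{\alpha}u_{\bar{\beta}}Q^{m-2}\chi^{4m-1}=\int_{M}u^{-2}u_{\bar{\alpha}\beta}u_{\alpha}u_{\bar{\beta}}Q^{m-2}\varphi^{2m}-I_{\mathrm{cap}},
\end{equation*}
where $I_{\mathrm{cap}}$ is the contribution from the transition region $L(\varepsilon,2\varepsilon)$. I would bound $|I_{\mathrm{cap}}|\leq c(m)A$ via $|u_{\bar\alpha\beta}u_\alpha u_{\bar\beta}|\leq \tfrac14|u_{\bar\alpha\beta}||\nabla u|^{2}$, Yau's estimate $Q\leq c(m)$, and the co-area identity $\int_{L(\varepsilon,2\varepsilon)}u^{-1}|\nabla u|^{2}=A\ln 2$, combined with a Bochner bound (apply (\ref{a9}) with a cut-off supported on $L(\varepsilon/2,4\varepsilon)$ to control $\int u^{-1}|u_{\bar\alpha\beta}|^{2}Q^{m-2}$ by $c(m)A$ on that region, independent of $\delta$). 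Similarly $\int_{L(\delta\varepsilon,\varepsilon)}uQ\chi^{4m-1}$ differs from $\int uQ\varphi^{2m}$ by at most $c(m)A$, while the main term comes from the co-area computation already carried out in (\ref{IntQ}), giving
$$\int_{L(\delta\varepsilon,\varepsilon)}uQ\chi^{4m-1}=\frac{-\ln\delta}{4m}A.$$
Multiplying by $(4m^{2})^{m-2}\frac{m(m-1)}{4}$ and dividing by $-\ln\delta$ produces the asserted main constant $\frac{(m-1)A}{16}(4m^{2})^{m-2}$.

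The estimate on $L(T,\beta T)$ is obtained by the same argument with $\beta$ in place of $\delta$, using (\ref{v}) to handle the spatial cut-off and redefining $\widetilde\chi$ to transition on $L(T/2,T)$. The main obstacle I anticipate is the third step: carefully bounding the transition-region integral $I_{\mathrm{cap}}$ (and its analogue on $L(T/2,T)$) by $c(m)A$, uniformly in $\delta$ and $\beta$, since the complex Hessian terms do not obey a pointwise bound independent of these parameters. This requires localizing the Bochner identity (\ref{a9}) to a region where $u$ is comparable to a fixed constant and invoking the co-area formula to convert level-set integrals into the universal constant $A$.
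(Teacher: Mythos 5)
Your proposal is correct and follows essentially the same route as the paper: apply inequality (\ref{II}) from Proposition \ref{B} with the cut-off $\varphi=\widetilde{\chi}^{(4m-1)/(2m)}\psi^{2}$ from Lemma \ref{0}, bound $\mathcal{R}_{3},\mathcal{R}_{4}$ exactly as in (\ref{z2}), and use (\ref{IntQ}) to extract the constant $\tfrac{(m-1)A}{16}(4m^{2})^{m-2}$. Your extra care about the transition-region contribution $I_{\mathrm{cap}}$, which for Lemma \ref{0} could be dispatched pointwise via $Q^{m}\leq c(m)Q$ but here needs a localized application of (\ref{a9})/(\ref{a20'}) on a level band $L(\varepsilon/2,4\varepsilon)$ together with the co-area formula to give $c(m)A$ independent of $\delta$, is a legitimate step that the paper's terse proof leaves implicit, and your analysis of it is sound.
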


\begin{proof}
By Proposition \ref{B} we have 
\begin{eqnarray*}
\int_{M}u^{-2}u_{\bar{\alpha}\beta }u_{\alpha }u_{\bar{\beta}}Q^{m-2}\varphi
^{2m} &\leq &\left( 4m^{2}\right) ^{m-2}\frac{m\left( m-1\right) }{4}%
\int_{M}uQ\varphi ^{2m}+\mathcal{R}_{3} \\
\int_{M}u^{-2}u_{\bar{\alpha}\beta }u_{\alpha }u_{\bar{\beta}}Q^{m-2}\varphi
^{2m} &\geq &\left( 4m^{2}\right) ^{m-2}\frac{m\left( m-1\right) }{4}%
\int_{M}uQ\varphi ^{2m}+\mathcal{R}_{4}
\end{eqnarray*}%
for any cut-off $\varphi $ on $M$. We choose the cut-off $\varphi =%
\widetilde{\chi }^{\frac{4m-1}{2m}}\psi ^{2}$ as in Lemma \ref{0}. From (\ref%
{z2}) we know that 
\begin{equation*}
\left\vert \mathcal{R}_{i}\right\vert \leq c\left( m\right) A\left( -\ln
\delta \right) ^{\frac{1}{2}}.
\end{equation*}
By (\ref{IntQ}), this proves the estimate on  $L\left( \delta \varepsilon
,\varepsilon \right) $. The proof of the estimate on $L\left( T,\beta
T\right) $ is similar.
\end{proof}

We use Lemma \ref{0} and Lemma \ref{0'} to estimate each term in (\ref{y3})
for the cut-off $\phi $ specified in (\ref{y4}).

\begin{lemma}
\label{1}Under the assumptions of Theorem \ref{R}, for $\phi $ given in (\ref%
{y4}), we have as $R\rightarrow \infty ,$ 
\begin{equation*}
\int_{M}u\left\vert \nabla \phi \right\vert ^{2}\leq c\left( m\right)
A\left( -\ln \delta \right) ^{-1}+c\left( m\right) A\left( \ln \beta \right)
^{-1},
\end{equation*}%
for a constant $c\left( m\right) $ depending only on $m$.
\end{lemma}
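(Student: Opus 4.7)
My plan is to split $\phi=(\chi\psi)^2$ and use
\[
|\nabla\phi|^{2}\;=\;4\chi^{2}\psi^{2}\,|\psi\nabla\chi+\chi\nabla\psi|^{2}\;\le\;8\chi^{2}\psi^{4}|\nabla\chi|^{2}+8\chi^{4}\psi^{2}|\nabla\psi|^{2},
\]
so that $\int_{M}u|\nabla\phi|^{2}$ is controlled by two pieces: a $\psi$-derivative part supported in $B(x_{0},2R)\setminus B(x_{0},R)$, and a $\chi$-derivative part supported in $L(\delta\varepsilon,\varepsilon)\cup L(T,\beta T)$. I expect the first part to vanish in the limit $R\to\infty$ using the exponential decay of $u$ on the infinite-volume end $E$ and of the volume on the finite-volume end $F$, and the second part to produce exactly the stated bound via the co-area formula.

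For the $\psi$-part, since $|\nabla\psi|\le c/R$ and $\chi\le 1$, on $E$ I would use (\ref{e}) together with $u\le C$ on the infinite volume end to get
\[
\int_{E\cap(B(x_{0},2R)\setminus B(x_{0},R))}u\,|\nabla\psi|^{2}\;\le\;\frac{c}{R^{2}}\int_{E\setminus B(x_{0},R)}u\;\le\;\frac{c}{R^{2}}\!\left(\int_{E\setminus B(x_{0},R)}u^{2}\right)^{\!1/2}\!\mathrm{Vol}(E\cap B(x_{0},2R))^{1/2},
\]
and on $F$ I would use that $u\le \beta T$ on the support of $\chi$ together with (\ref{v}) to get $\int_{F\cap(B(x_{0},2R)\setminus B(x_{0},R))}u|\nabla\psi|^{2}\le c(\beta T) R^{-2}e^{-2\sqrt{\lambda_{1}(M)}R}$. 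For fixed $\delta,\varepsilon,T,\beta$, both contributions go to $0$ as $R\to\infty$, so this part is negligible in the limit.

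For the $\chi$-part, in the limit $R\to\infty$ we may replace $\psi$ by $1$ and estimate $\int_{M}u\chi^{2}|\nabla\chi|^{2}$. On $L(\delta\varepsilon,\varepsilon)$ we have $\nabla\chi=(-\ln\delta)^{-1}u^{-1}\nabla u$, hence $\chi^{2}|\nabla\chi|^{2}=(-\ln\delta)^{-2}u^{-2}|\nabla u|^{2}\chi^{2}$, and applying the co-area identity (\ref{c}) with $H(t)=\chi^{2}(t)/t$ gives
\[
\int_{L(\delta\varepsilon,\varepsilon)}u\chi^{2}|\nabla\chi|^{2}\;=\;A(-\ln\delta)^{-2}\!\int_{\delta\varepsilon}^{\varepsilon}\!\frac{\chi^{2}(t)}{t}\,dt\;=\;A(-\ln\delta)^{-2}\!\int_{0}^{-\ln\delta}\!\frac{s^{2}}{(-\ln\delta)^{2}}\,ds\;=\;\tfrac{1}{3}A(-\ln\delta)^{-1},
\]
after the substitution $s=\ln t-\ln(\delta\varepsilon)$. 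The same computation on $L(T,\beta T)$ produces $\tfrac{1}{3}A(\ln\beta)^{-1}$. Summing these and absorbing absolute constants in $c(m)$ gives the claimed inequality.

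The only delicate point is making sure the two parts of the split are separately controlled in the right way — the $\psi$-contribution must be shown to be $o(1)$ as $R\to\infty$ uniformly in $\delta,\beta$ only after $\delta,\beta$ are fixed, while the $\chi$-contribution is exact up to the factor $1/3$. No serious obstacle is anticipated; everything reduces to the co-area formula and the decay estimates (\ref{e})–(\ref{v}).
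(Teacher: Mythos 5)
Your overall decomposition $|\nabla\phi|^{2}\le 8\chi^{2}\psi^{4}|\nabla\chi|^{2}+8\chi^{4}\psi^{2}|\nabla\psi|^{2}$, the co-area computation of the $\chi$-term, and the treatment of the $\psi$-term on the finite-volume end $F$ all match the paper's argument. The $\chi$-term computation is in fact slightly sharper than the paper's (you keep $\chi^{2}$ and get the exact constant $1/3$; the paper just drops $\chi^{2}\le 1$), and both give $cA(-\ln\delta)^{-1}+cA(\ln\beta)^{-1}$.

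There is, however, a genuine gap in your treatment of the $\psi$-term on the infinite-volume end $E$. After dropping $\chi$, you bound
\[
\frac{c}{R^{2}}\int_{E\setminus B(x_{0},R)}u\;\le\;\frac{c}{R^{2}}\Bigl(\int_{E\setminus B(x_{0},R)}u^{2}\Bigr)^{1/2}\mathrm{Vol}\bigl(E\cap B(x_{0},2R)\bigr)^{1/2}
\]
and declare this $o(1)$, but you never control the volume factor. Under $\mathrm{Ric}\ge -2(m+1)$, Bishop--Gromov only gives $\mathrm{Vol}(B(x_{0},2R))\le Ce^{\sqrt{2(m+1)(2m-1)}\,R}$, while (\ref{e}) gives decay $e^{-2\sqrt{\lambda_{1}(M)}\,R}$ with $\lambda_{1}(M)\le m^{2}$, i.e.\ $2\sqrt{\lambda_{1}(M)}\le 2m<\sqrt{2(m+1)(2m-1)}$ for every $m\ge 1$. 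So the product does not go to zero, and the polynomial factor $R^{-2}$ cannot save it. The hypothesis ``$u\le C$ on $E$'' also does not help: it gives $u^{2}\le Cu$, which runs in the wrong direction. The fix is the one the paper uses: do not discard the $\chi^{4}$ weight. On $\mathrm{supp}(\chi)\cap E$ one has $u\ge\delta\varepsilon$, hence $u\le u^{2}/(\delta\varepsilon)$, and then
\[
\int_{E\cap(B(x_{0},2R)\setminus B(x_{0},R))}u\,\chi^{4}\psi^{2}|\nabla\psi|^{2}\;\le\;\frac{c}{\delta\varepsilon}\int_{E\setminus B(x_{0},R)}u^{2}\;\le\;\frac{C}{\delta\varepsilon}\,e^{-2\sqrt{\lambda_{1}(M)}R}\longrightarrow 0
\]
as $R\to\infty$ with $\delta,\varepsilon$ fixed, which is exactly what you need. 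With this replacement your proof matches the paper's.
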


\begin{proof}
We have 
\begin{equation*}
\int_{M}u\left\vert \nabla \phi \right\vert ^{2}\leq 8\int_{M}u\chi ^{4}\psi
^{2}\left\vert \nabla \psi \right\vert ^{2}+8\int_{M}u\psi ^{4}\chi
^{2}\left\vert \nabla \chi \right\vert ^{2}.
\end{equation*}%
Using (\ref{c}) we have 
\begin{eqnarray}
\int_{M}u\psi ^{4}\chi ^{2}\left\vert \nabla \chi \right\vert ^{2} &\leq
&c\left( \ln \beta \right) ^{-2}\int_{L\left( T,\beta T\right)
}u^{-1}\left\vert \nabla u\right\vert ^{2}  \label{y5} \\
&&+c\left( -\ln \delta \right) ^{-2}\int_{L\left( \delta \varepsilon
,\varepsilon \right) }u^{-1}\left\vert \nabla u\right\vert ^{2}  \notag \\
&=&cA\left( -\ln \delta \right) ^{-1}+cA\left( \ln \beta \right) ^{-1}. 
\notag
\end{eqnarray}%
By (\ref{v}) and (\ref{e}) we also have that 
\begin{eqnarray}
\int_{M}u\chi ^{4}\psi ^{2}\left\vert \nabla \psi \right\vert ^{2} &\leq
&c\int_{B\left( x_{0},2R\right) \backslash B\left( x_{0},R\right) }u\chi
\label{y6} \\
&\leq &c\int_{E\backslash B\left( x_{0},R\right) }u\chi +c\int_{F\backslash
B\left( x_{0},R\right) }u\chi  \notag \\
&\leq &\frac{c}{\delta \varepsilon }\int_{E\backslash B\left( x_{0},R\right)
}u^{2}  \notag \\
&&+c\beta T\mathrm{Vol}\left( F\backslash B(x_{0},R\right) )  \notag \\
&\leq &C\left( \frac{1}{\delta \varepsilon }+\beta T\right) e^{-2\sqrt{%
\lambda _{1}\left( M\right) }R}.  \notag
\end{eqnarray}%
The result follows from (\ref{y5}) and (\ref{y6}).
\end{proof}

We continue with the following.

\begin{lemma}
\label{2}Under the assumptions of Theorem \ref{R}, for $\phi $ given by (\ref%
{y4}), we have as $R\rightarrow \infty $ 
\begin{equation*}
\left\vert \int_{M}\phi ^{2m-1}\left\langle \nabla u,\nabla \phi
\right\rangle Q^{m-1}\right\vert \leq c\left( m\right) A\left( -\ln \delta
\right) ^{-\frac{1}{2}}+c\left( m\right) A\left( \ln \beta \right) ^{-\frac{1%
}{2}},
\end{equation*}%
for a constant $c\left( m\right) $ depending only on $m$.
\end{lemma}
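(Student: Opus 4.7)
The strategy is to convert the integrand via the identity $\phi^{2m-1}\nabla\phi = \frac{1}{2m}\nabla\phi^{2m}$, and then use $\phi^{2m} = (\chi\psi)^{4m}$ to split according to which factor is differentiated:
\begin{equation*}
\int_M \phi^{2m-1}\langle\nabla u,\nabla\phi\rangle Q^{m-1} = 2I_\chi + 2I_\psi,
\end{equation*}
where $I_\chi = \int_M \chi^{4m-1}\psi^{4m}\langle\nabla u,\nabla\chi\rangle Q^{m-1}$ and $I_\psi = \int_M \chi^{4m}\psi^{4m-1}\langle\nabla u,\nabla\psi\rangle Q^{m-1}$.

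The term $I_\psi$ is handled exactly as in Lemma \ref{1}: since $\nabla\psi$ is supported in the annulus $B(x_0,2R)\setminus B(x_0,R)$ with $|\nabla\psi|\le c/R$ and Yau's estimate gives $Q\le c(m)$, the exponential decay estimates (\ref{e}) on the infinite-volume end and (\ref{v}) on the finite-volume end yield $|I_\psi|\le C(\delta,\varepsilon,\beta,T,m)\,e^{-2\sqrt{\lambda_1(M)}\,R}\to 0$ as $R\to\infty$.

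For $I_\chi$, the key observation is that $\chi$ is a function of $u$ alone and vanishes identically on $L(\varepsilon,T)$. A direct computation from (\ref{hi}) gives $\nabla\chi = (-\ln\delta)^{-1}u^{-1}\nabla u$ on $L(\delta\varepsilon,\varepsilon)$ and $\nabla\chi = -(\ln\beta)^{-1}u^{-1}\nabla u$ on $L(T,\beta T)$, so
\begin{equation*}
\langle\nabla u,\nabla\chi\rangle = (-\ln\delta)^{-1}uQ \quad \text{and} \quad \langle\nabla u,\nabla\chi\rangle = -(\ln\beta)^{-1}uQ
\end{equation*}
on the two regions respectively. Letting $R\to\infty$ so that $\psi^{4m}\to 1$ on the support of $\chi$, we obtain
\begin{equation*}
I_\chi = (-\ln\delta)^{-1}\!\!\int_{L(\delta\varepsilon,\varepsilon)}\!\! uQ^m\chi^{4m-1} \;-\; (\ln\beta)^{-1}\!\!\int_{L(T,\beta T)}\!\! uQ^m\chi^{4m-1} + o_R(1).
\end{equation*}
Applying Lemma \ref{0} to each of the two integrals shows that both equal the same leading constant $\frac{A}{4m}(4m^2)^{m-1}$, modulo errors of size $c(m)A(-\ln\delta)^{-1/2}$ and $c(m)A(\ln\beta)^{-1/2}$ respectively. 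Because the signs in front of the two pieces of $I_\chi$ are opposite, the leading terms cancel exactly, leaving only the error terms, which is the claim.

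The main (mild) subtlety here is this cancellation: each of the two contributions to $I_\chi$ is individually only $O(A)$, and it is only their \emph{difference} that carries the small factor $(-\ln\delta)^{-1/2}+(\ln\beta)^{-1/2}$. This is precisely the phenomenon that Lemma \ref{0} was designed to quantify, and it is the reason the cut-off $\chi$ is constructed symmetrically on the two transition layers of $u$.
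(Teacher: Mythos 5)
Your proposal is correct and follows essentially the same route as the paper: you split the integral via $\phi^{2m-1}\nabla\phi=\tfrac{1}{2m}\nabla\phi^{2m}$ into a $\nabla\psi$ piece (killed by the exponential decay on the two ends as $R\to\infty$) and a $\nabla\chi$ piece, which after an explicit computation of $\nabla\chi$ reduces to the difference of the two integrals in Lemma \ref{0} whose common leading term $\frac{A}{4m}(4m^2)^{m-1}$ cancels. This is precisely the paper's decomposition (equations (\ref{y8})--(\ref{y10})), and your remark about the cancellation being the mechanism that Lemma \ref{0} was designed to exploit is an accurate reading of the structure.
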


\begin{proof}
We have 
\begin{eqnarray}
&&\int_{M}\phi ^{2m-1}\left\langle \nabla u,\nabla \phi \right\rangle Q^{m-1}
\label{y8} \\
&=&2\int_{M}\phi ^{2m-1}\chi ^{2}\psi \left\langle \nabla u,\nabla \psi
\right\rangle Q^{m-1}  \notag \\
&&+2\int_{M}\phi ^{2m-1}\chi \psi ^{2}\left\langle \nabla u,\nabla \chi
\right\rangle Q^{m-1}.  \notag
\end{eqnarray}%
For the first term, we use Yau's gradient estimate $Q\leq c\left( m\right) $
and (\ref{y6}) to get 
\begin{eqnarray}
&&\int_{M}\phi ^{2m-1}\chi ^{2}\psi \left\vert \left\langle \nabla u,\nabla
\psi \right\rangle \right\vert Q^{m-1}  \label{y9} \\
&\leq &c\left( m\right) \int_{B\left( x_{0},2R\right) \backslash B\left(
x_{0},R\right) }u\chi  \notag \\
&\leq &C\left( \frac{1}{\delta \varepsilon }+\beta T\right) e^{-2\sqrt{%
\lambda _{1}\left( M\right) }R}.  \notag
\end{eqnarray}%
For the second term in (\ref{y8}) note that 
\begin{eqnarray*}
&&\int_{M}\phi ^{2m-1}\chi \psi ^{2}\left\langle \nabla u,\nabla \chi
\right\rangle Q^{m-1} \\
&=&\left( -\ln \delta \right) ^{-1}\int_{L\left( \delta \varepsilon
,\varepsilon \right) }uQ^{m}\chi ^{4m-1}\psi ^{4m} \\
&&-\left( \ln \beta \right) ^{-1}\int_{L\left( T,\beta T\right) }uQ^{m}\chi
^{4m-1}\psi ^{4m}.
\end{eqnarray*}%
By Lemma \ref{0} we have as $R\rightarrow \infty ,$ 
\begin{equation}
\left\vert \int_{M}\phi ^{2m-1}\chi \psi ^{2}\left\langle \nabla u,\nabla
\chi \right\rangle Q^{m-1}\right\vert \leq c\left( m\right) A\left( -\ln
\delta \right) ^{-\frac{1}{2}}+c\left( m\right) A\left( \ln \beta \right) ^{-%
\frac{1}{2}}.  \label{y10}
\end{equation}%
By (\ref{y8}), (\ref{y9}) and (\ref{y10}) we conclude as $R\rightarrow
\infty ,$ 
\begin{equation*}
\left\vert \int_{M}\phi ^{2m-1}\left\langle \nabla u,\nabla \phi
\right\rangle Q^{m-1}\right\vert \leq c\left( m\right) A\left( -\ln \delta
\right) ^{-\frac{1}{2}}+c\left( m\right) A\left( \ln \beta \right) ^{-\frac{1%
}{2}}.
\end{equation*}%
This proves the result.
\end{proof}

We now prove the following.

\begin{lemma}
\label{3}Under the assumptions of Theorem \ref{R}, for $\phi $ given in (\ref%
{y4}), we have as $R\rightarrow \infty $ 
\begin{equation*}
\left\vert \int_{M}u^{-1}\mathrm{Re}\left( u_{\bar{\alpha}\beta }u_{\alpha
}\left( \phi ^{2m}\right) _{\bar{\beta}}\right) Q^{m-2}\right\vert \leq
c\left( m\right) A\left( -\ln \delta \right) ^{-\frac{1}{2}}+c\left(
m\right) A\left( \ln \beta \right) ^{-\frac{1}{2}}
\end{equation*}%
and 
\begin{equation*}
\left\vert \int_{M}u^{-1}\mathrm{Re}\left( u_{\alpha \beta }u_{\bar{\alpha}%
}\left( \phi ^{2m}\right) _{\bar{\beta}}\right) Q^{m-2}\right\vert \leq
c\left( m\right) A\left( -\ln \delta \right) ^{-\frac{1}{2}}+c\left(
m\right) A\left( \ln \beta \right) ^{-\frac{1}{2}},
\end{equation*}%
for a constant $c\left( m\right) $ depending only on $m.$
\end{lemma}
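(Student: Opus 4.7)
The plan is to mimic the structure used in Lemma \ref{2}, exploiting the product structure $\phi = (\chi\psi)^{2}$ so that
$$(\phi^{2m})_{\bar{\beta}} = 4m(\chi\psi)^{4m-1}\bigl(\psi\chi_{\bar{\beta}} + \chi\psi_{\bar{\beta}}\bigr),$$
and to split the integral accordingly into a $\nabla\psi$ part (supported in $B(x_0,2R)\setminus B(x_0,R)$) and a $\nabla\chi$ part (supported in $L(\delta\varepsilon,\varepsilon)\cup L(T,\beta T)$, since $\chi$ is constant on $L(\varepsilon,T)$).

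First I would dispose of the $\nabla\psi$ contribution. By Cauchy--Schwarz,
$$\int_M u^{-1}|u_{\bar{\alpha}\beta}u_\alpha|\,\chi^{4m}\psi^{4m-1}|\nabla\psi|\,Q^{m-2} \le \Bigl(\int_M u^{-1}|u_{\alpha\bar{\beta}}|^2 Q^{m-2}\phi^{2m}\Bigr)^{1/2}\Bigl(\int_M u\chi^{4m-2}\psi^{4m-2}|\nabla\psi|^{2} Q^{m-1}\Bigr)^{1/2}.$$
The first factor is bounded by $c(m)A(|\ln\delta| + \ln\beta)$ by repeating the argument in (\ref{w4}), now using a cut-off supported on both ends. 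The second factor is controlled by the decay estimates (\ref{e}) on $E$ and (\ref{v}) on $F$ together with $|\nabla\psi|\le c/R$, and tends to zero as $R\to\infty$ for fixed $\delta,\varepsilon,T,\beta$. The analogous bound for the $u_{\alpha\beta}u_{\bar{\alpha}}$ term is identical.

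Next I would evaluate the $\nabla\chi$ part. On $L(\delta\varepsilon,\varepsilon)$ we have $\chi_{\bar{\beta}}=(-\ln\delta)^{-1}u^{-1}u_{\bar{\beta}}$, while on $L(T,\beta T)$ we have $\chi_{\bar{\beta}}=-(\ln\beta)^{-1}u^{-1}u_{\bar{\beta}}$. Hence modulo terms that vanish as $R\to\infty$,
$$\int_M u^{-1}\mathrm{Re}\bigl(u_{\bar{\alpha}\beta}u_{\alpha}(\phi^{2m})_{\bar{\beta}}\bigr)Q^{m-2} = \frac{4m}{-\ln\delta}\!\int_{L(\delta\varepsilon,\varepsilon)}\!\! u^{-2}\mathrm{Re}(u_{\bar{\alpha}\beta}u_\alpha u_{\bar{\beta}})Q^{m-2}\chi^{4m-1} - \frac{4m}{\ln\beta}\!\int_{L(T,\beta T)}\!\! u^{-2}\mathrm{Re}(u_{\bar{\alpha}\beta}u_\alpha u_{\bar{\beta}})Q^{m-2}\chi^{4m-1}.$$
Applying Lemma \ref{0'} to each of the two pieces, the leading constants $\tfrac{(m-1)A}{16}(4m^{2})^{m-2}$ match and the opposite signs produce an \emph{exact cancellation} of the leading order, leaving only the error terms, which are $c(m)A(-\ln\delta)^{-1/2}$ and $c(m)A(\ln\beta)^{-1/2}$ respectively. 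This is precisely the phenomenon flagged in the introduction to Section \ref{S3}: each end individually contributes a nonvanishing constant, and only their difference decays.

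For the second inequality, the term $\mathrm{Re}(u_{\alpha\beta}u_{\bar{\alpha}}u_{\bar{\beta}})$ can be reduced to the previous case via the pointwise identity (already derived in the proof of Theorem \ref{Green})
$$\mathrm{Re}(u_{\alpha\beta}u_{\bar{\alpha}}u_{\bar{\beta}}) = -u_{\bar{\alpha}\beta}u_\alpha u_{\bar{\beta}} + \tfrac{1}{16}u^{2}\langle\nabla Q,\nabla u\rangle + \tfrac{1}{8}u^{3}Q^{2},$$
so the leading contribution reduces to the one just handled, while the remainder terms involving $\langle\nabla Q,\nabla u\rangle$ and $Q^{2}$ are absorbed into already-controlled pieces (the $Q^{2}$ piece is treated like the integrand in Lemma \ref{0} after integrating by parts in $Q$). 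The main obstacle in the whole argument is bookkeeping the cancellation: on each end separately every individual integral produces a constant of order $A$ that does \emph{not} decay, and only the careful matching of coefficients provided by Lemma \ref{0'} — combined with the opposite signs of $\nabla\chi$ on the two ends — yields the $(-\ln\delta)^{-1/2} + (\ln\beta)^{-1/2}$ decay claimed.
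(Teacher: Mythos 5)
Your proposal follows essentially the same route as the paper: split $(\phi^{2m})_{\bar\beta}$ into $\nabla\psi$ and $\nabla\chi$ contributions, kill the $\nabla\psi$ part by Cauchy--Schwarz together with the decay estimates (\ref{e}), (\ref{v}) as $R\to\infty$, convert the $\nabla\chi$ part into integrals over $L(\delta\varepsilon,\varepsilon)$ and $L(T,\beta T)$ where $\chi_{\bar\beta}=\pm(\text{log factor})^{-1}u^{-1}u_{\bar\beta}$, and invoke Lemma~\ref{0'} (and, for the $u_{\alpha\beta}u_{\bar\alpha}u_{\bar\beta}$ term, the identity behind (\ref{m5}) together with Lemmas~\ref{0} and~\ref{0'}) to see that the leading $O(A)$ constants from the two ends cancel, leaving only the $(-\ln\delta)^{-1/2}$ and $(\ln\beta)^{-1/2}$ errors. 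The one slip is a small bookkeeping imprecision in your parenthetical for the second inequality --- it is the $\langle\nabla Q,\nabla u\rangle Q^{m-2}$ term, not the $Q^{2}$ term, that requires integrating by parts (to produce the $\langle\nabla u,\nabla(\cdot)\rangle Q^{m-1}$ form already controlled by the estimates in Lemma~\ref{0}'s proof, cf.\ (\ref{w2})), while the $Q^{2}$ term combines directly with $Q^{m-2}$ to give the $Q^{m}$ integrand of Lemma~\ref{0} with no integration by parts needed.
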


\begin{proof}
We have 
\begin{eqnarray}
&&\int_{M}u^{-1}\mathrm{Re}\left( u_{\bar{\alpha}\beta }u_{\alpha }\phi _{%
\bar{\beta}}\right) \phi ^{2m-1}Q^{m-2}  \label{y11} \\
&=&2\int_{M}u^{-1}\left( u_{\bar{\alpha}\beta }u_{\alpha }\chi _{\bar{\beta}%
}\right) \chi \psi ^{2}\phi ^{2m-1}Q^{m-2}  \notag \\
&&+2\int_{M}u^{-1}\left( u_{\bar{\alpha}\beta }u_{\alpha }\psi _{\bar{\beta}%
}\right) \chi ^{2}\psi \phi ^{2m-1}Q^{m-2}.  \notag
\end{eqnarray}%
As in the proof of Lemma \ref{0}, we use (\ref{a20'}), (\ref{e}) and (\ref{v}%
) to estimate%
\begin{eqnarray}
&&\left\vert \int_{M}u^{-1}\left( u_{\bar{\alpha}\beta }u_{\alpha }\psi _{%
\bar{\beta}}\right) \chi ^{2}\psi \phi ^{2m-1}Q^{m-2}\right\vert  \label{y12}
\\
&\leq &\frac{1}{4}\int_{M}u^{-1}\left\vert u_{\bar{\alpha}\beta }\right\vert
\left\vert \nabla u\right\vert \left\vert \nabla \psi \right\vert \chi
^{2}\psi \phi ^{2m-1}Q^{m-2}  \notag \\
&\leq &\frac{1}{4}\left( \int_{M}u^{-1}\left\vert u_{\bar{\alpha}\beta
}\right\vert ^{2}Q^{m-2}\phi ^{2m}\right) ^{\frac{1}{2}}\left(
\int_{M}uQ^{m-1}\left\vert \nabla \psi \right\vert ^{2}\phi ^{2m-2}\chi
^{2}\right) ^{\frac{1}{2}}  \notag \\
&\leq &c\left( \frac{1}{\delta \varepsilon }+\beta T\right) e^{-\sqrt{%
\lambda _{1}\left( M\right) }R}.  \notag
\end{eqnarray}%
Furthermore, we have 
\begin{eqnarray*}
&&\int_{M}u^{-1}\left( u_{\bar{\alpha}\beta }u_{\alpha }\chi _{\bar{\beta}%
}\right) \chi \psi ^{2}\phi ^{2m-1}Q^{m-2} \\
&=&\left( -\ln \delta \right) ^{-1}\int_{L\left( \delta \varepsilon
,\varepsilon \right) }u^{-2}u_{\bar{\alpha}\beta }u_{\alpha }u_{\bar{\beta}%
}Q^{m-2}\chi ^{4m-1}\psi ^{4m} \\
&&-\left( \ln \beta \right) ^{-1}\int_{L\left( T,\beta T\right) }u^{-2}u_{%
\bar{\alpha}\beta }u_{\alpha }u_{\bar{\beta}}Q^{m-2}\chi ^{4m-1}\psi ^{4m}.
\end{eqnarray*}%
Using Lemma \ref{0'} it follows that 
\begin{eqnarray}
&&\left\vert \int_{M}u^{-1}\left( u_{\bar{\alpha}\beta }u_{\alpha }\chi _{%
\bar{\beta}}\right) Q^{m-2}\chi ^{4m-1}\psi ^{4m}\right\vert  \label{y13} \\
&\leq &c\left( m\right) A\left( -\ln \delta \right) ^{-\frac{1}{2}}+c\left(
m\right) A\left( \ln \beta \right) ^{-\frac{1}{2}}.  \notag
\end{eqnarray}%
Plugging (\ref{y13}) and (\ref{y12}) into (\ref{y11}) implies as $%
R\rightarrow \infty $%
\begin{equation*}
\left\vert \int_{M}u^{-1}\mathrm{Re}\left( u_{\bar{\alpha}\beta }u_{\alpha
}\phi _{\bar{\beta}}\right) \phi ^{2m-1}Q^{m-2}\right\vert \leq c\left(
m\right) A\left( -\ln \delta \right) ^{-\frac{1}{2}}+c\left( m\right)
A\left( \ln \beta \right) ^{-\frac{1}{2}}.
\end{equation*}%
This proves the first estimate of the lemma. For the other estimate, we
proceed similarly and get 
\begin{eqnarray*}
&&\left\vert \int_{M}u^{-1}\mathrm{Re}\left( u_{\alpha \beta }u_{\bar{\alpha}%
}\phi _{\bar{\beta}}\right) Q^{m-2}\phi ^{2m-1}\right\vert \\
&\leq &2\left\vert \int_{M}u^{-1}\mathrm{Re}\left( u_{\alpha \beta }u_{\bar{%
\alpha}}\chi _{\bar{\beta}}\right) \chi \psi ^{2}\phi
^{2m-1}Q^{m-2}\right\vert \\
&&+2\left\vert \int_{M}u^{-1}\mathrm{Re}\left( u_{\alpha \beta }u_{\bar{%
\alpha}}\psi _{\bar{\beta}}\right) \chi ^{2}\psi \phi
^{2m-1}Q^{m-2}\right\vert \\
&\leq &2\left\vert \int_{M}u^{-1}\mathrm{Re}\left( u_{\alpha \beta }u_{\bar{%
\alpha}}\chi _{\bar{\beta}}\right) \chi \psi ^{2}\phi
^{2m-1}Q^{m-2}\right\vert \\
&&+C\left( \frac{1}{\delta \varepsilon }+\beta T\right) e^{-\sqrt{\lambda
_{1}\left( M\right) }R},
\end{eqnarray*}%
where the last line follows similarly to (\ref{y12}). Furthermore, we have 
\begin{eqnarray*}
&&\int_{M}u^{-1}\mathrm{Re}\left( u_{\alpha \beta }u_{\bar{\alpha}}\chi _{%
\bar{\beta}}\right) \chi \psi ^{2}\phi ^{2m-1}Q^{m-2} \\
&=&\frac{1}{\left( -\ln \delta \right) }\int_{L\left( \delta \varepsilon
,\varepsilon \right) }u^{-2}\mathrm{Re}\left( u_{\alpha \beta }u_{\bar{\alpha%
}}u_{\bar{\beta}}\right) \chi ^{4m-1}\psi ^{4m}Q^{m-2} \\
&&-\frac{1}{\ln \beta }\int_{L\left( T,\beta T\right) }u^{-2}\mathrm{Re}%
\left( u_{\alpha \beta }u_{\bar{\alpha}}u_{\bar{\beta}}\right) \chi
^{4m-1}\psi ^{4m}Q^{m-2}.
\end{eqnarray*}%
Using a cut-off function $\varphi $ as in Lemma \ref{0}, and by (\ref{m5}),
Lemma \ref{0} and Lemma \ref{0'} it follows that as $R\rightarrow \infty ,$%
\begin{equation*}
\left\vert \int_{M}u^{-1}\mathrm{Re}\left( u_{\alpha \beta }u_{\bar{\alpha}%
}\chi _{\bar{\beta}}\right) \chi \psi ^{2}\phi ^{2m-1}Q^{m-2}\right\vert
\leq c\left( m\right) A\left( -\ln \delta \right) ^{-\frac{1}{2}}+c\left(
m\right) A\left( \ln \beta \right) ^{-\frac{1}{2}}.
\end{equation*}%
We therefore obtain from above that as $R\rightarrow \infty ,$ 
\begin{equation*}
\left\vert \int_{M}u^{-1}\mathrm{Re}\left( u_{\alpha \beta }u_{\bar{\alpha}%
}\left( \phi ^{2m}\right) _{\bar{\beta}}\right) Q^{m-2}\right\vert \leq
c\left( m\right) A\left( -\ln \delta \right) ^{-\frac{1}{2}}+c\left(
m\right) A\left( \ln \beta \right) ^{-\frac{1}{2}}.
\end{equation*}%
This proves the lemma.
\end{proof}

We now finish the proof of Theorem \ref{R}. By (\ref{y3}) and Lemmas \ref{1}%
, \ref{2} and \ref{3}, we get as $R\rightarrow \infty $ 
\begin{equation*}
\left\vert \int_{M}uQ^{m}\phi ^{2m}-\left( 4m^{2}\right)
^{m-1}\int_{M}uQ\phi ^{2m}\right\vert \leq c\left( m\right) A\left( -\ln
\delta \right) ^{-\frac{1}{2}}+c\left( m\right) A\left( \ln \beta \right) ^{-%
\frac{1}{2}},
\end{equation*}%
where $A=\int_{l\left( t\right) }\left\vert \nabla u\right\vert <\infty $.
Making $\delta \rightarrow 0$ and $\beta \rightarrow \infty $ implies that
all inequalities used in proving (\ref{I}) in Proposition \ref{B} must turn
into equalities. Now (\ref{m10}) implies that 
\begin{equation}
\left\vert \nabla \ln u\right\vert =2m\text{,}  \label{k1}
\end{equation}%
and by (\ref{a13}) we have that 
\begin{equation}
u_{\bar{\alpha}\beta }-u^{-1}u_{\bar{\alpha}}u_{\beta }+mg_{\bar{\alpha}%
\beta }u=0.  \label{k2}
\end{equation}%
Moreover, equality in (\ref{m6}) yields 
\begin{eqnarray}
\left\vert u_{\alpha \beta }u_{\bar{\alpha}}u_{\bar{\beta}}\right\vert &=&%
\frac{1}{4}\left\vert u_{\alpha \beta }\right\vert \left\vert \nabla
u\right\vert ^{2}  \label{k3} \\
\left\vert u_{\alpha \beta }\right\vert &=&m\left( m+1\right) u.  \notag
\end{eqnarray}

Note that (\ref{k1}), (\ref{k2}) and (\ref{k3}) are the same as the
identities (12) in the proof of Theorem 4 of \cite{M}. They imply the
splitting as claimed in Theorem \ref{R}. This proves the result.


\begin{thebibliography}{99}
\bibitem{C} S. Y. Cheng, Eigenvalue comparison theorems and its geometric
application, Math. Z. 143 (1975), 289-297.

\bibitem{CM} T. Colding and W. Minicozzi., Harmonic functions on manifolds,
Ann. of Math.(2) 146 (1997), no. 3, 725-747.

\bibitem{K} K. Corlette, Hausdorff dimensions of limit sets, Invent. Math.
102 (1990), no. 3, 521--541.

\bibitem{L} P. Li, Harmonic sections of polynomial growth, Math. Res. Lett
4(1997), 35-44.

\bibitem{LT} P. Li and L.-F. Tam, Harmonic functions and the structure of
complete manifolds, J. Differential Geom. 35 (1992), 359--383.

\bibitem{LW1} P. Li and J. Wang, Complete manifolds with positive spectrum,
J. Differential Geom. 58 (2001), 501--534.

\bibitem{LW2} P. Li and J. Wang, Complete manifolds with positive spectrum
II, J. Differential Geom. 62 (2002), 143--162.

\bibitem{LW3} P. Li and J. Wang, Weighted Poincar\'{e} inequality and
rigidity of complete manifolds, Ann. Sc. Ec. Norm. Sup., 4e serie, t. 39
(2006), 921--982.

\bibitem{LW4} P. Li and J. Wang, Connectedness at infinity of complete K\"{a}%
hler manifolds, Amer. J. Math 131, No. 3 (2009), 771-817.

\bibitem{Liu} G. Liu, K\"{a}hler manifolds with Ricci curvature lower bound,
Asian J. Math. 18(2014), 69-100.

\bibitem{Ma} A. Matei, First eigenvalue for the p-Laplace operator,
Nonlinear Anal., Vol. 39 (2000), 1051-1068.

\bibitem{M} O. Munteanu, A sharp estimate for the bottom of spectrum of the
Laplacian on K\"{a}hler manifolds, J. Differential Geom. 83 (2009), 163-187.

\bibitem{MSW} O. Munteanu, C.J. Sung and J. Wang, Poisson equation on
complete manifolds, Adv. Math. 348 (2019), 81-145. 

\bibitem{SW} C.-J. Sung and J. Wang, Sharp gradient estimate and spectral
rigidity for p-Laplacian, Math. Res. Lett. 21, No 04 (2014), 885--904.

\bibitem{Y} S. T. Yau, Harmonic functions on complete Riemannian manifolds,
Comm. Pure Appl. Math. 28 (1975), 201-228.
\end{thebibliography}
\end{document}